\documentclass[11pt,a4paper]{article}
\usepackage[margin=1in]{geometry}
\usepackage[T1]{fontenc}
\usepackage[utf8]{inputenc}
\usepackage{xcolor}
\usepackage{subcaption}
\usepackage{bibunits}
\usepackage{tikz}
\usetikzlibrary{arrows.meta,calc,patterns}
\usepackage{hyperref}
\usepackage{amsmath,amssymb,amsfonts,amsthm,amscd, bm, mathtools}
\usepackage{mathrsfs}
\usepackage{lmodern}
\usepackage{titlesec}
 \titleformat{\subsection}[runin]
    {\normalfont\large\bfseries}{\thesubsection}{1em}{}
\newtheorem{theorem}{Theorem}[section]
\newtheorem{claim}{Claim}

\newtheorem{lemma}[theorem]{Lemma}
\newtheorem{corollary}[theorem]{Corollary}

\theoremstyle{definition}

\newtheorem{algorithm}[theorem]{Algorithm}
\newtheorem{remark}[theorem]{Remark}
\newtheorem{case}{Case}

\makeatletter
\newcommand\thankssymb[1]{\textsuperscript{\@fnsymbol{#1}}}
\makeatother
\hypersetup{
    colorlinks,
    linkcolor={red!60!black},
    citecolor={green!60!black},
    urlcolor={blue!60!black},
}

\title{Tree-cut decompositions for displaying undominated edge-ends}

\author{
Max Pitz
\and
Lucas Real\thanks{Supported by São Paulo Research Foundation
(FAPESP) through grant number 2025/00669-5.}
}

\begin{document}

\maketitle

\begin{abstract}
    We prove that every graph admits a linked, componental, rooted tree-cut
    decomposition of finite adhesion that displays all undominated edge-ends.

    As a first application, we deduce that this tree-cut decomposition also displays
    the edge-degrees of all undominated edge-ends. For locally finite graphs ---
    where every end is an undominated edge-end --- this yields a linked tree-cut
    decomposition of finite adhesion into \emph{finite} parts that displays all
    ends and their edge-degrees.

    As a second application, this latter tree-cut decomposition yields short,
    unified deductions of Thomassen's theorem on boundary-linked finite partitions,
    and of Bruhn and Stein's characterisation of Eulerian locally finite graphs
    in terms of even ends.
\end{abstract}

\section{Introduction}\label{sec:intro_new}

Tree-cut decompositions, introduced by Wollan \cite{WOLLAN2015}, are the edge-connectivity analogue of the classical tree-decompositions of Robertson and Seymour \cite{GraphMinorsIV}.
For many applications, however, one
wishes the adhesion sets of such a decomposition to faithfully reflect the
connectivity of the underlying graph; for tree-decompositions of finite
graphs, this is captured by Thomas's notion of a \emph{linked} tree-decomposition
\cite{THOMAS199067}. Two extensions of Thomas's theorem have since been pursued:
from finite to infinite graphs with tree-decompositions into finite parts by Albrechtsen, Jacobs, Knappe and Pitz
\cite{LinkedTreeDecompositions}, and from vertex separators to edge separators
(to \emph{linked tree-cut decompositions} for finite graphs) by Giannopoulou,
Kwon, Raymond and Thilikos \cite{GIANNOPOULOU20211}. The goal of this paper
is to combine these two threads, establishing linked tree-cut decompositions
for arbitrary infinite graphs.

\subsection{The main result}\label{subsec:main_new}
All graphs in this paper are multigraphs with parallel edges allowed.
To state our main theorem precisely, a (rooted) \emph{tree-cut decomposition}
$(T,\mathcal{V})$ of a graph $G$ consists of a rooted tree $T$ together with
a family $\mathcal{V} = (V_t)_{t \in T}$ of pairwise disjoint bags
$V_t \subseteq V(G)$ with $\bigsqcup_{t \in T} V_t = V(G)$; empty bags are
allowed. Each edge $e \in E(T)$ separates $T$ into two components $T_1, T_2$
and yields the corresponding \emph{adhesion set}
$F_e := E_G\bigl(\bigcup_{t \in T_1} V_t,\, \bigcup_{t \in T_2} V_t\bigr)
\subseteq E(G)$; the decomposition has \emph{finite adhesion} if every $F_e$
is finite. Writing $\leq$ for the tree-order on $T$ rooted at
$r := \mathrm{root}(T)$, and $C_t := \bigcup_{s \geq t} V_s$ for $t \in T$,
every finite-adhesion tree-cut decomposition induces a natural map
$\varphi \colon \Omega_E(G) \to V(T) \cup \Omega(T)$ (defined precisely in
Section~\ref{sec:prelims}). We say that $(T,\mathcal{V})$:

\begin{itemize}
    \item is \emph{componental} if $G[C_t]$ is non-empty and connected for
        every $t \in T$;
    \item is \emph{linked} if for every finite path $t_0 t_1 \dots t_n$ in $T$
        with $t_0 < t_1 < \dots < t_n$ and $n \geq 1$, there are
        $\min\{|F_{t_{i-1}t_i}| : 1 \leq i \leq n\}$ edge-disjoint paths in
        $G$ from $F_{t_0 t_1}$ to $F_{t_{n-1}t_n}$;
    \item \emph{displays} a set $\Psi \subseteq \Omega_E(G)$ of edge-ends if
        $\varphi^{-1}(\Omega(T)) = \Psi$ and the restriction
        $\varphi|_\Psi \colon \Psi \to \Omega(T)$ is a bijection.
\end{itemize}

\begin{theorem}\label{Main}
    Every graph admits a linked, componental, rooted tree-cut decomposition
    of finite adhesion that displays all undominated edge-ends.
\end{theorem}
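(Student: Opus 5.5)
We prove Theorem~\ref{Main} by transfinite construction of a nested sequence of finite edge-cuts, in the style of the proof for linked tree-decompositions of Albrechtsen, Jacobs, Knappe and Pitz~\cite{LinkedTreeDecompositions}, adapted to edge-separators. We may assume $G$ is connected, since otherwise we treat each component separately and attach the resulting decompositions to a new root with an empty bag. Componentality is still preserved, because we can take one component as the root part.

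The plan is to build $T$ level by level, always working inside a connected set $C$ with finite coboundary $\partial C$. At the root we take $C=V(G)$. Given such a $C$ whose boundary edge-ends we must still separate, we choose a finite set $X\subseteq C$: for the root, a single vertex; in general, the endvertices in $C$ of $\partial C$. We then consider the components $D$ of $G[C]-X$. Each $D$ has $E(D,V\setminus D)$ consisting of edges to $X$ and edges of $\partial C$.
\begin{itemize}
  \item If infinitely many such edges go to one vertex of $X$, that vertex dominates; this is harmless, because the ends it dominates are not required to be displayed and may map to nodes.
  \item To keep adhesion finite, we group components. Components whose coboundary is infinite are absorbed into the bag of the current node.
  \item Components with finite coboundary become children.
\end{itemize}
The key refinement for linkedness is to replace each child set $D$ by a \emph{tight} choice. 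Among all connected sets $D'$ with $D\subseteq D'\subseteq C$ and $\partial D'$ finite that separate the same part, we pick one minimising $|\partial D'|$. Among those minimisers we take $D'$ $\subseteq$-maximal. By Menger's theorem for edges, there are then $|\partial D'|$ edge-disjoint paths from $\partial C$ (or from the parent's adhesion) to $\partial D'$. Concatenating these edge-disjoint linkages along a chain $t_0<\dots<t_n$ yields the required $\min_i |F_{t_{i-1}t_i}|$ edge-disjoint paths. This follows from a standard submodularity argument: an uncrossed minimum cut between consecutive cuts gives linkage through every intermediate cut of size at least the minimum.

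The display property splits into two parts. First, every undominated edge-end $\omega$ must eventually be separated from every vertex. To guarantee this, we enumerate $V(G)$ (or use an exhaustion by finite sets when $G$ is uncountable and locally finite, or else work with the dominating vertices) and ensure each vertex is put into a bag at a finite level. Then the nested finite cuts containing $\omega$ form a ray in $T$, so $\varphi(\omega)\in\Omega(T)$. Undominatedness is what makes this work: if $\omega$ were captured at a node, the vertex sets there would have infinitely many edge-disjoint paths into a finite bag, forcing domination by the pigeonhole principle. Second, every end of $T$ is the image of exactly one edge-end. Distinct undominated edge-ends are separated by a finite cut, which our construction eventually refines, and this gives injectivity. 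Conversely, a ray $t_0t_1\dots$ of $T$ has nested connected sets $C_{t_i}$ with finite coboundaries and empty intersection. Choosing vertices in successive sets and joining them inside the connected $C_{t_i}$ yields a ray defining an edge-end, and that edge-end is undominated because any dominating vertex lies in some bag $V_{t_j}$, outside $C_{t_{j+1}}$, which is separated by a finite cut.

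The main obstacle is reconciling linkedness with finiteness of adhesion in graphs that are not locally finite. The tight-cut choice may fail to exist or may interfere with the requirement that each vertex lands in a bag at a finite level. I would first try to run the construction on the ``undominated part'': contract or absorb the dominating vertices into bags early, and only use tight cuts with respect to edge-ends. For the crossing issues I would invoke the submodularity of $|\partial\cdot|$ to uncross the chosen minimum cuts with previously fixed cuts.
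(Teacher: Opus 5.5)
Your proposal has genuine gaps in each of its load-bearing steps. \textbf{Splitting off the ends.} Deleting the finite vertex set $X$ and absorbing every component of $G[C]-X$ with infinite coboundary into the current bag loses undominated edge-ends. The reason is that infinitely many edges from some $x\in X$ into a component $D$ do \emph{not} make $x$ dominate the ends living in $D$. For instance, let $G$ be a ray $r_0r_1r_2\dots$ together with a vertex $x$ and vertices $d_1,d_2,\dots$, each adjacent to both $x$ and $r_0$. With root vertex $x$, the unique component of $G-x$ has infinite coboundary and is absorbed, so $T$ is a single node. Yet the end of the ray is undominated: every vertex is separated from it by a single edge of the ray. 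The same configuration can sit below any node, where $X$ is forced on you. For the same reason your pigeonhole argument for $\varphi(\omega)\in\Omega(T)$ fails, since bags need not be finite. In the edge setting you cannot split off the undominated ends of a non-locally-finite region by deleting vertices. The paper does it by applying the known unlinked result, Theorem~\ref{DisplayKurkofka}, to the graph $\hat C$ obtained from $C$ by contracting the members of its cutting family. This places every undominated end living in $C$ into a region of finite boundary (Lemma~\ref{AuxiliarySuccessorFamily}).

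\textbf{Linkedness.} Linkages between consecutive adhesion sets do not concatenate along $t_0<\dots<t_n$ once adhesion sizes go up and down. With sizes $3,5,3$, the three paths into the middle cut and the three paths out of it may use different edges of it, and a $2$-edge cut may separate the outer adhesion sets. Such increases are unavoidable in general: two ends that are $5$-edge-connected to each other inside a region of boundary $2$ can only be separated by descendants of boundary at least $5$. Sizes must then drop again wherever further ends hang off by few edges. Moreover, your ``tight choice'' is not well defined. Minimising $|\delta(D')|$ over all connected $D\subseteq D'\subseteq C$ simply returns $D'=C$ at the root, and you never specify the restriction ``separate the same part'' that would prevent this while keeping your Menger claim.

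The paper's mechanism is global. Every child is boundary-linked, and each node carries a cutting family $\Sigma(C_t)$. It is obtained by transfinitely choosing, among regions boundary-linked to a not-yet-captured undominated end and nested with all earlier choices, one of minimum boundary and then $\subseteq$-maximal (Algorithm~\ref{ChoiceCi}, using the uncrossing Lemma~\ref{SubmodularityCuts}). By item~\ref{Algorithm3} of Lemma~\ref{LemmaAlgorithm}, an undominated end living in a region of boundary smaller than $|\delta(C_t)|$ is then captured, inside a single successor, by a region of at most that boundary. Linkedness follows not by concatenation but by a contradiction argument. Take a minimum cut $F$ with $|F|<\kappa$, and the maximal $i$ such that $C_{t_i}$ contains a region of boundary $<\kappa$ in which $\omega_n$ lives; item~\ref{Algorithm3} pushes such a region into $C_{t_{i+1}}$, contradicting maximality.

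Your minimise-then-maximise instinct matches the paper's choice rule. But without the candidate condition and the transfinite iteration it yields neither this property nor the fact, which you leave open, that every vertex lands in a bag. An enumeration cannot achieve finite levels for uncountable $G$. The paper gets this from item~\ref{Kurkofka2} of Lemma~\ref{LemmaKurkofka} and item~\ref{Algorithm2} of Lemma~\ref{LemmaAlgorithm}. A successor sharing a boundary edge with its parent lies in the cutting family and hence has strictly smaller boundary, so no vertex can lie in all $C_{t_n}$ along a ray.
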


Dropping the linkedness requirement, Theorem~\ref{Main} reduces to a
recent result of Kurkofka \cite{Kurkofka2022} (recovered with a shorter proof
and a sharper conclusion by the second author \cite{TreeCutPartition}). It is
the linked property in Theorem~\ref{Main} that powers the applications in
Subsections \ref{subsec:edgedegrees_new}--\ref{subsec:eulerian_new} below; this property
cannot, however, be relaxed to its natural unrooted strengthening. Mirroring
the counterexamples of Albrechtsen, Jacobs, Knappe and Pitz in
\cite{CounterexamplesLinkedTreeDecompositions} for the vertex setting, we
exhibit in Appendix~\ref{section:appendix} a locally finite graph for which
no tree-cut decomposition displaying all its ends is linked in the unrooted sense.
For a related result about displaying the undominated vertex-ends by linked tree-decompositions, see \cite{albrechtsen2025displaying}.

We hope to convince the reader of the usefulness of Theorem~\ref{Main}
by demonstrating in Subsections \ref{subsec:edgedegrees_new}--\ref{subsec:eulerian_new}
below the surprisingly powerful interplay of being linked, componental, and
of finite adhesion through three applications.

\subsection{Displaying edge-degrees}\label{subsec:edgedegrees_new}

As our first application of Theorem~\ref{Main}, the witnessing tree-cut decomposition displays the edge-degrees of all undominated edge-ends, resolving the edge-analogue of a question of Halin from 1977 \cite{halin1977systeme} asking, for a locally finite graph, for a tree-decomposition that simultaneously displays all ends and their combined degrees. Recall that the \emph{edge-degree} of an edge-end $\omega \in \Omega_E(G)$ is defined as
\[
\mathrm{deg}(\omega) = \sup\bigl\{|\mathcal{R}| : \mathcal{R} \subseteq \omega
\text{ is a family of edge-disjoint rays}\bigr\} \in \mathbb{N} \cup \{\infty\}.
\]

\begin{theorem}\label{Edgedegree}
    Every graph $G$ admits a tree-cut decomposition $(T,\mathcal{V})$ of finite
    adhesion that displays all undominated edge-ends together with their
    edge-degrees: for each displayed $\omega$ with corresponding ray
    $R_\omega = t_0 t_1 t_2 \dots$ in $T$ starting at the root,
    $\mathrm{deg}(\omega) = \liminf_n |\delta(C_{t_n})|$.
\end{theorem}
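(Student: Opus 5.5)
We take $(T,\mathcal{V})$ to be the decomposition provided by Theorem~\ref{Main}. It is linked, componental, rooted, of finite adhesion, and displays all undominated edge-ends. Fix a displayed $\omega$ with ray $R_\omega = t_0t_1t_2\dots$ from the root. Since $\varphi(\omega)$ is the end of $T$ containing $R_\omega$, for every $n\ge 1$ the edge-end $\omega$ lives in $G[C_{t_n}]$. Moreover $\delta(C_{t_n}) = F_{t_{n-1}t_n}$ is finite. We prove the two inequalities separately.

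\textbf{Upper bound.} Let $\mathcal{R}\subseteq\omega$ be a finite family of $k$ edge-disjoint rays; we show $k\le \liminf_n|\delta(C_{t_n})|$. Each ray $R\in\mathcal{R}$ has a tail in $G[C_{t_n}]$, because $\delta(C_{t_n})$ is a finite cut and $\omega$ lives on the $C_{t_n}$ side. Each $R$ also has its first vertex outside $C_{t_n}$ once $n$ is large. To see this, note that the sets $C_{t_n}$ are decreasing, and their intersection $\bigcap_n C_{t_n}$ is empty: any vertex lies in some bag $V_s$, and $s\notin R_\omega$ eventually lies off the ray's up-closures. Hence for large $n$ every $R$ must cross $\delta(C_{t_n})$, and edge-disjointness gives $k\le|\delta(C_{t_n})|$ for all large $n$. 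This yields $\deg(\omega)\le\liminf_n|\delta(C_{t_n})|$. The bound holds trivially if $\deg(\omega)=\infty$ and the liminf is $\infty$.

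\textbf{Lower bound.} This is the main obstacle, and it is where linkedness enters. Let $k\le\liminf_n|\delta(C_{t_n})|$ be finite. Then there is $N$ with $|F_{t_{n-1}t_n}|\ge k$ for all $n\ge N$. Applying linkedness to the path $t_{N-1}t_N\dots t_m$, we obtain $k$ edge-disjoint paths from $F_{t_{N-1}t_N}$ to $F_{t_{m-1}t_m}$ for every $m>N$. We must glue these into $k$ edge-disjoint rays. The plan is a compactness argument.

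First choose $m_1<m_2<\dots$. Using linkedness between consecutive cuts $F_{t_{m_{i}-1}t_{m_i}}$ and $F_{t_{m_{i+1}-1}t_{m_{i+1}}}$, we take $k$ edge-disjoint paths $\mathcal{P}_i$. We then want to concatenate $\mathcal{P}_i$ with $\mathcal{P}_{i+1}$ at the shared cut. The difficulty is that the endpoints in the cut need not match. To fix this, we apply linkedness to the longer path $t_{m_i-1}\dots t_{m_{i+2}}$, or we use finiteness of each cut together with a König's-lemma argument. There are only finitely many ways for $k$ edge-disjoint paths from $F_{t_{N-1}t_N}$ to $F_{t_{m-1}t_m}$ to meet each intermediate finite cut and behave inside each finite region between cuts. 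Here we use that the graph between two cuts may be infinite, but we may restrict to the finitely many edges crossing each cut and record only the traces on cuts. A compactness step then yields a coherent choice of $k$ edge-disjoint paths through all the cuts. Componentality ensures that each $G[C_{t_n}]$ is connected, which allows us to reroute within regions if needed.

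The resulting $k$ edge-disjoint rays each cross every $\delta(C_{t_n})$ for $n\ge N$ only finitely often and end up inside all $C_{t_n}$. A ray with tails in every $C_{t_n}$ must belong to $\omega$, since $\varphi$ is injective on displayed ends and the ray's edge-end maps to the end of $R_\omega$. Undominatedness of $\omega$ is used here to exclude that the ray's edge-end is a dominated one mapped to a node. Hence $\deg(\omega)\ge k$. The same argument for every finite $k$ gives $\deg(\omega)=\infty$ when the liminf is infinite.

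The hardest point is making the compactness step rigorous when the regions between consecutive cuts are infinite. If the König's-lemma approach stalls, we would fall back on a greedy argument: fix a sequence of nested linkages, and use Menger in each finite-adhesion slab to repair mismatched endpoints at the cut.
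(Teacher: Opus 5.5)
Your upper bound is correct and is essentially the paper's argument. The lower bound, however, has a genuine gap, and it sits exactly where you locate it: the linkages are never glued into rays. Because you take only $k$ paths between cuts that may have more than $k$ edges, the endpoint sets at each intermediate cut need not match, and the compactness step does not fix this as written. Its stated justification (``finitely many ways \dots\ to behave inside each finite region between cuts'') is false, since the slabs $C_{t_i}\setminus C_{t_{i+1}}$ can be infinite, as you note yourself. Linkage paths may also cross intermediate cuts back and forth. So you would still have to show that a coherent sequence of finite crossing patterns is realised by genuine edge-disjoint paths: realise each slab separately from a linkage with the right trace, and check that each limit walk is a ray crossing every cut only finitely often. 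Something along these lines can be made to work, but none of it is in the proposal, and the ``greedy Menger repair'' fallback is not specified. As it stands, the lower bound, which is the only place linkedness enters, is unproved.

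The missing idea, which removes the need for any compactness, is to choose the cuts so that the linkages \emph{saturate} them. Start at an index $n_0\ge 1$ (as late as you like) with $|\delta(C_{t_{n_0}})|\le|\delta(C_{t_i})|$ for all $i\ge n_0$. Then choose $n_0<n_1<n_2<\cdots$ with $|\delta(C_{t_{n_{j+1}}})|=\min_{i>n_j}|\delta(C_{t_i})|$; this is possible because cut sizes are natural numbers. Every chosen cut is then a minimum of its tail. Linkedness between $\delta(C_{t_{n_j}})$ and $\delta(C_{t_{n_{j+1}}})$ therefore gives $|\delta(C_{t_{n_j}})|$ edge-disjoint paths, that is, one path through \emph{every} edge of $\delta(C_{t_{n_j}})$. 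Trim these so they meet the two cuts only in their end edges, so they live in $\overline{C_{t_{n_j}}}$ and avoid $E(C_{t_{n_{j+1}}})$. Each path of the $j$-th system now ends at an edge of $\delta(C_{t_{n_{j+1}}})$, where a unique path of the $(j+1)$-st system begins. Concatenation is therefore automatic and yields $\min_{i\ge n_0}|\delta(C_{t_i})|$ edge-disjoint rays with tails in every $C_{t_n}$. Letting $n_0\to\infty$ gives $\deg(\omega)\ge\liminf_n|\delta(C_{t_n})|$; this is Lemma~\ref{LinkedDisplay} combined with Theorem~\ref{DisplayEdgeDegrees}.

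Finally, undominatedness plays no role in identifying these rays with $\omega$. A ray with a tail in every $C_{t_n}$ is sent by $\varphi$ to the end of $R_\omega$, and by the definition of displaying, $\omega$ is the only edge-end sent there.
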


For locally finite graphs, every end is an undominated edge-end, and we deduce
the following strengthened version:

\begin{theorem}\label{EdgedegreeLocFin}
    Every locally finite connected graph $G$ admits a linked tree-cut
    decomposition of finite adhesion into finite parts, with the
    decomposition tree itself locally finite, that displays all ends of $G$
    and their edge-degrees.
\end{theorem}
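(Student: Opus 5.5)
We derive Theorem~\ref{EdgedegreeLocFin} from Theorem~\ref{Main} and Theorem~\ref{Edgedegree}. Let $G$ be locally finite and connected, and let $(T,\mathcal{V})$ be the linked, componental, rooted tree-cut decomposition of finite adhesion from Theorem~\ref{Main}, chosen so that it also satisfies the conclusion of Theorem~\ref{Edgedegree}. We take the proof of Theorem~\ref{Edgedegree} to show that every decomposition with the properties of Theorem~\ref{Main} displays edge-degrees as stated there. In a locally finite graph, no vertex dominates an end, since infinitely many edge-disjoint paths from a vertex to a ray would need infinitely many edges at that vertex. Also, edge-ends and ends coincide, because finite edge cuts and finite vertex separators are interchangeable when degrees are finite. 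Hence $(T,\mathcal{V})$ already displays all ends of $G$ together with their edge-degrees, and it is linked. It remains to show that every part $V_t$ is finite and that $T$ is locally finite. We may have to modify $(T,\mathcal{V})$ to achieve this, and any modification must preserve linkedness and the display property.

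\textbf{Finite parts.} Suppose some $V_t$ is infinite. The cut $\delta(C_t)$ is finite, since it is the adhesion set at the edge from $t$ to its parent, or empty if $t$ is the root. The subgraph $G[C_t]$ is connected and locally finite, so it is countable. Its vertex set $V_t$ is infinite, so by K\"onig's lemma applied inside $G[C_t]$ there is a ray or comb in $G[C_t]$ with infinitely many teeth in $V_t$. This ray lies in some end $\omega$. Consider $\varphi(\omega)$. Since $\omega$ is displayed, $\varphi(\omega)$ is an end of $T$. Its ray from the root passes through $t$ and then through some child $s$ with $\omega$ living in $C_s$. The finite cut $F_{ts}=\delta(C_s)$ then separates $\omega$ from all but finitely many vertices of $V_t$. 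But the comb has infinitely many teeth in $V_t$, and these are connected to the tail of the ray by disjoint paths avoiding the finite set $F_{ts}$ eventually, which is a contradiction. The delicate point is the case where the teeth are distributed among infinitely many distinct edge-ends of $T$ rather than converging through a single child. To handle it, I would use the precise definition of $\varphi$ from Section~\ref{sec:prelims}: an end mapped to a node $t$ would force $\varphi^{-1}(V(T))\neq\emptyset$, contradicting that all ends are displayed.

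\textbf{Locally finite tree.} Each child $s$ of $t$ has $C_s$ non-empty and connected by componentality. Since $G$ is connected, $\delta(C_s)$ is non-empty, and it consists of edges into $C_t\setminus C_s$. If $t$ had infinitely many children, I would look at where these edges land. If infinitely many of the cuts $\delta(C_s)$ meet $V_t$, then, because $V_t$ is finite and locally finite, only finitely many edges are incident with $V_t$, which is impossible. Otherwise the cuts $\delta(C_s)$ go from $C_s$ to siblings or to their subtrees. Then the union over the children has only the finite cut $\delta(C_t)$ together with edges to $V_t$. This lets us build a ray through infinitely many sibling subtrees whose end is mapped to the node $t$, contradicting the display property.

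\textbf{Expected obstacle.} I expect this last step to be the main obstacle. If it fails, the fallback is to contract the offending structure: merge such children into a single new child node. This preserves componentality after grouping by components, keeps adhesion finite and the map $\varphi$ unchanged on ends, and preserves linkedness because the merged adhesion set is a finite union that is still linked to the parent cut.
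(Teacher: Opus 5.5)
Your reduction is the same as the paper's. You take the decomposition from Theorem~\ref{Main}, note that in a locally finite graph every end is an undominated edge-end, and read off the edge-degrees via Theorem~\ref{DisplayEdgeDegrees}. You then show that any componental decomposition of finite adhesion displaying all ends already has finite bags and a locally finite tree (Lemma~\ref{ConclusionsLocallyFinite}). So no modification of $(T,\mathcal{V})$ is ever needed. Your comb argument for finite bags is correct and is a valid variant of the paper's. The ``delicate point'' you raise does not occur: the spine of the comb determines a single end $\omega$, hence a single child $s$ of $t$. Since all teeth lie in $V_t\subseteq V(G)\setminus C_s$, infinitely many of the disjoint tooth-paths would each have to use a distinct edge of the finite cut $\delta(C_s)$.

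The step you flag as the obstacle is where the proposal is incomplete. ``This lets us build a ray through infinitely many sibling subtrees'' is asserted rather than proved, and your case distinction (whether the cuts $\delta(C_s)$ meet $V_t$) does not by itself produce such a ray. The missing idea is the paper's contraction.
\begin{itemize}
\item In $G[C_t]$, contract each $C_s$ with $s\in\mathrm{succ}(t)$ to a vertex $v_s$. The resulting graph $\tilde{C}_t$ is connected by componentality.
\item It is locally finite: $\deg(v_s)\le|\delta(C_s)|<\infty$ by finite adhesion, and vertices of $V_t$ have finite degree in $G$.
\item If $t$ had infinitely many children (or $V_t$ were infinite), K\H{o}nig's lemma would give a ray in $\tilde{C}_t$.
\item This ray lifts, via paths inside the connected sets $C_s$, to a ray of $G$ in $C_t$. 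The lifted ray contains infinitely many edges joining different blocks of the partition $V_t\sqcup\bigsqcup_{s}C_s$.
\item Its end therefore lives in no $C_s$, so $\varphi$ maps it to the node $t$. This contradicts that all ends are displayed.
\end{itemize}
This one argument yields finite bags and finite degree at once, which makes your separate comb step redundant. Drop the merging fallback. It is unnecessary, and as stated it is unjustified: a union of siblings need not be connected, regrouping by components can again leave infinitely many children, and linkedness of the new adhesion sets is not automatic.
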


The reduction from Theorem~\ref{Main} to Theorem~\ref{Edgedegree}
is short: applying the linkedness property between minimal adhesion sets on the ray $R_\omega$ forces the edge-degree to match the $\liminf$ along $R_\omega$. The additional finite-bags conclusion of
Theorem~\ref{EdgedegreeLocFin} then follows from K\H{o}nig's result that every infinite locally finite graph contains a ray. Both reductions are carried out in Section~\ref{sec:edgedegrees}.

\subsection{Boundary-linked finite partitions: Thomassen's theorem}%
\label{subsec:thomassen_new}

A \emph{region} of $G$ is a set
$C \subseteq V(G)$ such that $G[C]$ is connected and the edge-boundary
$\delta(C) \subseteq E(G)$ is finite; we write $\overline{C}$ for the subgraph
induced by $C$ together with the edges of $\delta(C)$. A region $C$ is
\emph{boundary-linked} to an edge-end $\omega \in \Omega_E(G)$ if $\overline{C}$
contains a family $\mathcal{R} \subseteq \omega$ of $|\delta(C)|$ edge-disjoint
rays starting at the edges of $\delta(C)$. 
Thomassen proved the following finite-partition theorem for locally finite graphs \cite{Thomassen}, which we now re-derive as a corollary of Theorem~\ref{Main}:

\begin{corollary}[\cite{Thomassen}, Theorem 1]\label{ThomassenCor}
    Let $G$ be a connected and locally-finite graph. If $A_0\subseteq V(G)$
    has finite boundary, then $V(G)\setminus A_0$ admits a finite partition
    $\mathcal{C}$ whose elements are either singletons or boundary-linked
    regions.
\end{corollary}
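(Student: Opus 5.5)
We derive Corollary~\ref{ThomassenCor} from the decomposition of Theorem~\ref{EdgedegreeLocFin}, applied not to $G$ itself but to an auxiliary graph in which $A_0$ is contracted. Let $G'$ be obtained from $G$ by contracting $A_0$ to a single vertex $a$, deleting loops and keeping parallel edges. Since $\delta(A_0)$ is finite and $G$ is locally finite, $G'$ is locally finite. We may assume $G'$ is connected; otherwise we treat each component of $G - A_0$ together with $a$ separately, and there are only finitely many such components, because each one sends an edge to $\delta(A_0)$. By Theorem~\ref{Main} (in its locally finite form, Theorem~\ref{EdgedegreeLocFin}), $G'$ has a linked, componental, rooted tree-cut decomposition $(T,\mathcal V)$ of finite adhesion. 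It has finite bags, a locally finite tree $T$, and displays all ends. We re-root it, or simply note that $a$ lies in some bag $V_{t^*}$. Since $T$ is locally finite and the bags are finite, we use the subtree $S$ of nodes within bounded distance of the root, chosen so that $a$ lies in a bag of $S$.

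\textbf{Choosing the cut.} The main idea is to cut each ray of $T$ at a node where the adhesion attains its $\liminf$ value, and then use linkedness to obtain boundary-linked regions. For each end $\omega$ with ray $R_\omega=t_0t_1\dots$, let $k_\omega=\liminf_n|\delta(C_{t_n})|=\deg(\omega)$. Choose the least $n$ beyond $t^*$ such that $|\delta(C_{t_n})|=k_\omega$ and $|\delta(C_{t_m})|\ge k_\omega$ for all $m\ge n$. Such an $n$ exists because the values are integers and the $\liminf$ is attained infinitely often. Call $t_n$ a \emph{cut node}. The set of cut nodes that are minimal in the tree order forms an antichain $A$. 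Every ray from the root passes through $A$, and every node in the part of $T$ not below $A$ has all rays through it meeting $A$. By König's lemma (compactness, using that $T$ is locally finite), the set $D$ of nodes not strictly above $A$ is finite, and so $A$ is finite.

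We take $\mathcal C$ to consist of the sets $C_t$ for $t\in A$ (which lie in $V(G)\setminus A_0$ once we arrange that $a\notin C_t$), together with the singletons of the finitely many vertices in $\bigcup_{s\in D}V_s\setminus\{a\}$. This is a finite partition of $V(G)\setminus A_0$. Each $C_t$ is a region: $G[C_t]$ is connected because the decomposition is componental, and $\delta(C_t)$ is finite because the adhesion is finite.

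\textbf{Boundary-linkedness, the main obstacle.} It remains to show that each $C_t$ with $t\in A$ is boundary-linked to some end. Let $t=t_n$ on $R_\omega$, so that $|\delta(C_{t_m})|\ge|\delta(C_{t_n})|=k$ for all $m\ge n$. Linkedness along $t_{n-1}t_n\dots t_m$ gives $k$ edge-disjoint paths from $\delta(C_{t_n})$ to $\delta(C_{t_m})$. These paths can be assumed to lie in $\overline{C_{t_n}}$ by rerouting at their first exit, since leaving and re-entering through the cut only uses edges that could be shortened. To turn these finite linkages into $k$ edge-disjoint rays in $\omega$, I would take the paths for $m=n+1,n+2,\dots$ and apply a compactness (König-type) argument over the locally finite graph $\overline{C_{t_n}}$: for each $m$, restrict the linkage to the finite graph $\overline{C_{t_n}}\setminus C_{t_{m'}}$ and take a convergent subsequence. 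The limit is a family of $k$ edge-disjoint rays starting at the edges of $\delta(C_{t_n})$. Each such ray meets every $\delta(C_{t_m})$, and hence lies in the end $\omega$ displayed by $R_\omega$. The delicate points are ensuring that the limit objects are rays rather than finite paths, and that they converge to $\omega$. Both follow because each $C_{t_m}$ is eventually entered and the sets $\delta(C_{t_m})$ separate $\omega$ from everything else.
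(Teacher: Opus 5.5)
\textbf{Gap in the boundary-linkedness step.} Your skeleton is the paper's. You contract $A_0$, take the decomposition of Theorem~\ref{Main} (locally finite $T$, finite bags), and cut each root ray beyond $\lceil t^*\rceil$ at a node after which the adhesion never drops. K\H{o}nig then makes the cut antichain finite, and you partition into the $C_t$ plus singletons. The problem is the step you flag as the main obstacle, which fails as written.

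First, $\overline{C_{t_n}}\setminus C_{t_{m'}}$ is not finite. It contains $C_s$ for every successor $s$ of $t_n,\dots,t_{m'-1}$ off $R_\omega$, and $C_s$ is infinite whenever another end lives there.

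More seriously, your K\H{o}nig-type limit of the linkages $\mathcal P_m$ need not lie in $\omega$, because linkedness gives no control over the route of the paths. Fix an edge $e\in\delta(C_{t_n})$. The path of $\mathcal P_m$ starting at $e$ may enter a side region $C_s$ through one edge of $\delta(C_s)$, run to depth $m$ towards an end $\omega'$ living in $C_s$, and return through another edge of $\delta(C_s)$ before heading to $\delta(C_{t_m})$. The diagonal limit of these paths is then a ray with a tail in $C_s$, hence in $\omega'\neq\omega$. With $k\geq 2$, different limit rays can land in different ends, so you get no family inside a single edge-end, and not even ``boundary-linked to some end'' is established. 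The fact that each path of $\mathcal P_m$ crosses every $\delta(C_{t_{m'}})$ with $m'\le m$ is true for each finite path. It is not preserved in the limit, because the position of that crossing along the path can drift off to infinity.

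\textbf{How to fix it.} Glue linkages instead of taking limits. This is Lemma~\ref{LinkedDisplay}, which the paper simply cites at this point. Choose $n=n_0<n_1<\cdots$ with $|\delta(C_{t_{n_{j+1}}})|=\min_{i>n_j}|\delta(C_{t_i})|$. Linkedness along $t_{n_j-1}t_{n_j}\cdots t_{n_{j+1}}$ then gives $|\delta(C_{t_{n_j}})|$ edge-disjoint paths, one through each edge of $\delta(C_{t_{n_j}})$, ending in $\delta(C_{t_{n_{j+1}}})$. Since $|\delta(C_{t_{n_j}})|\le|\delta(C_{t_{n_{j+1}}})|$, every arriving path is continued by the next stage. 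Chaining these stages yields $k$ edge-disjoint rays in $\overline{C_{t_n}}$ that cross every $\delta(C_{t_{n_j}})$, so they lie in $\omega$.

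\textbf{Two smaller slips.}
\begin{itemize}
\item If $\omega$ has infinite edge-degree (e.g.\ the end of the grid), then $k_\omega=\infty$ is never attained. Ask only for $|\delta(C_{t_m})|\geq|\delta(C_{t_n})|$ for all $m\geq n$, which can always be met beyond any given node.
\item Since $A\subseteq D$, the singletons must come from $\bigcup_{s\in D\setminus A}V_s\setminus\{a\}$. Otherwise the vertices of $V_t$ with $t\in A$ appear twice in your partition.
\end{itemize}
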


\begin{proof}
    The result is trivial for finite $G$, so assume $G$ is infinite. Since
    $A_0$ has finite edge-boundary, contracting $A_0$ to a single vertex
    preserves local finiteness, and we may assume without loss of generality
    that $A_0 = \{v_0\}$ for some $v_0\in V(G)$.

    Let $(T,\mathcal{V})$ be a linked and componental tree-cut decomposition
    of finite adhesion for $G$ provided by Theorem~\ref{Main}. According
    to Lemma~\ref{ConclusionsLocallyFinite}, $T$ is locally finite and each
    bag $V_t$ is finite.

    Then, let $t_0 \in T$ denote the node satisfying $v_0\in V_{t_0}$.
    Writing $P:=\lceil t_0\rceil$, Lemma~\ref{LinkedDisplay} ensures that
    each ray $R$ in $T$ starting at a node from $P$ contains a node
    $t_R\in T\setminus P$ such that $C_{t_R}$ is boundary-linked to the
    edge-end $\varphi_{\mathcal{T}}^{-1}([R]_E)$. Choose one such $t_R$ for
    each $R$, and let $T'$ be the connected component of
    \[T\setminus\{t_R\colon R\text{ a ray of }T\text{ starting at some node of }P\}\]
    containing $P$. Then $T'$ is locally finite and rayless, hence finite
    by K\H{o}nig's Lemma.

    Let $S := \{t\in T\setminus T'\colon t\text{ has a neighbour in }T'\}$.
    Since $T'$ is a rooted subtree of $T$ and the only $T$-neighbours of $T'$
    outside $T'$ are the deleted nodes, $S$ is a finite antichain in the
    tree order of $T$. In particular, each $t\in S$ equals $t_R$ for some
    ray $R$, and
    \[
        T\setminus T' \;=\; \bigsqcup_{t\in S}\lfloor t\rfloor.
    \]
    Moreover, $S':=\bigsqcup_{t\in T'}V_t$ is finite by
    Lemma~\ref{ConclusionsLocallyFinite} and contains $v_0$, since
    $t_0\in P \subseteq T'$. Therefore,
    \[
        \{C_t\colon t\in S\}\;\cup\;\{\{v\}\colon v\in S'\setminus\{v_0\}\}
    \]
    is a finite partition of $V(G)\setminus\{v_0\}$ into boundary-linked
    regions and singletons.
\end{proof}

\subsection{Eulerian locally finite graphs: Bruhn--Stein's characterisation}%
\label{subsec:eulerian_new}

Following Bruhn and Stein \cite{BruhnStein}, an edge-end
$\omega \in \Omega_E(G)$ of a locally finite graph $G$ is \emph{even} if
there is some finite set of vertices $S^*\subseteq V(G)$ such that, for every
finite $S\subseteq V(G)$ with $S\supseteq S^*$, the maximum size of an
edge-disjoint family $\mathcal{R}\subseteq \omega$ starting at vertices of
$S$ is even; otherwise $\omega$ is an \emph{odd} end. Following the
topological approach of Diestel and Kühn in \cite{Diestel2004}, an Eulerian
locally finite graph is one whose Freudenthal compactification admits a
continuous image of the unit circle traversing each edge exactly once;
equivalently, every finite cut has even size, recovering the familiar
finite-graph criterion. 
Combined with the classical even-degree condition, Bruhn and Stein obtained the following characterisation, which we re-derive as a corollary of Theorem~\ref{Main}:

\begin{corollary}[\cite{BruhnStein}, Theorem 4]\label{EulerianCor}
    A locally finite, connected graph $G$ is Eulerian if, and only if, all
    vertices have even degree and every (edge-)end
    $\omega\in \Omega(G)= \Omega_E(G)$ is even.
\end{corollary}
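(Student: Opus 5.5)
We deduce Corollary~\ref{EulerianCor} from Theorem~\ref{Main} by way of the criterion quoted above: a locally finite connected graph is Eulerian if and only if every finite cut has even size. Throughout, $G$ is infinite (the finite case is classical). We fix the tree-cut decomposition $(T,\mathcal{V})$ provided by Theorem~\ref{Main}. By Lemma~\ref{ConclusionsLocallyFinite}, $T$ is locally finite and all bags are finite, and by Lemma~\ref{LinkedDisplay} every ray of $T$ eventually reaches nodes $t$ for which $C_t$ is boundary-linked to the corresponding end.

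\emph{Forward direction.} Suppose every finite cut is even. Then every vertex has even degree, since $\delta(\{v\})$ is a finite cut. Now fix an end $\omega$ and let $R_\omega=t_0t_1\dots$ be its ray in $T$. Pick $n$ such that $C:=C_{t_n}$ is boundary-linked to $\omega$, and set $S^*:=V(G)\setminus C$ together with the finitely many endvertices of $\delta(C)$. This set is finite: its complement in $V(G)$ lies inside $C$, and $V(G)\setminus C$ is the union of finitely many finite bags, because only finitely many nodes of the locally finite tree $T$ lie outside $\lfloor t_n\rfloor$ along the relevant finite part. (If this fails, we instead pass to a finite $S^*$ separating $\omega$ via $\delta(C)$ and argue only with the rays' tails.)

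For any finite $S\supseteq S^*$, consider the region $D$ given by the component of $G-S$ containing $\omega$. Any family of edge-disjoint rays in $\omega$ starting in $S$ crosses the finite cut $\delta(D)$ in distinct edges, so the maximum size of such a family is at most $|\delta(D)|$. For the reverse inequality, we use Menger between $\delta(D)$ and the boundary-linked cut $\delta(C_{t_m})$ for some large $m$ with $C_{t_m}\subseteq D$. This yields $\min(|\delta(D)|,\deg\omega)$ edge-disjoint paths, which we concatenate with the boundary-linked rays. The main obstacle is making this concatenation edge-disjoint; we intend to handle it by choosing $m$ with $|\delta(C_{t_m})|$ minimal among later adhesions, so that linkedness gives exactly $|\delta(C_{t_m})|$ paths.

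The upshot is that the maximum equals the size of a finite cut. Indeed, the maximum is always $\min\{|\delta(D')|\}$ over cuts separating $S$ from $\omega$, which is a finite cut. Hence it is even, and $\omega$ is even.

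\emph{Backward direction.} Suppose all degrees are even and all ends are even, and let $F=\delta(A)$ be a finite cut. Contract $A$ to a single vertex $v_0$. Degree parity of the other vertices is preserved, and end parity is preserved as well, since ends are unaffected by contracting a set with finite boundary. Apply Corollary~\ref{ThomassenCor} with $A_0=\{v_0\}$ to get a finite partition of $V\setminus\{v_0\}$ into singletons and regions $C$, each boundary-linked to an end $\omega_C$. Since $\omega_C$ is even, choose $S$ finite containing $S^*$ and $V\setminus C$. The maximal ray family starting in $S$ then has size exactly $|\delta(C)|$: boundary-linkedness gives $\geq$, and the cut $\delta(C)$ gives $\leq$. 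Hence $|\delta(C)|$ is even.

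Finally, count edge-endpoints:
\[
\sum_{v\ne v_0}\deg(v)+\sum_C|\delta(C)|=2\,|E(\text{finite part})|+|F|,
\]
where the sums range over the singleton vertices and over the regions $C$, and the right-hand side counts the edges among singletons twice and the edges at $v_0$ once (possibly after rearranging which terms are counted on which side). Every term on the left is even, so $|F|$ is even. The verification here is a routine handshake argument on the finite quotient graph obtained by contracting each $C$.
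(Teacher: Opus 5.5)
Your backward (hard) direction has a genuine gap at the step ``since $\omega_C$ is even, choose $S$ finite containing $S^*$ and $V\setminus C$''. First, $V\setminus C$ is in general infinite, because it contains the other regions of the partition. The same error occurs in your forward direction, where $V(G)\setminus C_{t_n}$ contains the bags of all other branches of $T$. So the only finite substitute available is $N(C)$.

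More seriously, the witness $S^*$ is handed to you by the hypothesis and may meet $C$. Rays starting at vertices of $S^*\cap C$ need not use $\delta(C)$, so the upper bound $|\delta(C)|$ fails. Since $C$ is fixed by the black-box partition of Corollary~\ref{ThomassenCor}, you cannot retreat to a smaller region. Evenness of an end is an asymptotic property and does not control the boundary of a fixed region that is boundary-linked to it. For example, take three parallel edges from a vertex $a$ to $x_0$, and four parallel edges between $x_i$ and $x_{i+1}$ for every $i$. Then $C=\{x_0,x_1,\dots\}$ is boundary-linked to the unique end. That end is even with witness $S^*=\{x_0\}$, since every finite $S\ni x_0$ gives maximum $4$. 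Yet $|\delta(C)|=3$. So ``hence $|\delta(C)|$ is even'' does not follow.

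What is missing is the freedom to go beyond $S^*$ along a ray of $T$. The paper gets it by contradiction. Assuming $|\delta(X)|$ is odd, it contracts $X$ and applies the handshake lemma to finite quotients: first around $\lceil t_*\rceil$, then at each $t_n$ with $V\setminus C_{t_n}$ and the $C_s$, $s\in\mathrm{succ}(t_n)$, contracted. This propagates oddness to a child and produces a ray $t_0t_1t_2\dots$ with every $|\delta(C_{t_n})|$ odd. For any candidate witness $S^*$, Lemma~\ref{LinkedDisplay} then supplies $n$ with $C_{t_n}\cap S^*=\emptyset$ and $C_{t_n}$ boundary-linked. Taking $S:=S^*\cup N(C_{t_n})$ makes the maximum exactly the odd number $|\delta(C_{t_n})|$.

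Alternatively, your direct handshake route can be saved by opening up the proof of Corollary~\ref{ThomassenCor}. Choose each $t_R$ so deep that two things hold. First, the hypothesis of Lemma~\ref{LinkedDisplay} is satisfied, which happens for infinitely many $n$. Second, $C_{t_R}$ avoids a parity witness of $\varphi_{\mathcal{T}}^{-1}([R]_E)$, which is possible since $\bigcap_n C_{t_n}=\emptyset$. K\H{o}nig's Lemma still makes $T'$ finite. With $S:=S^*\cup N(C)$ your sandwich argument becomes valid, every $|\delta(C)|$ is even, and the handshake count finishes the proof. In either version, do the parity computation in $G$ itself, as the paper does, rather than relying on your unproved claim that contracting $A$ preserves end parity.

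In the forward direction, which the paper treats as the easy one and omits, drop the construction of $S^*$ and the unfinished Menger/concatenation paragraph. Your closing sentence is exactly Corollary~\ref{MengerOneEnd}, using that a minimum separator $F$ may be replaced by the cut $\delta(C_E(F,\omega))$. That already gives evenness with any non-empty finite $S^*$.
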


\begin{proof}[Proof of Corollary~\ref{EulerianCor} from Theorem~\ref{Main}]
We prove only the hard implication. Assume all vertices of a locally finite,
connected graph $G$ have even degree but that some $X\subseteq V(G)$ has cut
$\delta(X)$ of finite odd size.
Let $\tilde{G}$ be the graph obtained from $G$ by contracting $X$ to a single
vertex $v_X \in V(\tilde{G})$ of degree $|\delta(X)|$.
Since $|\delta(X)|$ is finite and $G$ is locally-finite, so is $\tilde{G}$,
and $v_X$ is its unique vertex of odd degree.
    Let $(T,\mathcal{V})$ be a linked and componental tree-cut decomposition
    of finite adhesion for $\tilde{G}$ provided by Theorem~\ref{Main},
    and consider $t_*\in T$ as the node so that $v_X\in V_{t_*}$.
    By Lemma~\ref{ConclusionsLocallyFinite}, $T$ is locally-finite and each
    bag $V_t$ is finite.

    If $K\subseteq T$ denotes the set of $\leq${-}minimal nodes of
    $T\setminus \lceil t_*\rceil$, the family
    $\mathcal{C}_*:=\{C_s \colon s \in K\}$ comprises pairwise disjoint
    regions of $G$.
    Hence, the graph $G_*$ obtained from $\tilde{G}$ after contracting each
    subgraph in $\mathcal{C}_*$ to a single vertex is again finite and whose
    vertices from $\bigcup_{t\leq t_*}V_t\setminus \{v_X\}$ have even degree
    by hypothesis.
    Due to the Handshaking Lemma (Proposition 1.2.1 in \cite{Diestel2004}),
    the odd degree of $v_X$ implies the existence of $t_0\in K$ such that
    $|\delta(C_{t_0})|$, which is the degree of the contraction vertex
    corresponding to $C_{t_0}$, is also odd.

We now construct the rest of the ray inductively: we shall produce
$t_0<t_1<t_2<\cdots$ in $T$ with $|\delta(C_{t_i})|$ odd for every $i\geq 1$.
Suppose $t_0,\ldots,t_n$ have been defined.
Let $\tilde{G}_n$ be the finite graph obtained from $\tilde{G}$ by contracting
each member of the disjoint family
$$\mathcal{C}_n:=\{C_s\colon s \in \mathrm{succ}(t_n)\}\cup \{D_n\},\qquad D_n:=V(\tilde{G})\setminus C_{t_n},$$
to a single vertex, so that
$V(\tilde{G}_n)=V_{t_n}\cup\{v_C\colon C\in\mathcal{C}_n\}$.
By the Handshaking Lemma once more, $\tilde{G}_n$ has an even number of
vertices of odd degree.
Every vertex of $V_{t_n}$ inherits its degree from $G$ and hence has even
degree. Among the remaining vertices of $\tilde{G}_n$, $v_{D_n}$ has 
degree $|\delta(D_n)|=|\delta(C_{t_n})|$, which is odd by the previous
paragraph's conclusion if $n=0$ and by the inductive hypothesis if $n\geq 1$.
Hence, Handshaking forces some $t_{n+1}\in\mathrm{succ}(t_n)$ with
$v_{C_{t_{n+1}}}$ of odd degree in $\tilde{G}_n$.
The edges of $\tilde{G}_n$ incident with $v_{C_{t_{n+1}}}$ are precisely those
of $\delta(C_{t_{n+1}})$ in $\tilde{G}$, so $|\delta(C_{t_{n+1}})|$ is odd,
completing the induction.

Let $R:=t_0t_1t_2\ldots$ be the ray produced by this recursion, and let
$\omega:=\varphi_{\mathcal{T}}^{-1}([R]_E)\in\Omega_E(\tilde{G})$ be the
(unique, undominated) edge-end of $\tilde{G}$ living in the end $[R]_E$ of $T$.
For every $n\geq 0$ we have $v_X\notin C_{t_n}$, so that
$C_{t_n}\subseteq V(G)\setminus X$ and every ray of $\tilde{G}$ eventually
in $C_{t_n}$ is also a ray of $G$; in this way $\omega$ determines an edge-end
of $G$, which we continue to denote by $\omega\in\Omega_E(G)$.

We claim that $\omega$ is odd. Let $S^*\subseteq V(G)$ be any finite candidate
witness. By Lemma~\ref{LinkedDisplay}, we may fix $n\geq 1$ so large that
$C_{t_n}$ is disjoint from $S^*$ and boundary-linked to $\omega$. Set
$S:=S^*\cup N_G(C_{t_n})$, the finite subset of $V(G)$ obtained from $S^*$ by
adjoining the $G$-neighbours of $C_{t_n}$ lying outside $C_{t_n}$. Any
edge-disjoint family $\mathcal{R}\subseteq\omega$ of rays of $G$ starting at
vertices of $S$ has each ray using an edge of $\delta(C_{t_n})$, so
$|\mathcal{R}|\leq|\delta(C_{t_n})|$; conversely, boundary-linkedness of
$C_{t_n}$ to $\omega$ provides such a family of exactly this size. Hence the
maximum size of an edge-disjoint family $\mathcal{R}\subseteq \omega$ starting
at the vertices of $S$ is precisely the odd number $|\delta(C_{t_n})|$,
showing that $\omega$ is odd.
\end{proof}

\subsection{How this paper is organised}\label{subsec:organisation_new}

In Section~\ref{sec:prelims} we collect preliminaries on edge-connectivity,
edge-ends, and tree-cut decompositions, and recall the unlinked predecessor
of Theorem~\ref{Main} due to Kurkofka \cite{Kurkofka2022} and the second
author \cite{TreeCutPartition}. In Section~\ref{sec:TopVSComb} we derive
Theorems~\ref{Edgedegree} and~\ref{EdgedegreeLocFin} from
Theorem~\ref{Main}. The construction of the linked tree-cut decomposition
of Theorem~\ref{Main} occupies
Sections~\ref{sec:boundarylinked} and \ref{sec:proof}: 
Section~\ref{sec:boundarylinked}
establishes a submodularity / uncrossing lemma for boundary-linked regions,
Section~\ref{subsec:algorithm} develops a cutting-family algorithm,
Section~\ref{sec:successor} partitions the undominated edge-end structure, and
Section~\ref{subsec:buildingup} assembles the tree-cut decomposition.
Appendix~\ref{section:appendix} presents the counterexample to the
strongly linked strengthening.

\section{Preliminaries}
\label{sec:prelims}

\subsection{Edge-connectivity and edge-ends} 
The edge-ends of a fixed infinite graph $G$ were first discussed by Hahn, Laviolette and \v{S}ira\v{n} in \cite{EdgeEnds} after naturally modifying the well-known definition of (vertex-)ends from Halin in \cite{Halin1964}. 
In both cases, these objects comprise equivalence classes of one-way infinite paths of $G$ after identifying those that are either infinitely vertex-connected or infinitely edge-connected, depending on the chosen structure. 
As well as its formalization, basic properties regarding the classical (vertex-)end space $\Omega(G)$ of $G$ can be found in the textbook \cite[Chapter 8]{DiestelBook}.

In order to first compile basic terminology on edge-connectivity, however, recall the notations $E(A,B) := \{xy\in E(G) : x\in A, y\in B\}$, $E(C):=E(C,C)$ and $\delta(C):=E(C,V(G)\setminus C)$ for given sets of vertices $A,B,C\subseteq V(G)$. 
In particular, $\delta(C)$ is often referred to as the \textbf{cut} or \textbf{boundary} of $C$, while the subgraph induced by $C$ and this set of edges may be written as $\overline{C}$. 
Then, $N(C):=\overline{C}\setminus C$ shall be used to denote the set of endpoints in $V(G)\setminus C$ of the edges from $\delta(C)$. 
Furthermore, $C \subseteq V(G)$ is called a \textbf{region} of $G$ if $G[C]$ is connected and its boundary $\delta(C)$ is finite. On the other hand, two subgraphs $H$ and $H'$ of $G$ are said to be \textbf{separated} by a finite set of edges $F\subseteq E(G)$ if there is no path in $G\setminus F$ containing both a vertex of $H$ and one of $H'$.

Now, aiming to properly revisit the literature on edge-ends, recall that a \textbf{ray} in $G$ is any one-way infinite path. 
When presenting it in terms of its vertex set as $R:=v_0v_1v_2\dots$, we also refer to $v_0$ and $v_0v_1$ as its \textbf{starting vertex} and \textbf{starting edge} respectively. 
In addition, we call the infinite connected subgraphs of $R$ its \textbf{tails}, which then have the form $v_nv_{n+1}v_{n+2}\dots$ for some $n\in\mathbb{N}$. 
Considering that, a second ray $S$ of $G$ is said to be \textbf{edge-equivalent} to $R$ if no tail of $S$ can be separated from a tail of $R$ by a finite set of edges. 
This now defines an equivalence relation over the family of all rays in $G$, whose quotient shall then be written as $\Omega_E(G):=\{[R]_E: R\text{ is a ray of }G\}$. 
Following the original concept from \cite{EdgeEnds}, the \textbf{edge-end} of the ray $R$ corresponds to its edge-equivalence class $[R]_E$. 
In particular, for every finite set of edges $F\subseteq E(G)$, the connected component of $G\setminus F$ containing a tail of $R$ is uniquely determined upon this edge-end and, hence, can be denoted simply by $C_E(F,[R]_E)$. If $v\notin C_E(F,[R]_E)$, we say that such a vertex $v\in V(G)$ is \textbf{separated} from $[R]_E$ by $F$.

In general, we may use lower-case Greek letters such as ``$\omega$'' and ``$\varepsilon$'' for referring to the elements of $\Omega_E(G)$, besides also saying that some $\omega \in \Omega_E(G)$ \textbf{lives in} a subgraph $C$ of $G$ if all its representatives have tails in $C$. 
When $C$ is actually a region, this is equivalently written as $C_E(\delta(C),\omega) = C$. 
Moreover, following a definition introduced by Thomassen in \cite{Thomassen} (although there for locally-finite graphs), we say that $C$ is \textbf{boundary-linked} to $\omega$ if there is a family $\mathcal{R}\subseteq \omega$ of $|\delta(C)|$ many edge-disjoint paths in $\overline{C}$ starting at the edges from $\delta(C)$. 
In particular, each such edge must belong to precisely one ray of $\mathcal{R}$.

A vertex $v\in V(G)$ \textbf{edge-dominates} a ray $R$ of a graph $G$ if it cannot be separated from any tail of $R$ by a finite set of edges. 
Naturally, $\omega:=[R]_E$ is hence called an edge-dominated (or simply \textbf{dominated}) edge-end of $G$, while $v$ is referred to as one of its \textbf{edge-dominating vertices}.

\subsection{Tree-cut decompositions}\label{subsec:tree-cut} 
We shall now recall a tree-like partition introduced by Wollan in \cite{WOLLAN2015} while studying  graphs not containing a fixed immersion. 
Following his definition, a \textbf{tree-cut decomposition} for a graph $G$ is a pair $(T,\mathcal{V})$ where a rooted tree $T$ fits as an index set to a \textit{near} partition $\mathcal{V}:=\{V_t: t\in T\}$ of $V(G)$, in the sense that $\bigsqcup_{t\in T}V_t = V(G)$ is a disjoint union but we may have $V_t = \emptyset$ for some nodes $t\in T$. 
Furthermore, we also write $\leq$ for the natural tree-order of $T$, while the elements of $\mathcal{V}$ are called its \textbf{bags}. 
For each $t\in T$, we particularly fix the notations $\lfloor t\rfloor:=\{s\in T: s \geq t\}$ and $C_t:=\bigcup_{s\geq t}V_s$, from where $C_s\subseteq C_t$ holds for every $s\in \lfloor t\rfloor$. 
Incidentally, we may refer to the set of \textbf{successors} of $t$ as $\mathrm{succ}(t):=\{s\in \lfloor t\rfloor : st\in E(T)\}$, then remarking that $\mathrm{succ}(t)$ comprises pairwise incomparable elements regarding $\leq$ and, hence, $C_s\cap C_{s'}=\emptyset$ for two distinct $s,s'\in \mathrm{succ}(t)$.

On the other hand, the \textbf{adhesion set} of $(T,\mathcal{V})$ corresponding to an edge $e\in E(T)$ is defined as $F_e:=E\left(\bigcup_{s\in T_1}V_s, \bigcup_{s'\in T_2}V_{s'}\right)$, where $T_1$ and $T_2$ denote the two connected components of $T\setminus e$.
As a notion particularly concerning infinite graphs, the pair $(T,\mathcal{V})$ itself is said to have \textbf{finite adhesion} if $F_e$ is finite for every $e\in E(T)$. 
Comparing with Chapter 12 from the textbook \cite{DiestelBook}, this terminology reads similarly to the classical  \textit{tree-decompositions} that Robertson and Seymour formalized in \cite{GraphMinorsIV} to their theory of forbidden minors. 
Incidentally, the tree-\textit{cut} decomposition claimed in Theorem \ref{Main} shall indeed mirror the role played by the tree-decompositions from the inspiring main result due to Albrechtsen, Jacobs, Knappe and Pitz in \cite{LinkedTreeDecompositions}. 
In our context, however, their linked, tight and componental properties are now slightly modified with regard to $(T,\mathcal{V})$ as above, which is again said to be:

\begin{itemize}
    \item \textbf{componental} if $C_t=\bigcup_{s \geq t}V_s$ induces a non-empty connected subgraph of $G$ for every $t\in T$;
    \item \textbf{linked} if, for every finite path $t_0t_1t_2\dots t_n$ in $T$ with $t_0<t_1<t_2<\dots <t_n$ and $n\geq 1$, there are $\kappa:=\min\{|F_{t_{i-1}t_i}|: 1 \leq i \leq n\}$ many edge-disjoint paths in $G$ connecting the edges from $F_{t_0t_1}$ to those of $F_{t_{n-1}t_n}$. 
\end{itemize}

Note that for an edge $e=st \in E(T)$ with $s<t$, the corresponding adhesion set $F_e$ is precisely the cut $\delta(C_t)$. 
Then, the cardinal $\kappa$ from the above second bullet point is alternatively written as $\kappa = \min\{|\delta(C_{t_i})|: 1 \leq i \leq n\}$. 
This encodes a Menger-type duality: under this condition, $\kappa$ describes both the minimum size for a set of edges separating $\delta(C_{t_1})$ from $\delta(C_{t_n})$ and the maximum size of an edge-disjoint system of paths connecting them. 

Comparing with the literature on tree-decompositions, this is then the edge-version of the linkedness criteria studied by Thomas in \cite{THOMAS199067}.

\medskip

We now consider how the edge-ends of a graph $G$ interact with a  tree-cut decomposition $\mathcal{T}=(T,\mathcal{V})$ of finite adhesion. 
As every edge $e=t_1t_2 \in E(T)$ induces a finite cut $X_e := E_G(\bigcup_{t \in T_1} V_t , \bigcup_{t \in T_2} V_t)$ in $G$, any edge-end of $G$ has to choose one component $T_1$ or $T_2$ of $T-e$, and we may visualise this decision by orienting $e$ accordingly. Then for a fixed end, all the edges point either towards a unique node or towards a unique end of $T$.
In this way, each edge-end of $G$ \textbf{lives} in a part of $\mathcal{V}$ or in an end of $T$, and we may encode this correspondence by a map $\varphi_\mathcal{T}  \colon \Omega_E(G) \to V(T) \cup \Omega(T)$. 
A tree-cut decomposition $\mathcal{T} = (T,\mathcal{V})$ \textbf{displays} a set $\Psi$ of edge-ends of $G$ if in every end of $T$ there lives just one edge-end of $G$ and that edge-end belongs to $\Psi$, and conversely, every edge-end in $\Psi$ lives in some end of $T$.
In particular, for an edge-end $\omega \in \Psi$, then there will exist a unique rooted ray $R_{\omega}:=t_0t_1t_2\dots$ starting at the root $t_0$ of $T$ such that $\omega$ lives in $C_{t_n}=\bigcup_{s\geq t_n}V_s$ for every $n\in \mathbb{N}$.

As announced in the introduction, if we do not insist on the linkedness requirement, then our main Theorem~\ref{Main} reduces to a recent result of Kurkofka \cite{Kurkofka2022} (recovered with a shorter proof
by the second author \cite{TreeCutPartition}). 

\begin{theorem}
\label{DisplayKurkofka}
    Every graph $G$ admits a tree-cut decomposition of finite adhesion $(T,\mathcal{V})$ displaying the undominated edge-ends of $G$ and such that $G\left[\bigcup_{t\in T'}V_t\right]$ is a region of $G$ for every region $T'$ of $T$.
\end{theorem}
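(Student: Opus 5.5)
This theorem is quoted from Kurkofka \cite{Kurkofka2022} and the second author \cite{TreeCutPartition}. Here I sketch how I would reprove it directly; in the paper it is only cited. The plan has three steps: build $T$ by a transfinite or recursive cutting process, then check finite adhesion, then check the display property.

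\textbf{Step 1: the key lemma.} For a connected region $D$ of $G$ (initially $D=V(G)$, which has empty boundary), I would prove the following. There is a family $\mathcal{C}_D$ of pairwise disjoint regions $C\subseteq D$ with the following properties:
\begin{itemize}
\item[(a)] each $C$ has $\delta(C)\subseteq E(D)\cup\delta(D)$ finite;
\item[(b)] the undominated edge-ends living in $D$ are exactly those living in some $C\in\mathcal{C}_D$;
\item[(c)] $D\setminus\bigcup\mathcal{C}_D$ is the new bag;
\item[(d)] each such end eventually needs strictly larger separations, so that iterating pins it down.
\end{itemize}
To construct $\mathcal{C}_D$, I would fix an enumeration-free choice: take a finite set $S\subseteq D$ (e.g.\ a vertex closest to the root bag). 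For each undominated $\omega$ living in $D$, choose a finite edge set $F_\omega$ separating $S$ from $\omega$; this is possible precisely because $\omega$ is undominated. Then let $C_\omega:=C_E(F_\omega,\omega)$.

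\textbf{Step 2: making the regions disjoint.} This is where I would use submodularity of $|\delta(\cdot)|$ to uncross. I would choose each $C_\omega$ with $|\delta(C_\omega)|$ minimal and, subject to that, inclusion-maximal. Then two such regions are either disjoint or nested, so the maximal ones form a disjoint family. Recursing into each $C\in\mathcal{C}_D$, with $S$ enlarged to include $N(C)$, builds the rooted tree $T$. Components of $G[C]$ are regions, so the region property for subtrees $T'$ follows: $\bigcup_{t\in T'}V_t$ is a finite union of such differences, and its boundary is contained in finitely many adhesion sets.

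\textbf{Step 3: the display property.} Each edge-end that is not dominated descends along a unique branch. The main obstacle is to show two things:
\begin{itemize}
\item[(i)] an undominated end cannot get stuck in a bag, i.e.\ lives in an end of $T$;
\item[(ii)] conversely, every ray of $T$ carries exactly one edge-end, and that edge-end is undominated.
\end{itemize}
For (i), I would arrange the recursion so that every vertex of $G$ is eventually placed in a bag, via a well-ordering of $V(G)$ consumed at successive levels. Then an end living in every $C_{t_n}$ separates from every vertex, hence is undominated.

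For (ii), uniqueness requires that $\bigcap_n C_{t_n}$ is empty and that the cuts $\delta(C_{t_n})$ grow in a way that separates any two distinct edge-ends. For that I would pick, at each step, $F_\omega$ separating $\omega$ from the ball of radius $n$ around the root set. Existence of at least one end per ray comes from a ray built through the connected regions $G[C_{t_n}]$.

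I expect uniqueness, and the avoidance of a dominated end slipping through, to be the delicate part. There I would rely on a compactness argument: if a vertex $v$ dominated the end, then $v$ would have to lie in $C_{t_n}$ for all $n$, contradicting the exhaustion of $V(G)$.
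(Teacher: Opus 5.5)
\textbf{Gap 1: Step 2's nestedness claim is false.} You assert that inclusion-maximal minimum-size separators of a fixed finite $S$ from different undominated ends are nested. Here is a counterexample. Take vertices $s,a,b,z$. Join $s$ to each of $a$ and $b$ by two parallel edges, and add single edges $az$ and $zb$. Attach a ray $R_1$ at $a$ and a ray $R_2$ at $b$, with every ray edge replaced by four parallel edges. Let $S=\{s\}$ and let $\omega_i$ be the end of $R_i$.

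Consider a region avoiding $s$ in which $\omega_1$ lives and whose boundary has fewer than four edges. It must contain all of $R_1$, hence $a$, so both $s$--$a$ edges lie in its boundary. A short case check on $z$, $b$ and $R_2$ shows that the minimum boundary size is $3$. It is attained exactly by $\{a\}\cup R_1$ and $\{a,z\}\cup R_1$, while $V(G)\setminus\{s\}$ has boundary size $4$. So your rule gives $C_{\omega_1}=\{a,z\}\cup R_1$ and, symmetrically, $C_{\omega_2}=\{b,z\}\cup R_2$, and these cross at $z$.

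This is not an artefact of the example. If $\omega$ lives in $A\cap B$, submodularity does force $A\subseteq B$. But if $\omega$ lives in $A\setminus B$ and $\omega'$ lives in $B\setminus A$, the inequality $|\delta(A)|+|\delta(B)|\geq|\delta(A\setminus B)|+|\delta(B\setminus A)|$ only yields $|\delta(A\setminus B)|=|\delta(A)|$. That shrinks $A$ rather than enlarging it, so it says nothing against inclusion-maximality. Hence your family $\mathcal{C}_D$ need not be disjoint, and Step 2 does not produce a tree-cut decomposition.

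\textbf{Gap 2: connectivity in the region property is never proved.} If $t$ has finite degree, then $\{t\}$ is a region of $T$, so $G[V_t]$ itself must be connected. More generally, $C_t$ minus finitely many child regions must stay connected. Your ``finite union of differences'' remark only bounds the boundary. In the example, the obvious repair of replacing the crossing pair by $A\setminus B$ and $B\setminus A$ leaves the root bag $\{s,z\}$, which spans no edge. So $z$ must be assigned to exactly one side, and doing this consistently for infinitely many pairwise crossing candidates is where the actual work lies.

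The paper does not reprove Theorem~\ref{DisplayKurkofka}; it quotes it from \cite{Kurkofka2022,TreeCutPartition}. For the analogous problem in the proof of Theorem~\ref{Main}, however, it does not expect nestedness for free. Regions are chosen one at a time in a transfinite recursion (Algorithm~\ref{ChoiceCi}), with nestedness with all earlier choices built into candidacy. The Uncrossing Lemma~\ref{SubmodularityCuts}, a revisit of Lemmas 7.5--7.6 of \cite{TreeCutPartition}, guarantees that imposing nestedness costs no boundary size.

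\textbf{A smaller point.} For uncountable $G$, consuming a well-order of $V(G)$ over $\omega$ levels does not give $\bigcap_n C_{t_n}=\emptyset$ along a branch. A reliable way to get it is to take $S$, when recursing into $C$, to be the endpoints in $C$ of $\delta(C)$. Consecutive cuts are then disjoint, and the finite-path argument from Lemma~\ref{WellDefinitionCt} applies.
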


Although stated in \cite{TreeCutPartition} for simple graphs, Theorem \ref{DisplayKurkofka} extends to multigraphs by a routine subdivision argument (subdivide each edge by a new vertex and pull back the resulting tree-cut decomposition). We may therefore freely apply it to multigraphs throughout.

\section{Displaying undominated edge-ends}\label{sec:TopVSComb}
\label{sec:edgedegrees}

In this section, we prove that Theorem \ref{Main} implies Theorems~\ref{Edgedegree} and \ref{EdgedegreeLocFin}.
A first immediate consequence of the definition of linkedness is the following lemma, showing how the edge-disjoint path systems provided by linkedness yield families of edge-disjoint rays converging into a displayed edge-end:

\begin{lemma}\label{LinkedDisplay}
    Let $\mathcal{T}=(T,\mathcal{V})$ be a linked and componental tree-cut decomposition of finite adhesion for a graph $G$ displaying a set of edge-ends $\Psi$.
    Suppose that $R:=t_0t_1t_2\dots$ is a ray in $T$ starting at its root, and that $n_0\in \mathbb{N}$ satisfies $|\delta(C_{t_m})|\geq |\delta(C_{t_{n_0}})|$ for every $m\geq n_0$.
    Then $C_{t_{n_0}}$ is boundary-linked to the edge-end $\omega :=\varphi_{\mathcal{T}}^{-1}([R]_E)$.
\end{lemma}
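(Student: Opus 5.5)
Set $k:=|\delta(C_{t_{n_0}})|$, which is finite by finite adhesion. For every $m>n_0$, the path $t_{n_0-1}t_{n_0}\dots t_m$ is increasing. If $n_0=0$, note that $\delta(C_{t_0})=\emptyset$ since $C_{t_0}=V(G)$, so $k=0$ and there is nothing to prove; hence assume $n_0\ge 1$. Its adhesion sets are $\delta(C_{t_{n_0}}),\dots,\delta(C_{t_m})$, and by hypothesis their minimum size is $k$. Linkedness therefore gives $k$ edge-disjoint paths $\mathcal{P}_m$ from $\delta(C_{t_{n_0}})$ to $\delta(C_{t_m})$. After trimming, we may assume each path starts with an edge of $\delta(C_{t_{n_0}})$ and ends with its first edge in $\delta(C_{t_m})$. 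Since there are exactly $k$ paths and $k$ edges in $\delta(C_{t_{n_0}})$, every such edge starts exactly one path.

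Next we confine the paths to $\overline{C_{t_{n_0}}}$. If a path leaves $C_{t_{n_0}}$ after its first edge, it uses a second edge of $\delta(C_{t_{n_0}})$. We can instead restart it at the last edge of $\delta(C_{t_{n_0}})$ it uses. The trimmed paths are still edge-disjoint, but they might start at repeated edges, which would lose the bijection. To avoid this, we run the argument from the slightly larger path $t_{n_0-1}t_{n_0}t_{n_0+1}\dots$ and accept some bookkeeping. A cleaner alternative is the following edge-Menger argument: it suffices to show that, in the finite-cut sense, $\delta(C_{t_{n_0}})$ cannot be separated from $\delta(C_{t_m})$ inside $\overline{C_{t_{n_0}}}$ by fewer than $k$ edges. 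Suppose $F$ is such a separator with $|F|<k$, lying inside $\overline{C_{t_{n_0}}}$. Then every $\mathcal{P}_m$-path must meet $F$ or exit $C_{t_{n_0}}$. A path that exits and re-enters uses at least two edges of $\delta(C_{t_{n_0}})$, which would leave fewer than $k$ available starting edges. Counting edges then shows some path avoids both $F$ and re-entry, a contradiction. Menger's theorem in $\overline{C_{t_{n_0}}}$ (a locally relevant, possibly infinite graph, where edge-Menger for finitely many terminals still holds) then yields $k$ edge-disjoint paths inside $\overline{C_{t_{n_0}}}$ starting at distinct edges of $\delta(C_{t_{n_0}})$ and ending in $\delta(C_{t_m})$.

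Finally we pass to rays by a compactness argument. For each $m$ choose such a system $\mathcal{P}_m$ inside $\overline{C_{t_{n_0}}}$. Restricting $\mathcal{P}_{m'}$ for $m'\ge m$ to initial segments ending at their first edge in $\delta(C_{t_m})$ gives a system of the same kind for $m$. Each $\delta(C_{t_m})$ is finite, but the segments themselves may be infinite in number, so König's lemma cannot be applied directly. The standard fix is to build the rays level by level. Given a system $\mathcal{Q}_m$ from $\delta(C_{t_{n_0}})$ to $\delta(C_{t_m})$ that extends to systems reaching arbitrarily deep levels, the linkedness between $\delta(C_{t_m})$ and $\delta(C_{t_{m'}})$ inside $\overline{C_{t_m}}$ (obtained by the same argument) gives $k$ edge-disjoint continuations. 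These lie in $\overline{C_{t_m}}$, so they are edge-disjoint from $\mathcal{Q}_m$ apart from the shared boundary edges. Concatenating them with $\mathcal{Q}_m$ and pairing the paths via the bijections on the boundary edges produces $\mathcal{Q}_{m+1}$ extending $\mathcal{Q}_m$. Possibly cyclic pairings are harmless, since each step just matches endpoints bijectively. The union over all $m$ yields $k$ edge-disjoint walks in $\overline{C_{t_{n_0}}}$ starting at the edges of $\delta(C_{t_{n_0}})$. Each walk crosses every $\delta(C_{t_m})$ and hence is infinite; since each $\delta(C_{t_m})$ is finite, it contains a ray with a tail in every $C_{t_m}$. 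Pruning these walks to rays keeps the edges edge-disjoint and the starting edges fixed.

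It remains to check that these rays lie in $\omega$. A ray with a tail in every $C_{t_m}$ has an edge-end that lives in the end $[R]$ of $T$, because the edges of $R$ are all oriented away from the root. Since $\mathcal{T}$ displays $\Psi$, exactly one edge-end lives in $[R]$, namely $\omega$, so all the rays belong to $\omega$. The main obstacle is the localisation step, which produces the path systems inside $\overline{C_{t_m}}$ starting bijectively at $\delta(C_{t_m})$. There I would rely on the edge-Menger cut argument above, using minimality of $|\delta(C_{t_m})|$ among the later adhesion sets and the fact that $G[C_{t_m}]$ is connected, which holds since $\mathcal{T}$ is componental.
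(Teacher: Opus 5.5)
\textbf{There is a genuine gap in your level-by-level continuation step.} To extend $\mathcal{Q}_m$, you invoke linkedness between $\delta(C_{t_m})$ and $\delta(C_{t_{m'}})$. You then claim, ``by the same argument'', a system inside $\overline{C_{t_m}}$ starting bijectively at the edges of $\delta(C_{t_m})$. As you say yourself at the end, this needs $|\delta(C_{t_m})|$ to be minimal among the later adhesion sets. The hypothesis guarantees this only for $m=n_0$.

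For an arbitrary $m>n_0$, linkedness of $t_{m-1}t_m\dots t_{m'}$ gives only $\mu:=\min\{|\delta(C_{t_i})|\colon m\le i\le m'\}$ paths. If $\mu<|\delta(C_{t_m})|$, two things go wrong:
\begin{itemize}
\item the paths start at some $\mu$-subset of $\delta(C_{t_m})$, which need not contain the $k$ edges where $\mathcal{Q}_m$ ended;
\item they may leave $C_{t_m}$ through unused boundary edges and re-enter, reusing edges of $\mathcal{Q}_m$.
\end{itemize}
For example, take boundary sizes $2,3,2,2,\dots$ from $n_0$ on. Then $\mathcal{Q}_{n_0+1}$ ends at two of the three boundary edges, while the two continuation paths may start at a different pair. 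So the pairing ``via the bijections on the boundary edges'' is not available. Your invariant that $\mathcal{Q}_m$ ``extends to systems reaching arbitrarily deep levels'' is also never established.

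\textbf{The missing idea is to work only at record levels, which is the paper's route.} Choose $n_0<n_1<n_2<\cdots$ with $|\delta(C_{t_{n_{j+1}}})|=\min_{i>n_j}|\delta(C_{t_i})|$; this minimum exists since the sizes are natural numbers. Then:
\begin{itemize}
\item each chosen level has boundary no larger than any later one;
\item the sizes $m_j:=|\delta(C_{t_{n_j}})|$ are non-decreasing;
\item linkedness applied to $t_{n_j-1}t_{n_j}\cdots t_{n_{j+1}}$ gives exactly $m_j$ edge-disjoint paths. By your own counting, these start bijectively at $\delta(C_{t_{n_j}})$ and lie in $\overline{C_{t_{n_j}}}$.
\end{itemize}
Since $m_j\le m_{j+1}$, the paths of one system end at distinct edges of the next boundary, and each of these starts exactly one path of the next system. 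Plain concatenation therefore yields the $k$ rays, and no compactness or K\H{o}nig-type argument is needed.

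Relatedly, your Menger detour for confinement is superfluous. Once $k$ edge-disjoint paths use all $k$ edges of $\delta(C_{t_{n_0}})$ as first edges, none can use a second boundary edge. So they already lie in $\overline{C_{t_{n_0}}}$.
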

\begin{proof}
    Choose inductively a strictly increasing sequence $n_0 < n_1 < n_2 < \cdots$ (starting with the given $n_0$) such that
    \[
        |\delta(C_{t_{n_{k+1}}})| \;=\; \min_{j>n_k}|\delta(C_{t_j})|;
    \]
    this minimum exists, since the set $\{|\delta(C_{t_j})|\colon j>n_k\}$ is a non-empty subset of $\mathbb{N}$. 
    Let us write $C_k:=C_{t_{n_k}}$ and $m_k:=|\delta(C_k)|$. The hypothesis on $n_0$ together with the above choice yields $m_0 \le m_1 \le m_2 \le \cdots$.

    By the linkedness assumption applied to the path $t_{n_k}t_{n_k+1}\cdots t_{n_{k+1}}$ in $T$, there is a family $\mathcal{P}_k$ of $\min\{|\delta(C_{t_i})|\colon n_k<i\le n_{k+1}\}$ edge-disjoint paths in $G$ connecting $\delta(C_k)$ to $\delta(C_{k+1})$. 
    Then each edge $e \in \delta(C_k)$ belongs to a unique path $P_e \in \mathcal{P}_k$, and $E(P_e)$ intersects $\delta(C_{k+1})$ precisely in its last edge, since $C_{k+1}\subseteq C_k$.
    So by concatenating suitable paths from $(\mathcal{P}_k)_{k \geq 1}$, this argument shows how $\bigcup_{k\geq 1}\bigcup \mathcal{P}_k$ contains a family $\mathcal{R}$ of $|\delta(C_0)|$ many edge-disjoint one-way paths in $\overline{C_0}$ starting at the edges from $\delta(C_0)$. In fact, for every $k\in\mathbb{N}$, the region $C_k$ contains a tail of each $R\in\mathcal{R}$ arising from this construction, which is then indeed an infinite subgraph of $G$. In other words, $\mathcal{R}$ is an edge-disjoint family of rays in $\overline{C_{t_{n_0}}}$ such that $\mathcal{R}\subseteq \omega:=\varphi_{\mathcal{T}}^{-1}([R]_E)$ by definition of $\varphi_{\mathcal{T}}$.
  \end{proof}

The following result clearly implies Theorem~\ref{Edgedegree}:

\begin{theorem}\label{DisplayEdgeDegrees}
Let $(T,\mathcal{V})$ be a componental and linked tree-cut decomposition of finite adhesion for a graph $G$ displaying a set of edge-ends $\Psi$.
The edge-degree of a given $\omega \in \Psi$ is given by $$\mathrm{deg}(\omega)=\liminf_{n\in\mathbb{N}}|\delta(C_{t_n})|,$$ where $R_\omega:=t_0t_1t_2\dots$ is the unique ray in $T$ starting at the root $t_0\in T$ that corresponds to $\omega$.
\end{theorem}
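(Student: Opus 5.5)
We prove the two inequalities $\mathrm{deg}(\omega)\leq \liminf_n|\delta(C_{t_n})|$ and $\mathrm{deg}(\omega)\geq \liminf_n|\delta(C_{t_n})|$ separately. Throughout, write $\ell:=\liminf_n|\delta(C_{t_n})|\in\mathbb{N}\cup\{\infty\}$.

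For the upper bound, assume $\ell<\infty$, since otherwise there is nothing to show. Then there are infinitely many $n$ with $|\delta(C_{t_n})|=\ell$. Suppose $\mathcal{R}\subseteq\omega$ is a family of edge-disjoint rays; we show $|\mathcal{R}|\leq\ell$. Let $\mathcal{R}'\subseteq\mathcal{R}$ be any finite subfamily. Because $\omega$ lives in $C_{t_n}$ for every $n$, each ray of $\mathcal{R}'$ has a tail in $C_{t_n}$. Since $\mathcal{R}'$ is finite, the union of the starting vertices of its rays is a finite set $S$. The decomposition displays $\omega$ along $R_\omega$, and $\bigcap_n C_{t_n}$ contains no vertex: any vertex $v$ lies in some bag $V_s$, and $s$ does not lie above every $t_n$. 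Hence we can pick $n$ with $|\delta(C_{t_n})|=\ell$ and $S\cap C_{t_n}=\emptyset$. Every ray of $\mathcal{R}'$ starts outside $C_{t_n}$ and has a tail inside, so it uses an edge of $\delta(C_{t_n})$. These edges are distinct by edge-disjointness, which gives $|\mathcal{R}'|\leq\ell$. As $\mathcal{R}'$ was an arbitrary finite subfamily, $|\mathcal{R}|\leq\ell$.

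For the lower bound we use Lemma~\ref{LinkedDisplay}. First suppose $\ell<\infty$. Then $|\delta(C_{t_m})|\geq\ell$ for all large $m$, and $|\delta(C_{t_m})|=\ell$ for infinitely many $m$. Choose $n_0$ with $|\delta(C_{t_{n_0}})|=\ell$ and $|\delta(C_{t_m})|\geq\ell$ for all $m\geq n_0$. Lemma~\ref{LinkedDisplay} then makes $C_{t_{n_0}}$ boundary-linked to $\omega$. This yields $\ell$ edge-disjoint rays in $\omega$, so $\mathrm{deg}(\omega)\geq\ell$.

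Now suppose $\ell=\infty$; we show $\mathrm{deg}(\omega)\geq k$ for every $k\in\mathbb{N}$. Since $|\delta(C_{t_m})|\to\infty$, the minimum $\min_{m\geq n}|\delta(C_{t_m})|$ exists for each $n$ and tends to infinity. So we can choose $n_0$ with $|\delta(C_{t_{n_0}})|=\min_{m\geq n_0}|\delta(C_{t_m})|\geq k$: let $n_1$ be such that $|\delta(C_{t_m})|\geq k$ for all $m\geq n_1$, and take $n_0\geq n_1$ attaining $\min_{m\geq n_1}|\delta(C_{t_m})|$. Lemma~\ref{LinkedDisplay} again gives $|\delta(C_{t_{n_0}})|\geq k$ edge-disjoint rays in $\omega$.

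The only point needing care is the upper bound, specifically the fact that every vertex is eventually separated from $\omega$ along $R_\omega$. We use that a vertex $v\in V_s$ does not belong to $C_{t_n}$ once $t_n\not\leq s$. Such an $n$ exists because $s$ has only finitely many predecessors in $T$, while the nodes $t_n$ are pairwise distinct. Everything else is bookkeeping with the definition of $\liminf$.
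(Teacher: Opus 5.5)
Your proposal is correct and follows essentially the same route as the paper. The upper bound counts how the rays of a finite subfamily cross a cut $\delta(C_{t_n})$ of size $\ell$ lying beyond their starting vertices, and the lower bound applies Lemma~\ref{LinkedDisplay} at an index attaining the tail minimum of $|\delta(C_{t_m})|$. Your case split $\ell<\infty$ versus $\ell=\infty$ unfolds the paper's single argument (taking the supremum over $n_0$ of $\min_{j>n_0}|\delta(C_{t_j})|$), and your justification that $\bigcap_n C_{t_n}=\emptyset$ makes explicit a step the paper leaves implicit.
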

\begin{proof}
    To reduce notation, we write $C_n :=C_{t_n}$ for each $n\in\mathbb{N}$; this is indeed a region in $G$ by the componental property of $(T,\mathcal{V})$. Set $\kappa:=\liminf_{n\in\mathbb{N}}|\delta(C_n)|$.

    We first argue that $\mathrm{deg}(\omega)\leq \kappa$. If $\kappa = \infty$ this is trivial, so assume $\kappa<\infty$, and suppose for a contradiction that $\mathrm{deg}(\omega)\geq \kappa + 1$. Fix a finite edge-disjoint family $\mathcal{R}\subseteq \omega$ of size $\kappa+1$, and choose $n_0\in\mathbb{N}$ large enough that $C_{n_0}$ contains no starting vertex of any ray from $\mathcal{R}$ and that $|\delta(C_j)|\geq \kappa$ for every $j> n_0$.
     Pick some $n>n_0$ with $|\delta(C_n)| = \kappa$. Each ray in $\mathcal{R}$ starts outside $C_n$ and has a tail in $C_n$ (since $\omega$ lives in $C_n$ by definition of $\varphi_{\mathcal{T}}$), so it uses at least one edge of $\delta(C_n)$; edge-disjointness of $\mathcal{R}$ then gives an injection from $\mathcal{R}$ into $\delta(C_n)$, yielding $\kappa+1 = |\mathcal{R}|\leq|\delta(C_n)|=\kappa$, a contradiction.

    For the reverse inequality, fix any $n_0\in\mathbb{N}$ and choose $n > n_0$ with $|\delta(C_n)| = \min_{j>n_0}|\delta(C_j)|$ (which exists as $|\delta(C_\cdot)|$ is $\mathbb{N}$-valued). Then $|\delta(C_n)|\leq |\delta(C_j)|$ for every $j\geq n$, so Lemma \ref{LinkedDisplay} applies and yields that $C_n$ is boundary-linked to $\omega$. In particular, there is an edge-disjoint family in $\omega$ of $|\delta(C_n)|$ many rays, whence $\mathrm{deg}(\omega)\geq |\delta(C_n)| = \min_{j>n_0}|\delta(C_j)|$. Taking the supremum over $n_0$ yields $\mathrm{deg}(\omega)\geq \liminf_{n\in\mathbb{N}} |\delta(C_n)| = \kappa$, as claimed.
\end{proof}

And our next lemma  clearly implies Theorem~\ref{EdgedegreeLocFin}:

\begin{lemma}\label{ConclusionsLocallyFinite}
    Let $(T,\mathcal{V})$ be a tree-cut decomposition of finite adhesion for a locally-finite connected graph $G$. If $(T,\mathcal{V})$ is componental and displays the (edge-)ends of $G$, then $T$ is locally-finite and each bag in $\mathcal{V}$ is finite. 
\end{lemma}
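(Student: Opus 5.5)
Both conclusions follow by combining local finiteness of $G$ with the finite adhesion and componental properties. The argument never uses linkedness.

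\emph{$T$ is locally finite.} Fix $t\in T$. Its neighbours are its predecessor (if $t$ is not the root) and the elements of $\mathrm{succ}(t)$, so it suffices to show that $\mathrm{succ}(t)$ is finite. By componentality, each $C_s$ with $s\in\mathrm{succ}(t)$ is non-empty, and the sets $C_s$ are pairwise disjoint subsets of $C_t$. Suppose first that $V(G)\setminus C_s\neq\emptyset$. Since $G$ is connected, $\delta(C_s)\neq\emptyset$. Also $\delta(C_s)=F_{ts}$, and every edge of it has its outer endpoint either in $C_t\setminus C_s$ or in $V(G)\setminus C_t$. In the second case the edge lies in $\delta(C_t)$, which is finite by finite adhesion (or empty if $t$ is the root). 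So all but finitely many $s\in\mathrm{succ}(t)$ have an edge from $C_s$ into $C_t\setminus\bigcup_{s'}C_{s'}\cup\bigcup_{s'\neq s}C_{s'}$. Edges between distinct $C_s,C_{s'}$ with $s,s'\in\mathrm{succ}(t)$ are impossible: such an edge would lie in $F_{ts}$, and since $G[C_t]$ is connected, this forces a path through $V_t$ or $\delta(C_t)$.

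Therefore each of these $s$ sends an edge to $V_t$ or to $\delta(C_t)$. If $V_t$ is finite, local finiteness of $G$ gives only finitely many such edges, and hence finitely many successors. This reduces local finiteness of $T$ to finiteness of the bags. The edge case where $C_s=V(G)$ occurs for at most one $s$.

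\emph{Bags are finite.} This is the main step. Suppose $V_t$ is infinite. Since $G[C_t]$ is connected, locally finite and infinite, König's lemma gives a ray; I will instead build a ray that meets $V_t$ infinitely often, or find infinitely many successors hit. Consider the finite graph structure: contract each $C_s$, $s\in\mathrm{succ}(t)$, to a vertex $v_s$, and contract $V(G)\setminus C_t$ to a single vertex. The degree of each contracted vertex is $|\delta(C_s)|$, which is finite. The resulting graph $H$ is connected, because $G$ and each $G[C_s]$ are connected, and $H$ is locally finite. It contains the infinite set $V_t$, so by König's lemma $H$ has a ray $Q$ with infinitely many vertices in $V_t$; one can choose a ray inside the infinite connected locally finite graph $H$ and pass through $V_t$, since only the finitely many vertices adjacent to each contracted vertex matter. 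Lift $Q$ to a ray $R$ of $G$ by replacing each passage through some $v_s$ with a path inside $G[C_s]$. The ray $R$ meets infinitely many vertices of $V_t$.

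Let $\omega=[R]_E$. The ray $R$ lies in $C_t$ apart from finitely many excursions. Every finite edge set $F_{ts'}$ is crossed only finitely often, since $R$ visits each $C_s$ in a finite segment. Hence $\omega$ lives in none of the $C_s$ and in $C_t$, so $\varphi(\omega)=t$, a node rather than an end of $T$. But in a locally finite graph every end is an undominated edge-end, so $\omega$ must be displayed, that is $\varphi(\omega)\in\Omega(T)$. This is a contradiction.

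The main obstacle is producing $R$ correctly. I must ensure that the ray in $H$ uses each contracted vertex at most once, which holds since it is a path. If the ray uses only finitely many vertices of $V_t$, then a tail of it would run through contracted vertices alone. That is impossible, because contracted vertices $v_s$ are pairwise non-adjacent in $H$. So a ray in $H$ automatically meets $V_t$ infinitely often whenever it exists, and its existence follows from $H$ being infinite, connected and locally finite. With the bags finite, the successor bound from the first paragraph completes the proof.
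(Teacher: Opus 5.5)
Your proposal has a genuine gap: the claim that no edge of $G$ joins $C_s$ and $C_{s'}$ for distinct $s,s'\in\mathrm{succ}(t)$ is false. Nothing in the definition of a tree-cut decomposition forbids edges between sibling parts; such an edge simply lies in both $F_{ts}$ and $F_{ts'}$. For example, take a triangle $abc$, with $V_r=\{a\}$ at the root and two leaves carrying $\{b\}$ and $\{c\}$. Then the edge $bc$ joins two siblings, and this decomposition is componental.

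Your first paragraph relies on this claim to reduce finiteness of $\mathrm{succ}(t)$ to finiteness of $V_t$, using only local finiteness, finite adhesion and componentality. That reduction is false without the displaying hypothesis. Let $G$ be the ray $v_0v_1v_2\dots$ and $T$ an infinite star with $V_r=\{v_0\}$ at the centre and leaves $s_i$ with $V_{s_i}=\{v_i\}$. This decomposition is componental, has finite adhesion and finite bags, yet $T$ is not locally finite. Only $s_1$ touches $V_r$; all other successors are attached only to siblings via the edges $v_iv_{i+1}$.

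You use the same false claim to argue that the ray $Q$ meets $V_t$ infinitely often. There it is harmless, because your contradiction only needs $\omega$ to live in $C_t$ but in no $C_s$. That holds for the lift of any ray of $H$ avoiding the vertex representing $V(G)\setminus C_t$.

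The fix is to drop the first paragraph and run your second argument without assuming $V_t$ is infinite. If $V_t$ or $\mathrm{succ}(t)$ is infinite, then $H$ is infinite, connected and locally finite, so K\H{o}nig's Lemma gives a ray. Take a tail avoiding the outer vertex; you should not lift through that vertex anyway, since $G-C_t$ need not be connected. Lift the tail through the connected sets $C_s$ to get a ray whose end satisfies $\varphi(\omega)=t$. This contradicts the assumption that all ends are displayed. Hence the contracted graph is finite, which yields both conclusions at once. This is exactly the paper's proof; it contracts inside $G[C_t]$, so the outer vertex never appears.
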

\begin{proof}
    Fix $t\in T$. Let $\tilde{C}_t$ be the graph on vertex set $V_t \cup \{v_s\colon s\in \mathrm{succ}(t)\}$ obtained from $G[C_t]$ by contracting each region $C_s$ ($s\in \mathrm{succ}(t)$) to a single vertex $v_s$. Under this identification, we then have $E(\tilde{C}_t)\subseteq E(G)$. Each $v_s$ has degree $\deg_{\tilde{C}_t}(v_s) \leq |\delta(C_s)| < \infty$ by finite adhesion, and each $v\in V_t$ satisfies $\deg_{\tilde{C}_t}(v)\leq \deg_G(v)<\infty$ by local finiteness of $G$; hence $\tilde{C}_t$ is locally finite. Moreover, $\tilde{C}_t$ is connected, being a contraction of the graph $G[C_t]$ (which is connected by the componental property of $(T,\mathcal{V})$).

    We claim that $\tilde{C}_t$ is finite. Suppose otherwise and, by K\H{o}nig's Lemma, $\tilde{C}_t$ contains a ray $\tilde{R} := v_0 v_1 v_2 \dots$ Its edges $e_n := v_nv_{n+1}$ ($n\geq 0$) all lie in $E(G)$; we pull $\tilde{R}$ back to a ray in $G$ as follows. Write $e_n = x_n y_n\in E(G)$, with the convention that $x_n$ represents $v_n$ and $y_n$ represents $v_{n+1}$ (so $x_n = v_n$ if $v_n\in V_t$, and $x_n\in C_s$ if $v_n = v_s$; analogously for $y_n$). For each $n\geq 1$, set $P_n:=\{v_n\}$ if $v_n\in V_t$; if $v_n = v_s$ for some $s\in \mathrm{succ}(t)$, then $y_{n-1}, x_n\in C_s$, and the connectedness of $C_s$ (componental property) provides a path $P_n\subseteq C_s$ from $y_{n-1}$ to $x_n$. The concatenation
    \[
        R \;:=\; e_0\,P_1\,e_1\,P_2\,e_2\,\dots
    \]
    is then a ray in $G$ contained in $C_t$ and satisfying $\{e_n\colon n\geq 0\}\subseteq E(R)$.

    Set $\omega := [R]_E \in \Omega_E(G)$. Since $(T,\mathcal{V})$ displays all edge-ends of $G$ and $R\subseteq C_t$, the ray $R_\omega$ in $T$ with $\varphi_{\mathcal{T}}(\omega) = [R_\omega]_E$ passes through $t$. Let $s$ be the successor of $t$ along $R_\omega$. Then $\omega$ lives in $C_s$, so $R$ has a tail in $\overline{C_s}$ and, by finite adhesion, all but finitely many edges of $R$ lie in $E(C_s)$. On the other hand, each $e_n$ has its two endpoints in distinct blocks of the partition $C_t = V_t\sqcup \bigsqcup_{s'\in \mathrm{succ}(t)}C_{s'}$; in particular, $e_n\notin E(C_{s'})$ for any $s'\in\mathrm{succ}(t)$. Since $R$ contains infinitely many edges $e_n$, none of which lie in $E(C_s)$, this contradicts the previous assertion.

    Hence $\tilde{C}_t$ is finite. Moreover, $V_t\subseteq V(\tilde{C}_t)$ is finite and $|\mathrm{succ}(t)| = |\{v_s\colon s\in \mathrm{succ}(t)\}|\leq |V(\tilde{C}_t)|$ is also finite. Together with the edge to $t$'s parent (if any), this gives $t$ finite degree in $T$. As $t\in T$ was arbitrary, $T$ is locally finite and each bag in $\mathcal{V}$ is finite.
\end{proof}

\section{Boundary linked regions and submodularity}
\label{sec:boundarylinked}

In this section, we begin developing the tools needed for the proof of our main result, Theorem~\ref{Main}.
The following lemma shall be often applied throughout this paper and reads as an extension of a previous Lemma 10 from \cite{BruhnStein} regarding locally-finite graphs:

 \begin{lemma}\label{CharacterizationBoundaryLinked}
     A proper region $C$ in a connected graph $G$ is boundary-linked to some undominated edge-end $\omega \in \Omega_E(G)$ if, and only if, we have $|\delta(D)|\geq |\delta(C)|$ for every other region $D$ included in $C$ and in which $\omega$ lives.
 \end{lemma}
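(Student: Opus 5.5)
The forward implication is a counting argument; the backward implication is an edge-Menger argument iterated along a nested sequence of regions, as in the proof of Lemma~\ref{LinkedDisplay}. Throughout, $\omega$ is assumed to live in $C$, as the statement implicitly requires.

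\emph{Forward direction.} Let $\mathcal{R}\subseteq\omega$ be a family of $|\delta(C)|$ edge-disjoint rays in $\overline{C}$, each starting with an edge of $\delta(C)$. Let $D\subseteq C$ be a region in which $\omega$ lives. Each $R\in\mathcal{R}$ starts at a vertex of $N(C)$, which lies outside $C\supseteq D$. Each $R$ also has a tail in $D$, so it uses some edge of $\delta(D)$. Edge-disjointness therefore gives an injection $\mathcal{R}\to\delta(D)$, and hence $|\delta(D)|\ge|\delta(C)|$.

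\emph{Backward direction.} The main tool is edge-Menger between two finite edge cuts.
\begin{itemize}
\item Given regions $D'\subseteq D$ in which $\omega$ lives, contract $V(G)\setminus D$ to a vertex $s$ and $D'$ to a vertex $t$.
\item Any finite $s$--$t$ edge cut $F$ determines the component $D''$ of $G-F$ containing $D'$. This $D''$ is a region inside $D$ with $\delta(D'')\subseteq F$, and $\omega$ lives in it.
\item So the hypothesis, applied to $D''$, bounds the $s$--$t$ min cut from below. Menger (finite cuts suffice in infinite graphs) then gives that many edge-disjoint paths from $\delta(D)$ to $\delta(D')$ inside $\overline{D}$.
\end{itemize}

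I would then build regions $C=D_0\supseteq D_1\supseteq D_2\supseteq\cdots$, each with $\omega$ living in it, as follows.
\begin{itemize}
\item Let $X_k$ be the finite set of vertices used by the path systems built so far. Since $\omega$ is undominated, every $v\in X_k$ can be separated from $\omega$ by a finite edge set.
\item Hence there are regions inside $D_k$ that avoid $X_k$ and in which $\omega$ lives. Choose $D_{k+1}$ among them with $|\delta(D_{k+1})|$ minimal.
\item Minimality makes $m_k:=|\delta(D_k)|$ non-decreasing for $k\ge1$, and the hypothesis gives $m_k\ge|\delta(C)|$ for all $k$.
\item The Menger step then links all of $\delta(D_k)$ into $\delta(D_{k+1})$ by $m_k$ edge-disjoint paths. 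Between $D_0=C$ and $D_1$, it links all of $\delta(C)$.
\end{itemize}
Concatenating these path systems exactly as in Lemma~\ref{LinkedDisplay} yields $|\delta(C)|$ edge-disjoint one-way infinite paths in $\overline{C}$, starting at the edges of $\delta(C)$. Each one has a tail in every $D_k$. Because $D_{k+1}$ avoids $X_k$, each vertex is used only finitely often, so these are genuine rays.

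\emph{Main obstacle.} The hardest step is proving that each constructed ray $R$ lies in $\omega$, i.e.\ that no finite edge set $F$ separates a tail of $R$ from a ray $Q\in\omega$. The plan is as follows.
\begin{itemize}
\item Suppose such an $F$ exists, and let $K$ be the component of $G-F$ containing a tail of $Q$.
\item Apply the Menger step between $D_k$ and the region $K\cap D_k$ (or its component where $\omega$ lives), for $k$ large.
\item The minimality in the choice of $D_{k+1}$, together with $m_k\to\liminf$, should force the linking paths to enter $K$ through $F$ infinitely often. Since $F$ is finite, this is a contradiction.
\end{itemize}
If this does not close directly, the fallback is to fix $Q\in\omega$ in advance. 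I would then additionally require $D_{k+1}$ to avoid the first $k$ vertices of $Q$ and the endpoints of the finitely many edges chosen so far, using undominatedness each time, so that any finite $F$ eventually lies outside $E(D_k)$. The connected set $D_k$ then contains tails of both $R$ and $Q$ and misses $F$, which contradicts the assumption that $F$ separates them.
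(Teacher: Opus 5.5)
\textbf{Gap: you never prove that the concatenated rays lie in $\omega$.} Your forward direction is correct. The architecture of your backward direction also matches the paper's: nested minimum-boundary regions $D_k$, edge-Menger between consecutive ones, and concatenation as in Lemma~\ref{LinkedDisplay}. The gap sits exactly at the step you flag as the main obstacle. Your first plan (``should force the linking paths to enter $K$ through $F$ infinitely often'') gives no mechanism. In the fallback, the assertion that every finite $F$ eventually lies outside $E(D_k)$ does not follow from making $D_{k+1}$ avoid initial vertices of $Q$ and previously chosen edges. An edge of $F$ far from $Q$ and from all chosen paths could, as far as those conditions show, remain inside every $D_k$.

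\textbf{The missing idea.} What you need is $\bigcap_k D_k=\emptyset$, obtained from connectivity of $G$ together with the cuts $\delta(D_k)$ being pairwise disjoint, or at least each edge lying in boundedly many of them. Pick $v\in\bigcap_k D_k$ and a finite path from $v$ to a vertex outside $C$; this exists because $C$ is proper and $G$ is connected. That path must meet every $\delta(D_k)$. It cannot, if each of its finitely many edges lies in only boundedly many of the cuts. The paper forces this by choosing $\delta(C_{n+1})$ as a minimum edge set separating $\omega$ from the inner endpoints of $\delta(C_n)$, which makes the cuts pairwise disjoint.

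\textbf{Your construction already supplies this.} Every edge of $\delta(D_k)$ is the first edge of a path in $\mathcal{P}_k$. So both its endpoints lie in $X_{k+1}$, and $D_{k+2}$ contains neither of them. Hence $\delta(D_j)\cap\delta(D_k)=\emptyset$ whenever $|j-k|\ge 2$, every edge lies in at most two of the cuts, and the path argument gives $\bigcap_k D_k=\emptyset$. Now, given a finite $F$, choose $k$ so that no endpoint of an edge of $F$ lies in $D_k$. The connected graph $G[D_k]$ then contains no edge of $F$ but contains tails of your ray $R$ and of any $Q\in\omega$. So $F$ does not separate them, and $R\in\omega$.

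\textbf{A smaller point.} For $k\ge 1$, the lower bound $m_k$ on the $s$--$t$ cut does not come from the hypothesis, which only gives $|\delta(C)|$. You need $m_k$ to link \emph{all} of $\delta(D_k)$ into $\delta(D_{k+1})$, and hence to continue the specific edges that carry your rays. That bound comes from the minimal choice of $D_k$. Apply it to the intermediate region $D''\subseteq D_k$: it avoids $X_{k-1}$, so it was a competitor when $D_k$ was chosen.
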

 \begin{proof}
     If $C$ is boundary-linked to $\omega$, then there is a family $\mathcal{R}\subseteq \omega$ of $|\delta(C)|$ many edge-disjoint rays in $\overline{C}$ starting at the edges from $\delta(C)$. If $\omega$ lives in another region $D$ included in $C$, then each ray of $\mathcal{R}$ must intersect $\delta(D)$ as well, since $\delta(D)$ separates $\omega$ and the vertices from $N(C)$. Hence, $|\delta(D)|\geq |\mathcal{R}| = |\delta(C)|$ because the elements of $\mathcal{R}$ are pairwise edge-disjoint.

     Conversely, suppose that $|\delta(D)|\geq |\delta(C)|$ for every region $D$ included in $C$ and in which $\omega$ lives. Incidentally, as verified by $\delta(C)$ itself, there is a finite set of edges $F_1\subseteq E(\overline{C})$ separating the vertices of $S_1:=N(C)$ from $\omega$. Choosing $|F_1|$ to be minimum with this property, we can even write $F_1=\delta(C_1)$ for the connected component $C_1$ of $C\setminus F_1$ where $\omega$ lives. In particular, we still have $|\delta(C_1)|\geq |\delta(C)|$ by hypothesis. Setting $C_0:= C$, fix $n\in\mathbb{N}$ and, for every $1\leq i\leq n$, suppose already defined a region $C_i\subseteq C_{i-1}$ in which $\omega$ lives but such that $F_i:=\delta(C_i)$ is a sizewise minimum set of edges separating $\omega$ from some previous $S_i\subseteq C_{i-1}$. Then, denote $S_{n+1}:=\{v\in C_n: v\text{ is an endpoint of an edge in }\delta(C_n)\}$ and let $F_{n+1}\subseteq E(\overline{C_n})$ be also sizewise minimum for separating $\omega$ from the vertices of $S_{n+1}$. In particular, once the elements of $S_{n+1}$ do not edge-dominate $\omega$ by hypothesis, $F_{n+1}$ is indeed finite. Furthermore, the minimality of $|F_{n+1}|$ allows us again to write $F_{n+1}=\delta(C_{n+1})$ for the connected component $C_{n+1}$ of $C_n\setminus F_{n+1}$ where $\omega$ lives. In its turn, since $F_{n+1}$ also separates $\omega$ from $S_n$, we have $|F_{n+1}|\geq |F_n|$ by the sizewise minimality defining $F_n$. Similarly, the choice of $S_{n+1}$ implies that $\delta(C_{n+1})\cap \delta(C_n) = \emptyset$ and, then, that $\delta(C_{n+1})\cap \delta(C_i)=\emptyset$ for every $i<n$.

     At the end of this recursive process, we shall have $\bigcap_{n\in\mathbb{N}}C_n = \emptyset$: otherwise, since $G$ is connected, there would exist a finite path $P$ connecting some $v\in \bigcap_{n\in\mathbb{N}}C_n$ to some $u\in S_1$. In this case, $E(P)$ should intersect $\delta(C_n)$ for every $n\geq 1$, because $v\in C_n\subseteq C$ and $S_1\subseteq V(G)\setminus C$. This would then contradict the fact that $\{\delta(C_n)\}_{n\in\mathbb{N}}$ comprises infinitely many pairwise disjoint sets of edges. Hence, when defining $V_{t_{0}}:=V(G)\setminus C$ and $V_{t_n}:=C_{n-1}\setminus C_{n}$ for every $n\geq 1$, the ray $R:=t_0t_1t_2\dots $ fits as an index tree for a componental tree-cut decomposition $\mathcal{V}:=\{V_{t}:t\in R\}$ of finite adhesion for $G$. In its turn, given a pair $1\leq n<m$, we also have $|\delta(C_{t_n})| = |\delta(C_{n-1})| = \min \{|\delta(C_{i-1})|: n \leq i \leq m\} = \min\{|\delta(C_{t_i})|: n \leq i \leq m\}$. Now, the sizewise minimality of $F_{n-1} = \delta(C_{n-1})$ for separating $S_{n-1}$ from $\omega$ ensures that $F_{n-1}$ has  minimum size while separating $S_{n-1}$ and $C_{m-1}$ as well. Indeed, since $C_{m-1}$ is connected, contains representatives of $\omega$ and is disjoint from $S_{n-1}$ by construction, any set of edges separating $S_{n-1}$ from $C_{m-1}$ also separates it from $\omega$. To summarize, by Menger's theorem there are $|\delta(C_{t_n})|$ many edge-disjoint paths in $G$ connecting the edges of $\delta(C_{t_n}) = \delta(V(G)\setminus C_{n-1})$ to those of $\delta(C_{t_m}) = \delta(C_{m-1})$. Hence, $\mathcal{V}:=\{V_{t}: t\in R\}$ describes a linked and componental tree-cut decomposition for $G$ displaying the edge-end $\omega$, which is thus the only one living in $C_{t_n}$ for every $n\geq 1$. Finishing the proof, Lemma \ref{LinkedDisplay} claims that $C = C_0 = C_{t_1}$ is boundary-linked to $\omega$.    
 \end{proof}
 
 We will not strictly need the following corollary for our later constructions, but it is perhaps still interesting.
 
 \begin{corollary}[\cite{BruhnStein}, Lemma 10]\label{MengerOneEnd}
     For every undominated edge-end $\omega \in \Omega_E(G)$ in a connected graph $G$ and every non-empty finite set of vertices $S\subseteq V(G)$, the maximum size of a family $\mathcal{R}\subseteq \omega$ of edge-disjoint rays starting at $S$ is equal to the minimum size of a finite set of edges $F\subseteq E(G)$ separating $S$ from $\omega$.
 \end{corollary}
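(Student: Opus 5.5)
The plan is to reduce Corollary~\ref{MengerOneEnd} to Lemma~\ref{CharacterizationBoundaryLinked} by contracting $S$ to a single vertex and applying the lemma to a minimum separating region.

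\textbf{Easy inequality.} First we show that the maximum is at most the minimum. Let $F$ be finite and separate $S$ from $\omega$, and let $\mathcal{R}\subseteq\omega$ be edge-disjoint rays starting in $S$. Each ray has a tail in $C_E(F,\omega)$ but starts outside it, so it uses an edge of $F$. Edge-disjointness then gives an injection $\mathcal{R}\to F$, hence $|\mathcal{R}|\le|F|$.

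\textbf{Choosing a minimum separator.} Since $\omega$ is undominated, no vertex of the finite set $S$ edge-dominates it. Taking the union of separators for the individual vertices gives a finite set of edges separating $S$ from $\omega$, so a minimum-size such $F$ exists; let $k:=|F|$.

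Set $C:=C_E(F,\omega)$. Then $\delta(C)\subseteq F$ and $\delta(C)$ still separates $S$ from $\omega$, since every vertex of $S$ lies outside $C$. Minimality therefore forces $F=\delta(C)$. So $C$ is a proper region, disjoint from $S$, with $|\delta(C)|=k$, and $\omega$ lives in $C$.

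\textbf{Applying Lemma~\ref{CharacterizationBoundaryLinked}.} Let $D\subseteq C$ be any region in which $\omega$ lives. Then $D$ is disjoint from $S$, so $\delta(D)$ separates $S$ from $\omega$, and minimality of $k$ gives $|\delta(D)|\ge k=|\delta(C)|$. The lemma now yields a family $\mathcal{R}'\subseteq\omega$ of $k$ edge-disjoint rays in $\overline{C}$, one starting at each edge of $\delta(C)$.

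\textbf{Extending back to $S$.} These rays start at the edges of $\delta(C)$ but not necessarily at vertices of $S$, and this step is the main obstacle. Fix $k$ edge-disjoint paths from $S$ to the edges of $\delta(C)$ that avoid $E(C)$ except at their final edges.

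To get them, consider the finite-type problem in the graph $H$ obtained from $G$ by deleting $E(C)$, contracting $C$ to a vertex $c$ and contracting $S$ to a vertex $s$. Any $s$--$c$ edge cut in $H$ of size less than $k$ pulls back to a set of edges of $G$ separating $S$ from $C$. Because $C$ is connected and contains a tail of every ray of $\omega$, such a set also separates $S$ from $\omega$, contradicting minimality of $k$.

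By Menger's theorem for edges (applicable since $\deg_H(c)=k$ is finite), $H$ contains $k$ edge-disjoint $s$--$c$ paths. Each of them ends with a distinct edge of $\delta(C)$.

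\textbf{Concatenation and cleanup.} Concatenate each such path with the ray of $\mathcal{R}'$ beginning at its final edge. The path parts lie outside $E(C)$ and the ray parts lie inside $\overline{C}$, sharing only the edge of $\delta(C)$ where they meet, so the resulting walks are pairwise edge-disjoint. Each walk may revisit vertices; shortcut it to a ray, choosing its start as the last vertex of $S$ it visits. This gives $k$ edge-disjoint rays in $\omega$ starting in $S$, which matches the upper bound from the first step.
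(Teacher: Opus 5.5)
Your proposal is correct and follows essentially the same route as the paper: you take a minimum separator $F$, note that $C=C_E(F,\omega)$ satisfies $\delta(C)=F$, and use Lemma~\ref{CharacterizationBoundaryLinked} to make $C$ boundary-linked to $\omega$. Menger's theorem then supplies $|F|$ edge-disjoint $S$--$C$ paths, and concatenating these with the boundary-linked rays gives the required family. You also spell out details the paper leaves implicit, namely the existence of a finite separator from undominatedness, the verification of the lemma's hypothesis via minimality of $|F|$, and the auxiliary graph $H$ used to apply Menger.
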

 \begin{proof}
    Once the rays from $\mathcal{R}$ are pairwise edge-disjoint and must intersect $F$, we clearly have $|\mathcal{R}|\leq |F|$. On the other hand, by the above Lemma \ref{CharacterizationBoundaryLinked}, the choice of $F$ implies that $C:=C_E(F,\omega)$ is boundary-linked to $\omega$ and that $\delta(C) = F$. Hence, there is a family $\mathcal{R}'\subseteq \omega$ comprising $|\delta(C)| = |F|$ many edge-disjoint rays in $\overline{C}$ starting at the edges from $F$. Similarly, again relying on Menger's theorem, the sizewise minimality of $F$ also provides a family $\mathcal{P}$ of $|F|$ many edge-disjoint paths in $G$ connecting $C$ to the vertices of $S$. Of course, we may ask to each $P\in \mathcal{P}$ intersect $S$ and $C$ precisely in its endpoints. Hence, as every edge $e\in F$ lies on exactly one path in $\mathcal{P}$ and one ray in $\mathcal{R}'$, we can extract from $\bigcup\mathcal{P}\cup \bigcup\mathcal{R}'$ a family of $|F|$ many edge-disjoint rays in $G$ belonging to $\omega$ and starting at the vertices of $S$. Therefore, $|F| = |\mathcal{R}|$ by the sizewise maximality of $\mathcal{R}$.   
 \end{proof}

\begin{lemma}[Uncrossing lemma]
\label{SubmodularityCuts}
    Let $C$ be a proper region of $G$, let $\mathcal{D}$ be a family of pairwise disjoint regions contained in $C$, each boundary-linked to some undominated edge-end, and let $K\subseteq C$ be a region boundary-linked to an undominated edge-end $\omega_K$ that lives in no $D\in\mathcal{D}$. Then there is a region $C'\subseteq C$ (possibly $C'=K$) that is boundary-linked to $\omega_K$, nested with every $D\in\mathcal{D}$, and satisfies $|\delta(C')|\leq|\delta(K)|$.
\end{lemma}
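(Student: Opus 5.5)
Take $C'$ to be the \emph{minimizer} of a uniform cut problem and then show that any crossing can be removed by submodularity of the edge-cut function. Let $\mathcal{K}$ be the set of regions $K'\subseteq K$ in which $\omega_K$ lives. Since $K$ is boundary-linked to $\omega_K$, Lemma~\ref{CharacterizationBoundaryLinked} gives $|\delta(K')|\geq|\delta(K)|$ for every $K'\in\mathcal{K}$. Note also that $\mathcal{K}\ni K$ and cut sizes are natural numbers.

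\textbf{Step 1: choice of $C'$.} Among all regions $K'\subseteq K$ in which $\omega_K$ lives, with $|\delta(K')|=|\delta(K)|$, choose one that crosses the fewest members of $\mathcal{D}$. This needs care because $\mathcal{D}$ may be infinite, but only finitely many $D$ can meet $\delta(K')$ nontrivially. Indeed, $D$ crosses $K'$ only if $\delta(K')$ has an edge inside $\overline{D}$, or $\delta(D)$ meets $K'$ in a way that forces an edge of $\delta(K')$ into $E(D)\cup\delta(D)$, using connectivity of both. The $D$ are disjoint and $\delta(K')$ is finite, so only finitely many $D$ cross. Any such $K'$ is boundary-linked to $\omega_K$, by Lemma~\ref{CharacterizationBoundaryLinked} applied to $K'$: every region inside $K'$ where $\omega_K$ lives is in $\mathcal{K}$, hence has cut at least $|\delta(K)|=|\delta(K')|$. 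So it suffices to show the minimizer crosses no $D$. If instead one wants the conclusion literally with $C'\subseteq C$ rather than $C'\subseteq K$, the same argument works with $K\subseteq C$.

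\textbf{Step 2: uncrossing a single $D$.} Suppose $K'$ crosses some $D\in\mathcal{D}$. Since $\omega_K$ does not live in $D$, I would replace $K'$ by the component $K''$ of $G[K'\setminus D]$ in which $\omega_K$ lives. Submodularity gives
\[
|\delta(K'\setminus D)|+|\delta(K'\cup D)|\leq|\delta(K')|+|\delta(D)|,
\]
via the posimodular form $|\delta(A\setminus B)|+|\delta(B\setminus A)|\leq|\delta(A)|+|\delta(B)|$. The key input is the bound $|\delta(D\setminus K')|\geq|\delta(D)|$. This comes from $D$ being boundary-linked to some undominated $\omega_D$, which lives in $D$ and hence, by finiteness of cuts, either in $D\cap K'$ or in $D\setminus K'$.

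\textbf{Step 3: the two cases.} If $\omega_D$ lives in (a component of) $D\setminus K'$, Lemma~\ref{CharacterizationBoundaryLinked} gives $|\delta(D\setminus K')|\geq|\delta(D)|$ after passing to the relevant component, whose cut is at most that of the whole set. Posimodularity then yields $|\delta(K'\setminus D)|\leq|\delta(K')|$, so the component $K''$ has cut at most $|\delta(K)|$. By the lower bound from $\mathcal{K}$ this is an equality, and $K''$ no longer crosses $D$. If instead $\omega_D$ lives in $D\cap K'$, swap roles: use submodularity with $|\delta(D\cap K')|\geq|\delta(D)|$ to get $|\delta(K'\cup D)|\leq|\delta(K')|$. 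But $K'\cup D$ need not lie in $K$, so I would rather use the union bound against $C$, allowing $C'\subseteq C$ and using the lemma's lower bound only for regions inside $K'$. This is the step I expect to be delicate: verifying that $\omega_K$ lives in the new region, that it stays inside $C$, and that $|\delta|$ still cannot drop below $|\delta(K)|$.

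The main obstacle is the monotonicity of the potential. After uncrossing $D$, the new region must not cross other members of $\mathcal{D}$ that it did not cross before. Since the members of $\mathcal{D}$ are pairwise disjoint, removing $D$ or adding $D$ only changes the relation with $D$ itself. A member nested with $K'$ (disjoint from it or contained in it) stays nested with $K'\setminus D$ and with $K'\cup D$. Passing to a component preserves nestedness. So the crossing count strictly drops, contradicting the minimality from Step 1.
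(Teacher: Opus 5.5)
\textbf{Gap: the case where $\omega_D$ lives in $D\cap K'$.}
Your first case is sound. When $\omega_D$ lives in $D\setminus K'$, the component $K''$ of $K'\setminus D$ containing $\omega_K$ lies in $K$. The lower bound coming from $K$ then forces $|\delta(K'')|=|\delta(K)|$ and makes $K''$ boundary-linked, and your nestedness bookkeeping shows the crossing count drops. (The display in Step~2 pairing $K'\setminus D$ with $K'\cup D$ is not a valid inequality in general, but the posimodular form you then cite is the right one.)

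The other case, where $\omega_D$ lives in $D\cap K'$, is left open. There the only available uncrossing is $K'\cup D$, and leaving $K$ can be unavoidable. For instance, if many edges join $K\cap D$ to $K\setminus D$, every subregion of $K$ in which $\omega_K$ lives and which is nested with $D$ has boundary larger than $|\delta(K)|$. This is why the lemma only promises $C'\subseteq C$.

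But $K'\cup D$ is outside the class you minimise over in Step~1: it leaves $K$, and its boundary may be strictly smaller than $|\delta(K)|$. So no contradiction with your choice arises. More seriously, your only route to boundary-linkedness is that every region inside $K'$ in which $\omega_K$ lives sits inside $K$ and so has boundary at least $|\delta(K)|$. That argument says nothing about subregions of $K'\cup D$. Proving that $K'\cup D$ is boundary-linked to $\omega_K$ is the real content of this case, and it is missing.

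The points you flag as delicate are not the difficulty. $\omega_K$ lives in $K'\subseteq K'\cup D$, and $K'\cup D\subseteq C$ because $K,D\subseteq C$. The boundary is allowed to drop below $|\delta(K)|$, since the lemma only asks for $|\delta(C')|\le|\delta(K)|$.

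\textbf{How to close it.}
Minimise the number of crossed members of $\mathcal{D}$ over all regions $K'\subseteq C$ that are boundary-linked to $\omega_K$ with $|\delta(K')|\le|\delta(K)|$; this class contains $K$. Your first case still works in this class: $K''\subseteq K'$ gives $|\delta(K'')|=|\delta(K')|$, and every subregion of $K''$ is a subregion of $K'$.

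In the second case, you must show that $K'\cup D$ is boundary-linked to $\omega_K$. The paper does this by splicing rays:
\begin{enumerate}
\item Take the rays of $D$'s witnessing family that start in $\delta(K'\cup D)\setminus\delta(K')$.
\item Follow each such ray to its first edge $e_Q\in\delta(K')$. This edge lies in $E(D)$, hence not in $\delta(K'\cup D)$.
\item Continue along the ray of $K'$'s witnessing family that starts at $e_Q$.
\item Add the rays of $K'$'s witnessing family that start in $\delta(K'\cup D)\cap\delta(K')$.
\end{enumerate}

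Alternatively, fix a region $W\subseteq K'\cup D$ in which $\omega_K$ lives.
\begin{enumerate}
\item Submodularity of $W$ and $K'$, with $|\delta(W\cap K')|\ge|\delta(K')|$, gives $|\delta(W\cup K')|\le|\delta(W)|$.
\item Submodularity of $W\cup K'$ and $D$ gives $|\delta(K'\cup D)|\le|\delta(W\cup K')|$. This uses that $\omega_D$ lives in $K'\cap D\subseteq (W\cup K')\cap D$, so $|\delta((W\cup K')\cap D)|\ge|\delta(D)|$.
\end{enumerate}
Then Lemma~\ref{CharacterizationBoundaryLinked} applies. With this step supplied, your minimality argument is essentially the paper's induction on the number of crossed members, recast as a minimal-choice argument.
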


\begin{proof}[Revisited proof of Lemmas 7.5 and 7.6 from \cite{TreeCutPartition}]
If $\delta(K) = \emptyset$, then $K$ is a connected component of $G$ and every region from $\mathcal{D}$ is either contained in $K$ or disjoint from $K$. 
Hence, we may assume $\delta(K)\neq \emptyset$, and prove the lemma by induction on $|\delta(K)|$.
Next, observe that if a given $D\in \mathcal{D}$ is not nested with $K$, then the edge set of $D$ must intersect $\delta(K)$ by the connectedness of $D$. 
Since $\mathcal{D}$ consists of pairwise disjoint regions, the finiteness of $\delta(K)$ implies that $\mathcal{D}':=\{D\in \mathcal{D}: D\text{ is not nested with }K\}$ is also finite. If $\mathcal{D}'=\emptyset$, then $K$ is already nested with all regions in $\mathcal{D}$ and we may take $C' = K$.
We proceed by an inner induction on $|\mathcal{D}'|$. 

Since $K$ is boundary-linked to $\omega_K$, we first let $\mathcal{R}\subseteq\omega_K$ denote an edge-disjoint family of $|\delta(K)|$ many rays in $\overline{K}$ starting at the edges from $\delta(K)$. Now, we fix some $D\in \mathcal{D}'$ and recall that a simple double counting gives the following  `submodularity of cuts': 

\begin{equation}\label{eq:submodularity}
    |\delta(K)|+|\delta(D)|\geq \max\{|\delta(K\cap D)|+|\delta(K\cup D)|, |\delta(K\setminus D)|+|\delta(D\setminus K)|\}.
\end{equation}

Because $\omega_K$ lives in $K\setminus D$ by hypothesis, each ray from $\mathcal{R}$ intersects $\delta(K\setminus D)$ in a different edge, implying $|\delta(K\setminus D)|\geq |\delta(K)|$. This leads to the following case distinctions:

\begin{case}\label{case1}
    Suppose first that $|\delta(K\setminus D)| = |\delta(K)|$. In this case, the connected component $K'$ of $K\setminus D$ in which $\omega_K$ lives is disjoint from $D$ and satisfies $|\delta(K')|\leq |\delta(K\setminus D)| = |\delta(K)|$. Assume now that $K'$ is boundary-linked to $\omega_K$, as illustrated by Figure \ref{fig:SubcaseCase1}. Since $\mathcal{D}$ is a disjoint family, every region of $\mathcal{D}$ not nested with $K'$ belongs to $\mathcal{D}'\setminus\{D\}$. Hence the result follows by applying the inner induction assumption to $K'$. 
    Finally, if $K'$ is not boundary-linked to $\omega_K$, Lemma \ref{CharacterizationBoundaryLinked} ensures the existence of a third region $K'' \subseteq K'$ containing representatives of $\omega_K$, as illustrated by Figure \ref{fig:Subcase2}, and for which $|\delta(K'')|<|\delta(K')|\leq |\delta(K)|$. If we choose $K''$ such that $|\delta(K'')|$ is minimal, then $K''$ becomes boundary-linked to $\omega_K$ (again by Lemma \ref{CharacterizationBoundaryLinked}), and the result follows by applying the outer induction assumption to $K''$.
\end{case}

\begin{figure}[ht]
    \centering

    \begin{subfigure}{0.45\textwidth}
        \centering
        \includegraphics[width=0.7\linewidth]{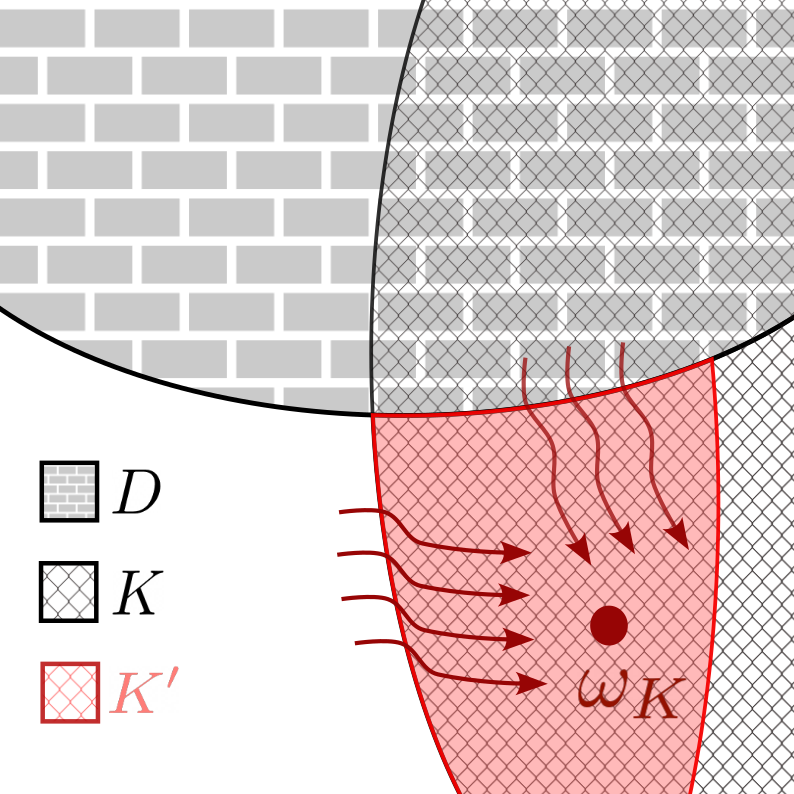}
        \caption{Case in which the connected component $K'$ of $K\setminus D$ is boundary-linked to $\omega_K$.}
        \label{fig:SubcaseCase1}
    \end{subfigure}
    \hfill
    \begin{subfigure}{0.45\textwidth}
        \centering
        \includegraphics[width=0.7\linewidth]{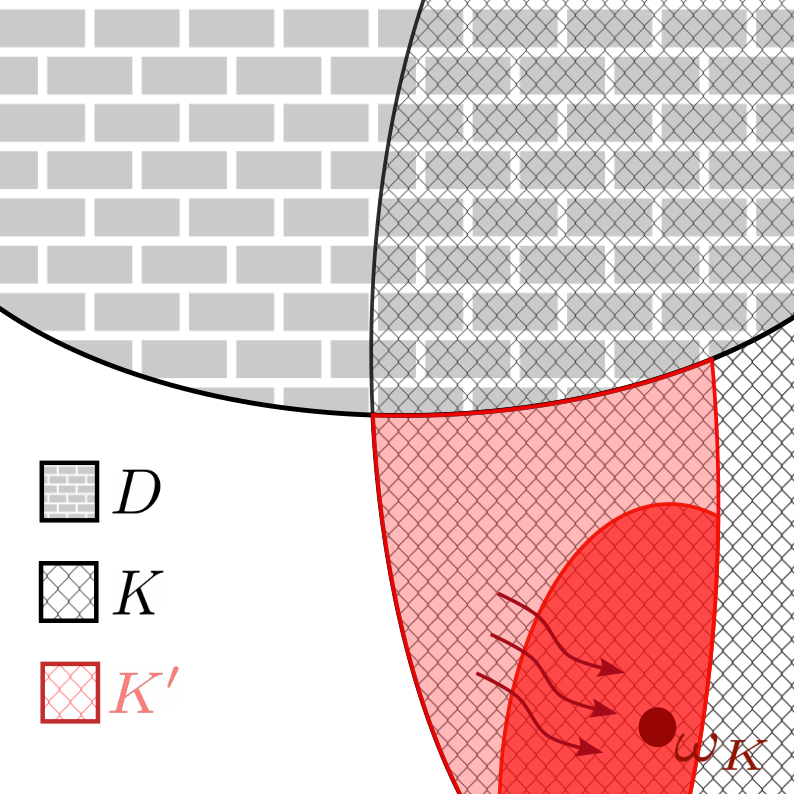}
        \caption{Case in which $\omega_K$ can be separated from the boundary of $K'$ by fewer than $|\delta(K')|$ edges.}
        \label{fig:Subcase2}
    \end{subfigure}

    \caption{Description of Case \ref{case1}, in which $|\delta(K\setminus D)| = |\delta(K)|$.}
    \label{fig:Case1}
\end{figure}

\begin{case}\label{case2}
    Suppose now that $|\delta(K\setminus D)|>|\delta(K)|$. In this case, we have $|\delta(D\setminus K)|<|\delta(D)|$ by (\ref{eq:submodularity}). 
Therefore, the edge-end $\omega_D$ to which $D$ is boundary-linked cannot live in any component of $D\setminus K$, so must belong to $K\cap D$, implying that $|\delta(K\cap D)|\geq |\delta(D)|$. Then it follows from (\ref{eq:submodularity}) that $K':=K\cup D$ satisfies $|\delta(K')|\leq |\delta(K)|$.

Moreover, since $K\cap D\neq\emptyset$, $K'$ is connected. Hence, it remains to show that $K'$ is boundary-linked to $\omega_K$. Since every region of $\mathcal{D}$ not nested with $K'$ belongs to $\mathcal{D}'\setminus\{D\}$, the result will then follow by applying the inner induction assumption to $K'$. 

We now construct a ray family that witnesses that $K'$ is boundary-linked to $\omega_K$. 
First, we partition $\delta(K') = F_1 \sqcup F_2$ where $F_1:= \delta (K')  \setminus \delta(K) \subseteq \delta(D)$ and $F_2 = \delta(K') \cap \delta (K) \subseteq \delta(K)$.
Then let $\mathcal{R}'\subseteq \omega_D$ be a ray family witnessing that $D$ is boundary-linked to $\omega_D$ and keep from this family only the rays starting in $F_1$ (then disregarding the rest). Figure \ref{fig:RayFamilies} illustrates both ray families $\mathcal{R}$ and $\mathcal{R}'$.
Since $\delta(K)$ separates $F_1$ from $\omega_D$, each $Q\in \mathcal{R}'$ intersects $\delta(K)$: let $e_Q \in \delta(K)$ be the first such edge on $Q$, and write $P_Q$ for the initial segment of $Q$ before $e_Q$. Since $e_Q \in E(D)$, we know that $e_Q \notin F_2$.
Let $R_Q\in \mathcal{R}$ denote the unique ray starting at the edge $e_Q$; and write $\mathcal{R}_2 \subseteq \mathcal{R}$ for the set of rays starting in $F_2$. Then, as suggested by Figure \ref{fig:FinalRayFamily}, the family of concatenations $\{{P_Q}^\frown R_Q: Q\in \mathcal{R}'\}\cup \mathcal{R}_2$ consists of pairwise edge-disjoint rays witnessing that $K':=K\cup D$ is boundary-linked to $\omega_K$. 
\end{case}

\end{proof}

\begin{figure}[ht]
    \centering

    \begin{subfigure}{0.45\textwidth}
        \centering
        \includegraphics[width=0.7\linewidth]{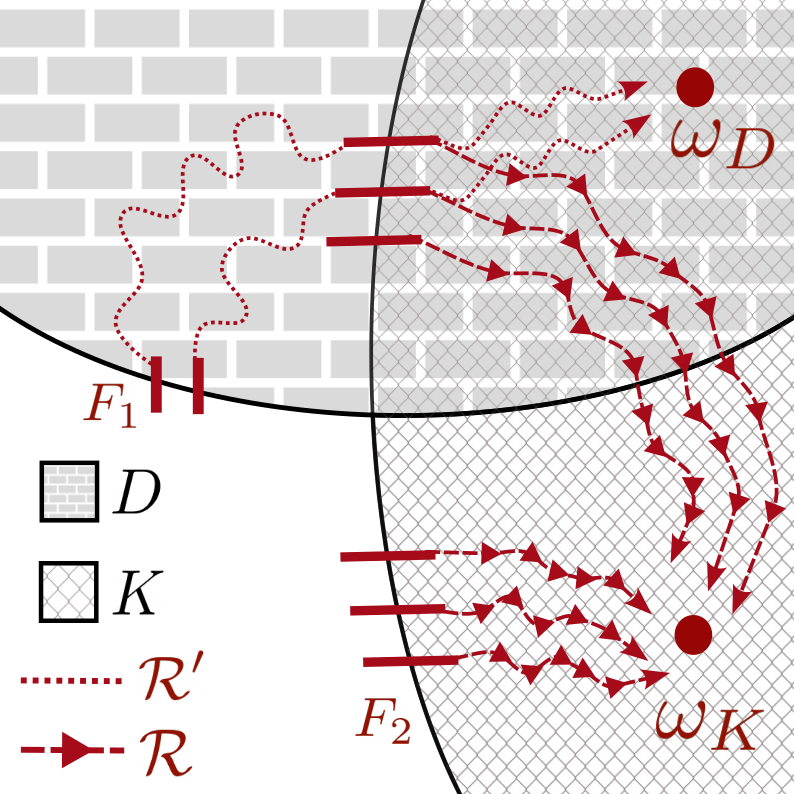}
        \caption{Depiction of the ray families $\mathcal{R}\subseteq \omega_K$ and $\mathcal{R}'\subseteq \omega_D$.}
        \label{fig:RayFamilies}
    \end{subfigure}
    \hfill
    \begin{subfigure}{0.45\textwidth}
        \centering
        \includegraphics[width=0.7\linewidth]{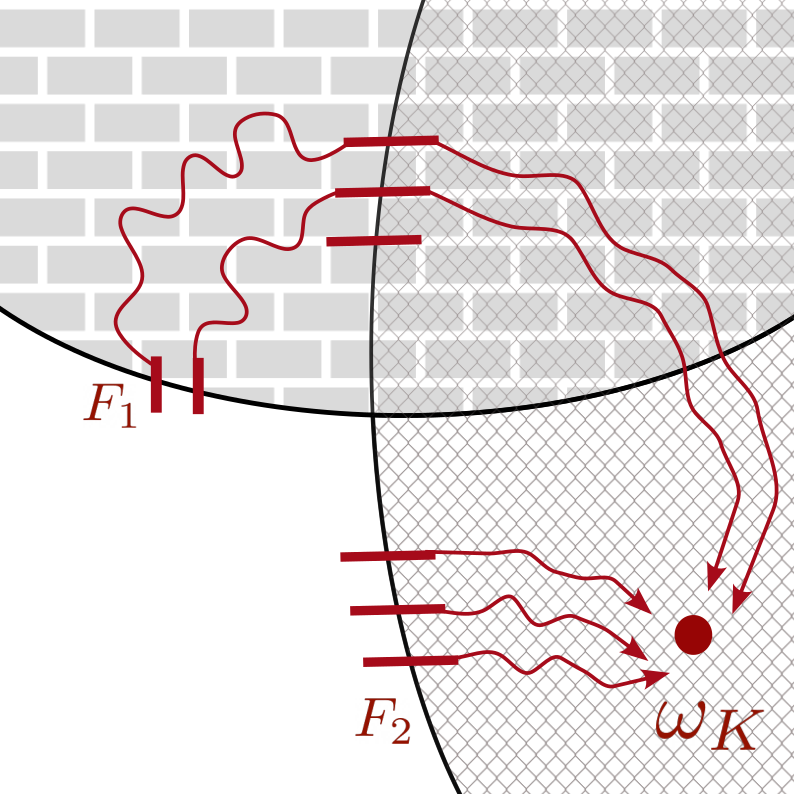}
        \caption{Family of edge-disjoint rays witnessing that $K\cup D$ is boundary-linked to $\omega_K$.}
        \label{fig:FinalRayFamily}
    \end{subfigure}

    \caption{Description of Case \ref{case2}, in which $|\delta(K\setminus D)| > |\delta(K)|$ and, hence, $|\delta(K\cup D)|\leq |\delta(K)|$.} 
    \label{fig:Case2}
\end{figure}

\section{Proof of the main result}\label{sec:proof}

This section proves Theorem \ref{Main} by splitting the construction of the tree-cut decomposition $(T,\mathcal{V})$ into three parts. First, given a region $C$ of the underlying graph $G$, subsection \ref{subsec:algorithm} presents a recursive method for capturing undominated edge-ends that live in $C$ but to which $C$ is not boundary-linked. This approach closely follows the framework developed by Albrechtsen, Jacobs, Knappe and Pitz in \cite{LinkedTreeDecompositions}, centred on their Algorithm 7.2. In fact, comparing with Theorem 7.9 of \cite{LinkedTreeDecompositions}, the first subsection below will establish the following result:

\begin{lemma}\label{LemmaAlgorithm}
    Let $C$ be a region of a graph $G$ and fix a disjoint family $\mathcal{E}$ of boundary-linked regions contained in $C$ with $|\delta(C')|<|\delta(C)|$ for every $C'\in\mathcal{E}$. Then, there is a disjoint family $\Sigma(C,\mathcal{E})$ of boundary-linked regions contained in $C$ satisfying the properties below:
    \begin{enumerate}
        \item \label{Algorithm1} Every $C'\in \mathcal{E}$ is included in some $D\in \Sigma(C,\mathcal{E})$;
        \item\label{Algorithm2} $|\delta(D)|<|\delta(C)|$ for every $D\in \Sigma(C,\mathcal{E})$;
        \item\label{Algorithm3} Let $\omega$ be an undominated edge-end of $G$ living in $C$ but not in any region from $\mathcal{E}$. Suppose further that $\omega$ is an undominated edge-end to which $C$ is not boundary-linked, so that (by Lemma \ref{CharacterizationBoundaryLinked}) it lives in a region $C'\subseteq C$ with $|\delta(C')|<|\delta(C)|$. Then $\omega$ lives in a region $C''\subseteq C$ included in some $D\in \Sigma(C,\mathcal{E})$ and with $|\delta(C'')|\leq |\delta(C')|$;
        \item\label{Algorithm4} Suppose $\mathcal{E}=\emptyset$, and that some region $D'$ of $C$ is such that each $D\in \Sigma(C,\mathcal{E})$ is either disjoint from $D'$ or included in $D'$. Then $|\delta(D)|<|\delta(D')|$ for every $D\in \Sigma(C,\mathcal{E})$ with $D\subsetneq D'$.
    \end{enumerate}
    In particular, $C$ is boundary-linked to every edge-end not living in any $D\in \Sigma(C,\mathcal{E})$.
\end{lemma}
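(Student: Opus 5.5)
I will build $\Sigma(C,\mathcal{E})$ by a transfinite greedy procedure, following Algorithm 7.2 of \cite{LinkedTreeDecompositions}, and use the Uncrossing Lemma~\ref{SubmodularityCuts} to keep the family disjoint at every step. Throughout, I work only with regions $D\subseteq C$ that are boundary-linked to some undominated edge-end and satisfy $|\delta(D)|<|\delta(C)|$. Since these cut sizes are bounded by the finite number $k:=|\delta(C)|$, I will process the possible boundary sizes in the order $j=0,1,\dots,k-1$.

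Start with $\Sigma_{-1}:=\mathcal{E}$. At stage $j$, let $\Omega_j$ be the set of undominated edge-ends $\omega$ that satisfy three conditions: $\omega$ lives in $C$; $\omega$ lives in no member of the current family; and $\omega$ lives in some region $C'\subseteq C$ with $|\delta(C')|\le j$. Well-order $\Omega_j$ and treat its ends one at a time. For the current end $\omega$, choose a region $K\subseteq C$ in which $\omega$ lives with $|\delta(K)|$ minimal. By Lemma~\ref{CharacterizationBoundaryLinked}, $K$ is boundary-linked to $\omega$. Now apply Lemma~\ref{SubmodularityCuts} to $K$ and the current disjoint family $\mathcal{D}$. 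This yields a region $K^*$ that is boundary-linked to $\omega$, nested with every member of $\mathcal{D}$, and satisfies $|\delta(K^*)|\le|\delta(K)|\le j$.

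I then add $K^*$ to the family and delete every member contained in it. No member can contain $K^*$, because $\omega$ lives in no member and nestedness then forces any overlapping member to lie inside $K^*$. So the family stays disjoint. At limit steps I take unions, and I argue that the family stabilises pointwise: each vertex is absorbed by strictly larger regions only finitely often. The reason is that in the Case 2 merging of Lemma~\ref{SubmodularityCuts} the new region has boundary no larger than the old one, and boundary sizes are bounded by $k$. Establishing this stabilisation is a technical point I would still need to check. I set $\Sigma(C,\mathcal{E})$ to be the final family.

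Properties \ref{Algorithm1} and \ref{Algorithm2} hold by construction, since members are only ever replaced by supersets of boundary size less than $k$. For \ref{Algorithm3}, let $\omega$ be as in the hypothesis. Either $\omega$ already lived in some member of the family when stage $j=|\delta(C')|$ was reached, or it was processed there. In both cases it lives in a region of boundary at most $|\delta(C')|$ that ends up inside a final member; the first case needs an inductive invariant saying that every end captured by the family lives in a member whose boundary is at most that end's minimal separating size. The final ``in particular'' statement follows from \ref{Algorithm3} together with Lemma~\ref{CharacterizationBoundaryLinked}.

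The main obstacle is \ref{Algorithm4}. Assume $\mathcal{E}=\emptyset$ and let $D\subsetneq D'$ with $D\in\Sigma(C,\mathcal{E})$. I will suppose, for contradiction, that $|\delta(D')|\le|\delta(D)|$. Since $D$ is boundary-linked to an end $\omega_D$ living in $D'$, Lemma~\ref{CharacterizationBoundaryLinked} gives $|\delta(D')|\ge|\delta(D)|$ only for regions inside $D$, so the comparison has to come from the construction itself. The key point is that $D'$ is a union of members of $\Sigma$ together with vertices of no member. At the stage $j=|\delta(D')|$, the end $\omega_D$ had a minimal separating region of size at most $|\delta(D')|$. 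If $\omega_D$ was not yet captured, the processing step should have produced a region at least as large as $D'$, because the union-with-$D$ case of the uncrossing lemma enlarges regions. This would contradict the strict inclusion $D\subsetneq D'$. I expect to need a strengthened invariant here, namely that each processed $K^*$ is chosen maximal (by inclusion) among the valid choices. I would try to secure this invariant first, possibly by replacing ``$|\delta(K)|$ minimal'' with ``minimal size, then inclusion-maximal'', which is well-defined since boundary-linked regions of equal minimal size for $\omega$ are closed under union by \eqref{eq:submodularity}.
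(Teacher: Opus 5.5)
Your overall architecture matches the paper's: greedy transfinite capture of uncaptured undominated ends by regions of minimal boundary size, made nested with the current family via Lemma~\ref{SubmodularityCuts}. Deleting absorbed members, where the paper keeps the nested family $\Sigma'(C,\mathcal{E})$ and takes its $\subseteq$-maximal elements, is cosmetic. Items (\ref{Algorithm1})--(\ref{Algorithm3}) go through essentially along your lines. For (\ref{Algorithm3}), though, the invariant must say that a captured end $\omega$ lives in a region of boundary $m(\omega)$ \emph{contained in} a member, where $m(\omega)$ is the least boundary of a region of $C$ containing $\omega$. It cannot say that the member itself has boundary at most $m(\omega)$, because later absorbing members may have larger boundary.

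The genuine gap is (\ref{Algorithm4}), and your construction as written violates it. Let $G$ be the multigraph on $\{c,x_0,x_1,\dots\}$ in which consecutive $x_i$ are joined by two parallel edges and $x_0$ is joined to $c$ by three edges. Take $C=\{x_0,x_1,\dots\}$, so $|\delta(C)|=3$ and the unique end has $m(\omega)=2$. At stage $j=2$ your rule permits $K=\{x_i:i\ge 5\}$. Nothing crosses it, so $\Sigma(C)=\{K\}$. Then $D'=\{x_i:i\ge1\}$ satisfies $K\subsetneq D'$ with $|\delta(D')|=|\delta(K)|$, contradicting (\ref{Algorithm4}).

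Your heuristic that the uncrossing ``enlarges'' regions gives no such control. Lemma~\ref{SubmodularityCuts} only asserts that \emph{some} suitable nested region exists, and its Case~1 shrinks $K$. For the same reason your stabilisation argument fails: boundary sizes can stay constant along a strictly increasing chain of absorptions, so boundedness by $|\delta(C)|$ proves nothing. Your proposed repair, making $K$ inclusion-maximal \emph{before} uncrossing, is applied to the wrong object. The lemma as stated does not guarantee that its output is maximal among the admissible nested regions.

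The missing idea is the paper's Algorithm~\ref{ChoiceCi}. Choose the new region itself $\subseteq$-maximal among all regions $\subseteq C$ that have the current minimal boundary size, are boundary-linked to the end being processed, and are nested with the current family. Lemma~\ref{SubmodularityCuts} is used only to show this set is non-empty, and Zorn's lemma with Lemma~\ref{PropertiesRegions} supplies a maximal element.

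This maximality is exactly what excludes infinite strictly ascending chains (Lemma~\ref{WellDefinitionCi}). Hence every final member $D$ was chosen for an end $\omega$ that was still uncaptured at that moment, and $|\delta(D)|=m(\omega)$. You must use this $\omega$, not an arbitrary end to which $D$ is boundary-linked, since the latter may have been captured earlier.

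Then (\ref{Algorithm4}) is immediate. Suppose $D\subsetneq D'$ and $|\delta(D')|\le|\delta(D)|$. Then $|\delta(D')|=m(\omega)$, so $D'$ is boundary-linked to $\omega$ by Lemma~\ref{CharacterizationBoundaryLinked}. Members only grow, so every member present when $D$ was chosen lies inside a final member. It is therefore disjoint from or contained in $D'$. Thus $D'$ was an admissible choice strictly larger than $D$, contradicting maximality.
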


In the case where $\mathcal{E} = \emptyset$, we write $\Sigma(C):=\Sigma(C,\emptyset)$ and call any such family satisfying (\ref{Algorithm1})--(\ref{Algorithm4}) a \textbf{cutting family} for $C$. To complement the above result (especially regarding its third item), subsection \ref{sec:successor} uses Pitz's Theorem \ref{DisplayKurkofka} to partition \emph{all} undominated edge-ends within $C$, including those to which $C$ is itself boundary-linked. The details are recorded in the following technical lemma:

\begin{lemma}\label{LemmaKurkofka}
    Let $C$ be a region of an infinite graph $G$ and suppose that $\mathcal{E}$ is a disjoint family of boundary-linked regions contained in $C$. Assume further that the following condition holds:
    \begin{center}
        $(\dagger)$ If some region $D'$ of $C$ is such that each $C'\in \mathcal{E}$ is either disjoint from $D'$ or included in $D'$, then $|\delta(C')|<|\delta(D')|$ for every $C'\in \mathcal{E}$ with $C'\subsetneq D'$.
    \end{center}

    Then, there is a disjoint family $\mathbf{S}(C,\mathcal{E})$ of boundary-linked regions contained in $C$ satisfying the following properties:
    \begin{enumerate}
        \item\label{Kurkofka1} Every $C'\in \mathcal{E}$ is included in some region from $\mathbf{S}(C,\mathcal{E})$;
        \item\label{Kurkofka2} If $D\in \mathbf{S}(C,\mathcal{E})$ satisfies $\delta(D)\cap \delta(C)\neq \emptyset$, then $D\in \mathcal{E}$;
        \item\label{Kurkofka3} Every undominated edge-end living in $C$ also lives in some $D\in \mathbf{S}(C,\mathcal{E})$.
    \end{enumerate}
\end{lemma}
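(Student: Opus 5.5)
We build $\mathbf{S}(C,\mathcal{E})$ from the decomposition of Theorem~\ref{DisplayKurkofka}, applied to an auxiliary graph $H$. Obtain $H$ from $G[C]$ by contracting each $C'\in\mathcal{E}$ to a single vertex $v_{C'}$ and adding one new vertex $v_\ast$ carrying the edges of $\delta(C)$. Each $C'$ has finite boundary, so each $v_{C'}$ has finite degree. The undominated edge-ends of $G$ living in $C$ but in no $C'\in\mathcal{E}$ then correspond to undominated edge-ends of $H$ (this must be checked, since contracting a region could create domination). Let $(T,\mathcal{V})$ be the decomposition of Theorem~\ref{DisplayKurkofka} for $H$, rooted at the node $r$ whose bag contains $v_\ast$.

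For every end of $T$, the displayed edge-end $\omega$ lives in $C_t$ for all $t$ on its ray. Pulling back the regions $C_t$ (regions of $H$ by the property of Theorem~\ref{DisplayKurkofka} on subtrees $\lfloor t\rfloor$) gives regions of $G$ inside $C$ that contain $\omega$. Along the ray the sizes $|\delta(C_t)|$ attain a minimum at a last node $t_\omega$ beyond $r$'s successor; by Lemma~\ref{CharacterizationBoundaryLinked}, the minimum-boundary region containing $\omega$ below it is boundary-linked to $\omega$. To make the chosen regions disjoint and avoid $\delta(C)$, we choose them along $T$: let $\mathcal{S}_0$ be the set of regions obtained, for each such $\omega$, by first passing to a node strictly below $r$ and its successors (so the region misses every edge incident with $v_\ast$, i.e. $\delta(D)\cap\delta(C)=\emptyset$) and then taking a boundary-linked region given by the minimal-cut argument. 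Next, uncross $\mathcal{S}_0$ against $\mathcal{E}$ and against each other using Lemma~\ref{SubmodularityCuts}; the outcome is a nested family. Then take the maximal members of $\mathcal{S}_0$ together with those members of $\mathcal{E}$ not contained in any of them. This is $\mathbf{S}(C,\mathcal{E})$.

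Property~(\ref{Kurkofka3}) holds because an undominated end living in some $C'\in\mathcal{E}$ is covered via (\ref{Kurkofka1}), and every other one is displayed by $T$ and hence captured. For (\ref{Kurkofka1}), a member of $\mathcal{E}$ either survives or lies inside a maximal region; here we need that no chosen region merely crosses some $C'$, which is where the uncrossing lemma is used. For (\ref{Kurkofka2}), the regions coming from $T$ avoid $\delta(C)$ by construction. Uncrossing, however, may enlarge a region to $K\cup D$ with $D\in\mathcal{E}$, which could then meet $\delta(C)$.

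\textbf{Main obstacle.} The hardest step is this interplay between uncrossing and (\ref{Kurkofka2}). This is exactly where $(\dagger)$ should enter. If an uncrossed region $K'$ contains some $C'\in\mathcal{E}$ properly and touches $\delta(C)$, then $(\dagger)$ applied with $D'=K'$ gives $|\delta(C')|<|\delta(K')|$. I would try to combine this with the minimality of $|\delta(K')|$ among regions containing $\omega_K$ to show that $\omega_K$ can be separated more cheaply, contradicting boundary-linkedness via Lemma~\ref{CharacterizationBoundaryLinked}. If that fails, the fallback is to discard such $K'$ and replace it by a boundary-linked region strictly inside it avoiding $\delta(C)$. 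A second, likely technical, point is ensuring that maximal members exist, which needs chains of nested boundary-linked regions to stabilise; finite adhesion and the tree structure of $T$ should give this.
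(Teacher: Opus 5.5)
There is a genuine gap, and it sits in the step you treat as routine: ``uncross $\mathcal{S}_0$ against $\mathcal{E}$ and against each other using Lemma~\ref{SubmodularityCuts}; the outcome is a nested family'', followed by taking maximal members.

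Lemma~\ref{SubmodularityCuts} uncrosses only \emph{one} boundary-linked region $K$ against a \emph{pairwise disjoint} family $\mathcal{D}$ in which $\omega_K$ does not live. It does not make an arbitrary, possibly uncountable, family of boundary-linked regions pairwise nested. For that you must process the regions in a well-order, uncrossing each against the maximal members chosen so far. You then need those maximal members to exist at every stage, i.e.\ no infinite strictly ascending chains. The paper gets exactly this from Algorithm~\ref{ChoiceCi} and Lemma~\ref{WellDefinitionCi}. It works only because of a specific choice rule (first minimise $|\delta(C_i)|$, then take a $\subseteq$-maximal candidate) and a uniform bound $|\delta(C_i)|<|\delta(C)|$, which makes unions of chains regions via Lemma~\ref{PropertiesRegions}. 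In your setup the boundary sizes of the regions chosen for different ends need not be bounded, and you have no choice rule, so an ascending chain may have a union that is not even a region. Once regions have been altered by uncrossing they no longer correspond to nodes of $T$, so finite adhesion and the tree structure of $T$ give you nothing here.

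The missing idea is to let Theorem~\ref{DisplayKurkofka} do only the coarse partition and to leave all refinement to the cutting-family Lemma~\ref{LemmaAlgorithm}.

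\begin{itemize}
\item \textbf{Coarse pieces.} The paper removes from $T$ the finite set $S$ of nodes whose bags contain an endpoint of $\delta(C)$. Such nodes can lie arbitrarily deep, so passing below $r$ and its successors does not suffice. The pieces are the regions $C_{T'}$ for the components $T'$ of $T\setminus S$, together with the members of $\mathcal{E}$ meeting $\delta(C)$. They are disjoint, contain whole $\mathcal{E}$-members, capture every undominated end, and the $C_{T'}$ avoid $\delta(C)$.
\item \textbf{Refinement.} Each piece $D$ that is not boundary-linked is replaced by $\Sigma(D,\mathcal{E}_D)$, where $\mathcal{E}_D=\{C'\in\mathcal{E}: C'\subseteq D\}$.
\item \textbf{Item (2).} All refined members lie inside $D$, so (2) follows from the corresponding property of the coarse pieces. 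Your ``main obstacle'' does not arise.
\item \textbf{Role of $(\dagger)$.} It is not there to rescue (2). It verifies the hypothesis of Lemma~\ref{LemmaAlgorithm} that $|\delta(C')|<|\delta(D)|$ for every $C'\in\mathcal{E}_D$.
\item \textbf{Item (3).} This follows from the final assertion of Lemma~\ref{LemmaAlgorithm}: $D$ is boundary-linked to every end not living in a member of $\Sigma(D,\mathcal{E}_D)$, and $D$ is boundary-linked to none.
\end{itemize}

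Your proposed use of $(\dagger)$, deriving a cheaper separation of $\omega_K$ from $|\delta(C')|<|\delta(K')|$, cannot work, because $\omega_K$ does not live in $C'$. Your fallback of shrinking $K'$ would destroy the nestedness you had just obtained.
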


Comparing the statements of the above two results, we see that condition $(\dagger)$ in Lemma \ref{LemmaKurkofka} coincides with item (\ref{Algorithm4}) of Lemma \ref{LemmaAlgorithm}. Accordingly, we call $\mathbf{S}(C,\mathcal{E})$ a \textbf{successor family} for a region $C$ in a graph $G$ if it is produced by Lemma \ref{LemmaKurkofka} applied with $\mathcal{E}:=\Sigma(C)$ a cutting family. It is worth remarking that some elements of $\mathbf{S}(C,\Sigma(C))$ themselves might be inherited from cutting families of auxiliary regions contained in $C$ but which were not boundary-linked. The first item in the latter result then requires a ``book keeping'' hypothesis from the former one, which suggests why Lemma \ref{LemmaAlgorithm} is also written in terms of a parameter family $\mathcal{E}$. All in all, when indeed constructing the tree-cut decomposition $(T,\mathcal{V})$ for $G$ in subsection \ref{subsec:buildingup}, the members of $\mathbf{S}(C_t,\Sigma(C_t))$ will be precisely the regions $C_s$ (as in subsection \ref{subsec:tree-cut}) with $s\in \mathrm{succ}(t)$.

\subsection{An algorithm for capturing regions of small boundary size}\label{subsec:algorithm} A first step in revisiting the theory of subsections 7.1 and 7.2 of \cite{LinkedTreeDecompositions} is to record some combinatorial properties of regions in a graph $G$. Following Lemma 7.3 of that paper, we say that two sets of vertices $C,D\subseteq V(G)$ are \textbf{nested} if $C\subseteq D$, $D\subseteq C$ or $C\cap D = \emptyset$. Building on this notion, the lemma below is useful for producing $\subseteq$-maximal regions under combinatorial constraints:

\begin{lemma}\label{PropertiesRegions}
    Let $G$ be a connected graph and $\mathcal{C}$ be a chain of regions of $G$, in the sense that $C\subseteq C'$ or $C'\subseteq C$ for every pair $C,C'\in\mathcal{C}$. If there is some $n\in\mathbb{N}$ such that $|\delta(C)|\leq n$ for every $C\in \mathcal{C}$, then:
    \begin{enumerate}
        \item\label{Regions1} $D:=\bigcup \mathcal{C}$ is a region of $G$ with $|\delta(D)|\leq n$;
        \item\label{Regions2'} If every $C\in \mathcal{C}$ is nested with some $X\subseteq V(G)$, then so is $D$;
        \item\label{Regions2} If every region in $\mathcal{C}$ is boundary-linked, then there is some $C\in \mathcal{C}$ such that $D$ is boundary-linked to every edge-end to which $C$ is boundary-linked. 
    \end{enumerate}
\end{lemma}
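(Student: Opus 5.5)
Item (\ref{Regions1}) comes first, via a stabilisation argument. Connectivity of $G[D]$ is immediate: any two vertices of $D$ lie in a common member of the chain, since the chain is totally ordered, and that member induces a connected subgraph. For the boundary, suppose $\delta(D)$ contained $n+1$ distinct edges $e_0,\dots,e_n$. Each $e_i$ has an endpoint $x_i\in D$ and an endpoint $y_i\notin D$. Choose $C_i\in\mathcal{C}$ with $x_i\in C_i$, and let $C$ be the largest of the finitely many $C_i$. Then all $x_i$ lie in $C$ and all $y_i$ lie outside $D\supseteq C$, so $e_0,\dots,e_n\in\delta(C)$. This contradicts $|\delta(C)|\le n$, so $|\delta(D)|\le n$ and $D$ is a region.

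Item (\ref{Regions2'}) is a case split on the chain.
\begin{itemize}
\item If some $C\in\mathcal{C}$ satisfies $X\subseteq C$, then $X\subseteq D$.
\item Otherwise every member satisfies $C\subseteq X$ or $C\cap X=\emptyset$.
\begin{itemize}
\item If some member lies in $X$ and meets $X$, then no larger member can be disjoint from $X$. So every member above it is contained in $X$. Every member below it is contained in it, hence in $X$. Thus $D\subseteq X$.
\item Otherwise all members are disjoint from $X$, and $D\cap X=\emptyset$.
\end{itemize}
\end{itemize}
One small point needs checking: a member that is both contained in $X$ and disjoint from $X$ must be empty. Regions are connected subgraphs, which I will treat as nonempty. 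If empty regions are allowed, the argument above still works once they are discarded.

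Item (\ref{Regions2}) is the main step, and I would use the sharper form of the first stabilisation. Each $\delta(C)$ is finite and bounded by $n$. Take $C$ from the argument above with all edges of $\delta(D)$ in $\delta(C)$, which gives $\delta(D)\subseteq\delta(C)$. Better still, choose $C\in\mathcal{C}$ so that $|\delta(D)|$ is attained in a controlled way. Every edge of $\delta(D)$ lies in $\delta(C')$ for all $C'\supseteq C$ in the chain. Now let $\omega$ be an edge-end to which $C$ is boundary-linked, and take the witnessing family $\mathcal{R}$ of $|\delta(C)|$ edge-disjoint rays in $\overline{C}$ starting at the edges of $\delta(C)$. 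Restrict to the rays $\mathcal{R}_D$ starting at edges of $\delta(D)\subseteq\delta(C)$. These rays lie in $\overline{C}$, which is contained in $\overline{D}$ apart from their first edges, and those first edges are exactly the edges of $\delta(D)$. Hence $\mathcal{R}_D$ is a family of $|\delta(D)|$ edge-disjoint rays of $\omega$ in $\overline{D}$ starting at the edges of $\delta(D)$. So $D$ is boundary-linked to $\omega$.

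The one thing to verify carefully is that a ray starting at an edge $e\in\delta(D)$ stays inside $\overline{D}$. Since $e=xy$ with $y\notin D$, the ray leaves $y$ through $e$ into $C$, and from then on it uses only edges of $\overline{C}$. A ray cannot revisit $y$, and every other vertex it meets lies in $C\subseteq D$. Therefore all of its edges after the first lie in $E(D)$, as required.
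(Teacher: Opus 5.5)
Your proof is correct and follows the paper's argument essentially step by step: the same stabilisation trick (which the paper isolates as a Claim about finite edge sets with an endpoint in $D$) gives item 1 together with a member $C$ satisfying $\delta(D)\subseteq\delta(C)$, the same three-way case split gives item 2, and restricting $C$'s witnessing family to the rays starting in $\delta(D)$ gives item 3. Your final paragraph and the qualifier ``apart from their first edges'' are unnecessary, since $E(\overline{C})\subseteq E(\overline{D})$ holds outright because $C\subseteq D$, and this is all the paper uses; moreover, the final paragraph's claim that the ray meets no vertex outside $C$ after $y$ would actually need the edge-disjointness of the family.
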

\begin{proof}
For item (\ref{Regions2'}), the hypothesis that each $C\in\mathcal{C}$ is nested with $X$ ensures that at least one of the following holds: (i) $X\subseteq C$ for some $C\in\mathcal{C}$, whence $X\subseteq D$; (ii) $C\subseteq X$ for every $C\in\mathcal{C}$, whence $D\subseteq X$; or (iii) $C\cap X=\emptyset$ for every $C\in\mathcal{C}$, whence $D\cap X=\emptyset$. Indeed, if (i) fails then each $C\in\mathcal{C}$ satisfies $C\subseteq X$ or $C\cap X=\emptyset$, and the $\subseteq$-chain property forces these alternatives to be shared by every non-empty $C\in\mathcal{C}$. In each case, $D$ is nested with $X$. Connectedness of $D$ follows since any two $C,C'\in\mathcal{C}$ are connected subsets with non-empty intersection. The remaining items rely on the intermediate claim below.

\begin{center}
\textbf{Claim.} For every finite set of edges $F\subseteq E(G)$, each of which has at least one endpoint in $D$, there is some $C\in \mathcal{C}$ with $F\subseteq E(\overline{C})$.
\end{center}

\begin{proof}[Proof of the claim]
Let $S\subseteq D$ consist of the endpoints in $D$ of the edges of $F$, so that each $v\in S$ belongs to some $C_v\in\mathcal{C}$ by definition of $D$. Since $\mathcal{C}$ is a $\subseteq$-chain and $S$ is finite, there is a $\subseteq$-maximum region $C$ in $\{C_v : v\in S\}$. Then $S\subseteq C$, so $F\subseteq E(\overline{C})$.
\end{proof}

For item (\ref{Regions1}), suppose for contradiction that $|\delta(D)|>n$ and pick a finite $F\subseteq\delta(D)$ of size $n+1$. The claim yields $C\in\mathcal{C}$ with $F\subseteq E(\overline{C})$. Since each edge of $F$ has an endpoint outside $D\supseteq C$, we have $F\subseteq\delta(C)$, contradicting $|\delta(C)|\leq n$. Hence, $|\delta(D)|\leq n$ and applying the claim with $F=\delta(D)$ yields some $C\in\mathcal{C}$ with $\delta(D)\subseteq E(\overline{C})$. As above, $\delta(D)\subseteq\delta(C)$.

For item (\ref{Regions2}), fix a $C\in\mathcal{C}$ as produced at the end of the proof of (\ref{Regions1}) (so that $\delta(D)\subseteq\delta(C)$) and let $\omega$ be any edge-end to which $C$ is boundary-linked. Then $\overline{C}$ contains a family of $|\delta(C)|$ edge-disjoint, edge-equivalent rays in $\omega$ whose first edges are precisely the edges of $\delta(C)$. Since $C\subseteq D$ implies $E(\overline{C})\subseteq E(\overline{D})$, these rays also lie in $\overline{D}$. Restricting to the rays whose first edge lies in $\delta(D)\subseteq\delta(C)$ then gives a subfamily of $|\delta(D)|$ edge-disjoint, edge-equivalent rays in $\overline{D}$ whose first edges are precisely the edges of $\delta(D)$. Hence $D$ is boundary-linked to $\omega$.
\end{proof}

In order to address the proof of Lemma \ref{LemmaAlgorithm}, fix from now on in this section a region $C$ of a graph $G$ and a disjoint family $\mathcal{E}$ of boundary-linked regions contained in $C$ with $|\delta(C')|<|\delta(C)|$ for every $C'\in\mathcal{E}$. The construction of $\Sigma(C,\mathcal{E})$ satisfying (\ref{Algorithm1})--(\ref{Algorithm4}) proceeds via an auxiliary family $\Sigma'(C,\mathcal{E}):=\{C_i\}_{i<\Omega}$ extending $\mathcal{E}$. Since we require $\Sigma'(C,\mathcal{E})\supseteq\mathcal{E}$, we begin by fixing an enumeration $\{C_i\}_{i<\alpha}$ of $\mathcal{E}$ for some ordinal $\alpha$. For $i\geq\alpha$, assuming $\{C_j\}_{j<i}$ has already been defined, we call a region $C'\subseteq C$ an $i$-\textbf{candidate} if the following hold:

\begin{itemize}
    \item $|\delta(C')|<|\delta(C)|$;
    \item $C'$ is boundary-linked to an undominated edge-end that lives in $C$ but in no $C_j$ with $j<i$;
    \item $C'$ is nested with $C_j$ for every $j<i$.
\end{itemize}

Together, the last two conditions imply that each $C_j$ with $j<i$ is either disjoint from $C'$ or included in $C'$: indeed, we have $C'\not\subseteq C_j$, since $C'$ is boundary-linked to an undominated edge-end which lives in $C'$ but not in $C_j$. If an $i$-candidate exists, we choose $C_i$ according to the following rule:

\begin{algorithm}[Revisited Algorithm 7.2 from \cite{LinkedTreeDecompositions}]\label{ChoiceCi}
    Among all $i$-candidates, we first choose $C_i$ to minimize $|\delta(C_i)|$, and we further require $C_i$ to be $\subseteq$-maximal within this boundary-size class. Existence of such a $\subseteq$-maximal choice is ensured by Zorn's Lemma: given a $\subseteq$-chain $\mathcal{C}$ of $i$-candidates sharing the common boundary size $n:=\min\{|\delta(C')|:C'\text{ is an }i\text{-candidate}\}$, set $D:=\bigcup\mathcal{C}$. On one hand, items (\ref{Regions1}) and (\ref{Regions2'}) of Lemma \ref{PropertiesRegions} show that $D$ is a region with $|\delta(D)|\leq n$ and nested with every $C_j$, $j<i$. On the other hand, item (\ref{Regions2}) yields some $C'\in\mathcal{C}$ such that $D$ is boundary-linked to every edge-end to which $C'$ is boundary-linked. In particular, $D$ is boundary-linked to an undominated edge-end witnessing the candidacy of $C'$ (hence living in $C$ but in no $C_j$, $j<i$). Thus $D$ is itself an $i$-candidate and the minimality of $n$ forces $|\delta(D)|=n$, so that $D$ is an upper bound for $\mathcal{C}$ with the same boundary size.
\end{algorithm}

The construction terminates at the first ordinal $\Omega$ for which no $\Omega$-candidate exists within $C$. We record the following fact for use in the proof of Lemma \ref{LemmaAlgorithm}:

\begin{lemma}\label{WellDefinitionCi}
    The family $\Sigma'(C,\mathcal{E}):=\{C_i\}_{i<\Omega}$ contains no infinite, strictly $\subseteq$-ascending chain. 
\end{lemma}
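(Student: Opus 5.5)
Suppose for a contradiction that $C_{i_0}\subsetneq C_{i_1}\subsetneq C_{i_2}\subsetneq\cdots$ is an infinite strictly ascending chain in $\Sigma'(C,\mathcal{E})$. The plan is to show that the boundary sizes $|\delta(C_{i_k})|$ must strictly decrease along the chain, at least from some point on. Since they lie in $\mathbb{N}$, this is impossible, and the contradiction proves the lemma.

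\textbf{Step 1: the indices increase.} First I check that $i_0<i_1<i_2<\cdots$. If $j<i$ are both at least $\alpha$, then $C_i$ is an $i$-candidate. So $C_i$ is boundary-linked to an undominated edge-end $\omega_i$ living in $C$ but in no $C_j$ with $j<i$. Since $\omega_i$ lives in $C_i$, we get $C_i\not\subseteq C_j$, and therefore $C_j\subsetneq C_i$ forces $j<i$. The same argument applies when $j<\alpha\le i$. If both indices are below $\alpha$, then $C_i,C_j\in\mathcal{E}$ are disjoint and so not strictly nested. Hence at most the first chain element lies in $\mathcal{E}$, and after discarding it all $i_k\ge\alpha$.

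\textbf{Step 2: the boundary sizes strictly decrease.} Fix $k$, and write $i:=i_k$, $i':=i_{k+1}$, so that $i<i'$ and $C_i\subsetneq C_{i'}$. I claim $|\delta(C_{i'})|<|\delta(C_i)|$. The key observation is that $C_{i'}$ was already a legitimate option at stage $i$.
\begin{itemize}
\item $|\delta(C_{i'})|<|\delta(C)|$ holds by candidacy.
\item $C_{i'}$ is boundary-linked to $\omega_{i'}$. This end lives in $C$ and in no $C_j$ with $j<i'$, so in particular in none with $j<i$.
\item $C_{i'}$ is nested with every $C_j$ for $j<i'$, hence with every $C_j$ for $j<i$.
\end{itemize}
So $C_{i'}$ is an $i$-candidate. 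By the minimality rule of Algorithm~\ref{ChoiceCi}, $|\delta(C_i)|\le|\delta(C_{i'})|$. If equality held, then $C_{i'}$ would be an $i$-candidate of the same minimal boundary size properly containing $C_i$, contradicting the $\subseteq$-maximality in the choice of $C_i$.

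This gives $|\delta(C_{i_0})|>|\delta(C_{i_1})|<\cdots$ in the wrong direction from what is needed? No: the inequality $|\delta(C_i)|\le|\delta(C_{i'})|$ together with the exclusion of equality gives $|\delta(C_{i_k})|<|\delta(C_{i_{k+1}})|$. So the sizes in fact strictly \emph{increase} along the chain.

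\textbf{Step 3: the increasing sizes are bounded.} Every chain element satisfies $|\delta(C_{i_k})|<|\delta(C)|$, and $|\delta(C)|$ is finite because $C$ is a region. A strictly increasing sequence of natural numbers cannot stay below the fixed bound $|\delta(C)|$. This contradiction shows that no infinite strictly ascending chain exists.

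The main subtlety lies in Step 2, in verifying that $C_{i'}$ is genuinely an $i$-candidate. The second candidacy condition refers only to indices below the current stage, so it transfers downward from $i'$ to $i$. Monotonicity of that condition in $i$ is exactly what makes the comparison valid.
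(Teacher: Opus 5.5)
Your proof is correct, and it takes a different, more elementary route than the paper's. The paper argues via the union. It takes $D:=\bigcup\mathcal{C}$ of the chain and uses Lemma~\ref{PropertiesRegions}\,(\ref{Regions2'}),(\ref{Regions2}) to find a chain member $C_i$ such that $D$ is boundary-linked to the witness end of $C_i$. It then invokes the Menger-based Lemma~\ref{CharacterizationBoundaryLinked} to get $|\delta(D)|\le|\delta(C_i)|$, so $D$ is an $i$-candidate of the same boundary size strictly containing $C_i$.

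You instead use the fact that candidacy passes downwards in the index: for $\alpha\le i<i'$, the set $C_{i'}$ is already an $i$-candidate. The minimality and maximality rules of Algorithm~\ref{ChoiceCi} then force $|\delta(C_i)|<|\delta(C_{i'})|$ whenever $C_i\subsetneq C_{i'}$. Combined with your Step~1 (the larger set in a strict inclusion carries the larger index) and the finite bound $|\delta(C)|$, this rules out any infinite chain.

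What each route buys:
\begin{itemize}
\item Your argument needs neither the union-of-chains lemma nor Lemma~\ref{CharacterizationBoundaryLinked}.
\item It also gives quantitative information: $|\delta(C_i)|$ is non-decreasing in $i\ge\alpha$, and every strictly ascending chain in $\Sigma'(C,\mathcal{E})$ has at most $|\delta(C)|+1$ members.
\item The paper's argument reuses the same closure-under-unions reasoning that justifies the Zorn step in Algorithm~\ref{ChoiceCi}.
\item The paper's argument also never needs to establish the order of the indices along the chain.
\end{itemize}

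One expository fix: your opening plan says the boundary sizes strictly \emph{decrease}, and Step~2 contains a self-correcting aside. The argument you actually carry out (strict increase, bounded above by $|\delta(C)|$) is the right one, so rewrite the plan and drop the aside to match it.
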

\begin{proof}
    Suppose for contradiction that $C_{i_0}\subsetneq C_{i_1}\subsetneq C_{i_2}\subsetneq\cdots$ is such a chain, and set $\mathcal{C}:=\{C_{i_n} \colon n<\omega\}$ and $D:=\bigcup\mathcal{C}$. Since $\mathcal{E}$ is a disjoint family, at most one chain member lies in $\mathcal{E}$. Dropping a finite initial segment if necessary, we may assume $i_n\geq\alpha$ for every $n \in \mathbb{N}$.

    By item (\ref{Regions2}) of Lemma \ref{PropertiesRegions}, there is some $C_i\in\mathcal{C}$ (with $i\geq\alpha$) such that $D$ is boundary-linked to every edge-end to which $C_i$ is boundary-linked. Let $\omega$ be the undominated edge-end witnessing the candidacy of $C_i$. In particular, $\omega$ lives in $C_i$ but in no $C_k$ with $k<i$, and $D$ is boundary-linked to $\omega$. By Lemma \ref{CharacterizationBoundaryLinked}, $|\delta(D)|\leq|\delta(C_i)|<|\delta(C)|$.

    Hence, aiming to verify that $D$ is itself an $i$-candidate, we already concluded that its boundary size is $<|\delta(C)|$ and that it is boundary-linked to $\omega$, which lives in no $C_k$ with $k<i$. Finally, by the pairwise nestedness of $\{C_i\}_{i<\Omega}$, every chain member is nested with $C_k$ for every $k<i$, from where item (\ref{Regions2'}) of Lemma \ref{PropertiesRegions} gives that $D$ is nested with $C_k$ for every $k<i$.

    Since $i > \alpha$, we know by Algorithm \ref{ChoiceCi} that $|\delta(C_i)|$ is the minimum boundary size among $i$-candidates, so $|\delta(C_i)|\leq|\delta(D)|$ and hence $|\delta(D)|=|\delta(C_i)|$. But then the $\subseteq$-maximality of $C_i$ among $i$-candidates of this boundary size contradicts $C_i\subsetneq D$, which holds because $C_i$ is a chain member and the chain is strictly ascending.
\end{proof}

In particular, for every $i\leq\Omega$, each region in $\{C_j\}_{j<i}$ is contained in a $\subseteq$-maximal member of that family: if some $C_k$ were not, one could iteratively select $C_{k_0}:=C_k\subsetneq C_{k_1}\subsetneq C_{k_2}\subsetneq\cdots$ within $\{C_j\}_{j<i}$, contradicting Lemma \ref{WellDefinitionCi}.

Moreover, since the family $\Sigma'(C,\mathcal{E}):=\{C_i\}_{i<\Omega}$ itself consists of pairwise nested regions by construction, the maximal ones from $\Sigma(C,\mathcal{E})$ are actually pairwise disjoint. In fact, each $D\in \Sigma(C,\mathcal{E})$ is an $i-$candidate for some ordinal $i<\Omega$, so that $D$ is boundary-linked to an undominated edge-end and with boundary size $|\delta(D)|<|\delta(C)|$. Then, considering the above statement, our definition for $\Sigma(C,\mathcal{E})$ indeed meets conditions (\ref{Algorithm1}) and (\ref{Algorithm2}) from Lemma \ref{LemmaAlgorithm}.

Finally, relying now both on the above result and on Lemma \ref{CharacterizationBoundaryLinked}, we finish this subsection by arguing how $\Sigma(C,\mathcal{E})$ also meets properties (\ref{Algorithm3}) and (\ref{Algorithm4}) from Lemma \ref{LemmaAlgorithm}:

\begin{proof}[Proof of Lemma \ref{LemmaAlgorithm}]
    First, in order to prove item (\ref{Algorithm3}) from Lemma \ref{LemmaAlgorithm}, let $\omega \in \Omega_E(G)$ be an undominated edge-end living in $C$ but not in any region within $\mathcal{E}$. Suppose further that $\omega$ lives also in a region $C'\subseteq C$ with $|\delta(C')|<|\delta(C)|$. 
    By passing to a subregion of $C'$ if necessary, we may assume by Lemma \ref{CharacterizationBoundaryLinked} that $C'$ is boundary-linked to $\omega$. 
    
    Set $i:=\min\{j<\Omega : \omega\text{ lives in }C_j\}$ if this set is non-empty and $i:=\Omega$ otherwise. Since $\omega$ lives in no region from $\mathcal{E}=\{C_j\}_{j<\alpha}$ by hypothesis, we have $i\geq\alpha$. Note then that the family $\mathcal{D}:=\{\subseteq\text{-maximal regions from }\{C_j\}_{j<i}\}$ is well-defined by Lemma \ref{WellDefinitionCi} and whose elements are regions in which $\omega$ does not live, precisely by the choice of $i$. Applying the Uncrossing Lemma \ref{SubmodularityCuts} to $K=C'$ and $\mathcal{D}$, we obtain a region $\tilde C\subseteq C$ boundary-linked to $\omega$, nested with every $D\in\mathcal{D}$, and with $|\delta(\tilde C)|\leq|\delta(C')|<|\delta(C)|$. Since $\omega$ lives in $\tilde C$ but in no $C_j$ with $j<i$, each $D\in\mathcal{D}$ is either disjoint from $\tilde C$ or included in $\tilde C$. By Lemma \ref{WellDefinitionCi}, $\tilde C$ is therefore nested with every $C_j$, $j<i$, and is thus an $i$-candidate.

    Due to the definition of $\Omega$ as the ordinal in which Algorithm \ref{ChoiceCi} is finished, we must have $i<\Omega$. Then the choice of $C_i$ for minimizing $|\delta(C_i)|$ as in Algorithm \ref{ChoiceCi} ensures that $|\delta(C_i)|\leq|\delta(\tilde C)|\leq|\delta(C')|$, as well as that $C_i$ itself satisfies the requirements for $C''$ in item (\ref{Algorithm3}): $\omega$ lives in $C_i$ by the choice of $i$, $C_i$ is contained in some region of $\Sigma(C,\mathcal{E})$ by Lemma \ref{WellDefinitionCi}, and $|\delta(C_i)|\leq|\delta(C')|$ as just shown.

\medskip
    Finally, for proving item (\ref{Algorithm4}) from Lemma \ref{LemmaAlgorithm}, suppose $\mathcal{E} = \emptyset$ and fix a region $D'$ of $C$ such that each $D\in \Sigma(C,\mathcal{E})$ is either disjoint from $D'$ or included in $D'$. 
     Then, $D'$ is nested with all elements from $\Sigma'(C,\mathcal{E}) = \{C_j \colon j < \Omega\}$.
     Now consider $D\in \Sigma(C,\mathcal{E})$ with $D\subsetneq D'$. Let $i<\Omega$ be the index for which $D = C_i$, and let $\omega$ be an edge-end to which $C_i$ is boundary-linked but that lives in no $C_j$ with $j<i$. We show $|\delta(D)|<|\delta(D')|$ by distinguishing whether $D'$ is boundary-linked to $\omega$ or not.

     If $D'$ is boundary-linked to $\omega$ too, then $D\cap D'\neq \emptyset$ and $D'$ is itself an $i$-candidate, so the minimality of $|\delta(C_i)|$ from Algorithm \ref{ChoiceCi} gives $|\delta(D')|\geq |\delta(C_i)| = |\delta(D)|$. Equality would contradict the $\subseteq-$maximality of $C_i$ among $i$-candidates of this boundary size, since $C_i \subsetneq D'$. This proves $|\delta(D)|<|\delta(D')|$, as required.

     Otherwise, by Lemma \ref{CharacterizationBoundaryLinked} there exists a region $C'\subseteq D'$ which is boundary-linked to $\omega$ and satisfies $|\delta(C')|<|\delta(D')|$. We claim that $|\delta(C')|\geq |\delta(D)|$: otherwise, Lemma \ref{SubmodularityCuts} would ensure the existence of some $C''\subseteq C$ still boundary-linked to $\omega$, also nested with the $\subseteq-$maximal elements from $\{C_j\}_{j<i}$ and with $|\delta(C'')|\leq |\delta(C')|<|\delta(D)|$. So $C''$ would be an $i$-candidate, contradicting the minimality of $|\delta(C_i)|$ as in Algorithm \ref{ChoiceCi}. Combining the inequalities gives $|\delta(D)|\leq |\delta(C')|<|\delta(D')|$, completing the proof of item (\ref{Algorithm4}).
     \end{proof}

\subsection{Partitioning the undominated edge-end structure}\label{sec:successor}
We now address the construction of a disjoint family $\mathbf{S}(C,\mathcal{E})$ of boundary-linked regions as in Lemma \ref{LemmaKurkofka}. 
To that aim, from now on throughout this subsection, let $C$ be a fixed region of $G$ and $\mathcal{E}$ be a prescribed collection of pairwise disjoint, boundary-linked regions included in $C$.

Denote by $\hat{C}$ the (multi-)graph obtained from $C$ after contracting each $D\in \mathcal{E}$ to a vertex $v_D\in V(\hat{C})$. Note that each such vertex $v_D$ will have finite degree $|\delta(D)|$.
Hence, the vertices of infinite degree in $\hat{C}$ are actually also vertices of infinite degree in $G$. 
Furthermore, we may regard the edges from $E(\hat{C})\cup \delta(C)$ as a subset of $E(\overline{C})$, so that the finite set $X:=\{v\in V(\hat{C}) \colon v\text{ is an endpoint of an edge from }\delta(C)\}$ is well-defined. 

Now let $\mathcal{V} = \{V_t:t\in T\}$ be a fixed tree-cut decomposition of finite adhesion for $\hat{C}$ as in Theorem \ref{DisplayKurkofka}. 
Then, the set of nodes $S:=\{t\in T : V_t\cap X \neq \emptyset\}$ is also finite and every connected component $T'$ of $T\setminus S$ defines a region of $T$. Moreover, the same Theorem \ref{DisplayKurkofka} implies that $\hat{C}_{T'}:=\bigcup_{t\in T'}V_t$ is a region of $\hat{C}$ as well. 
Since every $D\in \mathcal{E}$ is connected, we may uncontract all vertices from $\{v \in V(\hat{C}_{T'}): v=v_D\text{ for some }D\in \mathcal{E}\}$ in order to obtain a region $C_{T'}$ of $C$. 

Furthermore, when $T''$ and $T'$ are distinct connected components of $T \setminus S$, then $\hat{C}_{T''}\cap \hat{C}_{T'} = \emptyset$ and thus $C_{T''}\cap C_{T'} = \emptyset$.

    Now, since $\mathcal{E}$ is a disjoint family and $X\subseteq \bigcup_{t\in S}V_t$, we note that the elements of the auxiliary family $\mathbf{S}'(C,\mathcal{E}):=\{D\in \mathcal{E} \colon v_D\in X\}\cup \{C_{T'} \colon T'\text{ is a connected component of }T\setminus S\}$ are also pairwise disjoint. Due to $\mathcal{V}$ being a near partition of $\hat{C}$, for each $D\in \mathcal{E}$ we either have $D\in \mathbf{S}'(C,\mathcal{E})$ (if $v_D \in X$) or $D\subseteq C_{T'}$ for some connected component $T'$ of $T\setminus S$ (if $v_D\in V_t$ for some $t\in T'$). In the latter case, we note that $\hat{C}_{T'}$ contains no endpoint of an edge from $\delta(C)$ because $T'\cap S=\emptyset$ and, hence, $X\cap \hat{C}_{T'} =  X\cap \bigcup_{t\in T'}V_t = \emptyset$. This proves the first two items of the following lemma:

    \begin{lemma}\label{AuxiliarySuccessorFamily}
        The family $\mathbf{S}'(C,\mathcal{E})$ consists of pairwise disjoint regions and satisfies the three properties below:
        \begin{enumerate}
            \item\label{Auxiliary1} Every $C'\in \mathcal{E}$ is included in some region from $\mathbf{S}'(C,\mathcal{E})$;
            \item\label{Auxiliary2} If $D\in \mathbf{S}'(C,\mathcal{E})$ satisfies $\delta(D)\cap \delta(C) \neq \emptyset$, then $D\in\mathcal{E}$;
            \item\label{Auxiliary3} Every undominated edge-end living in $C$ lives also in some $D\in \mathbf{S}'(C,\mathcal{E})$.
        \end{enumerate}
    \end{lemma}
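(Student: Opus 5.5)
The first two items and the pairwise disjointness were already established in the paragraph preceding the statement. It therefore remains to check that each member of $\mathbf{S}'(C,\mathcal{E})$ is a region, and to prove item (\ref{Auxiliary3}).

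\textbf{Regions.} Members coming from $\mathcal{E}$ are regions by hypothesis. For a component $T'$ of $T\setminus S$, we saw that $\hat{C}_{T'}$ is a region of $\hat{C}$. Uncontracting the connected sets $D\in\mathcal{E}$ with $v_D\in\hat{C}_{T'}$ preserves connectedness.

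For the boundary, I would argue that every edge of $\delta(C_{T'})$ is either an edge of $\delta_{\hat C}(\hat{C}_{T'})$, viewed as an edge of $G$, or an edge of $\delta(C)$ with an endpoint in $C_{T'}$. The latter cannot occur. Such an endpoint would either lie in $X\cap\hat C_{T'}=\emptyset$, or lie in some $D$ with $v_D\in X$, but then $v_D\notin\hat C_{T'}$. Hence $\delta(C_{T'})$ is finite.

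\textbf{Item (\ref{Auxiliary3}).} Fix an undominated edge-end $\omega$ of $G$ living in $C$. If $\omega$ lives in some $D\in\mathcal{E}$, then either $D\in\mathbf{S}'$, or $D\subseteq C_{T'}$ for some component $T'$; in both cases we are done. Otherwise, take a ray $R\in\omega$ with $R\subseteq C$. Since $\omega$ lives in no $D\in\mathcal{E}$, the ray $R$ meets infinitely many members of $\mathcal{E}$ or eventually avoids them. In either case, contracting the members of $\mathcal{E}$ and extracting a ray from the image gives a ray $\hat R$ in $\hat C$. Any finite cut of $\hat C$ pulls back to a finite cut of $G$ together with $\delta(C)$, so two rays are edge-equivalent in $G$ exactly when their images are edge-equivalent in $\hat C$. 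This gives a well-defined edge-end $\hat\omega$ of $\hat C$.

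Next I show that $\hat\omega$ is undominated. A vertex dominating $\hat\omega$ is either a vertex $v_D$ of finite degree, which cannot dominate anything, or a vertex of $G$. In the latter case, finite cuts separating it from $\omega$ in $G$ transfer to $\hat C$, so it would also dominate $\omega$ in $G$, contradicting the assumption.

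Theorem \ref{DisplayKurkofka} then yields that $\hat\omega$ lives in an end of $T$. Since $S$ is finite, the corresponding ray of $T$ eventually lies in a single component $T'$ of $T\setminus S$. So $\hat\omega$ lives in $\hat C_{T'}$ (the relevant adhesion sets are finite), and therefore $\omega$ lives in $C_{T'}$.

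\textbf{Main obstacle.} The delicate step is the transfer of edge-ends, and of undomination, between $G$ restricted to $C$ and the contraction $\hat C$. The concern is the case where $R$ passes through infinitely many contracted regions. I would handle it with the finite-cut correspondence above: a finite $F\subseteq E(\hat C)$, together with $\delta(C)$, is a finite edge set of $G$, and its components correspond under contraction. Using this correspondence for both directions keeps the argument short.
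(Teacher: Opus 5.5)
Your outline is the paper's: extract $\hat R$ from the image of $R$ by K\H{o}nig's Lemma (so $E(\hat R)\subseteq E(R)$), show that $[\hat R]_E$ is undominated in $\hat C$, apply Theorem~\ref{DisplayKurkofka}, and use finiteness of $S$. Your region check and your final step are fine.

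The gap is at the step you yourself flag as delicate: the undomination of $\hat\omega$. The correspondence you describe, $F'\mapsto F'\cup\delta(C)$ for finite $F'\subseteq E(\hat C)$, moves separations from $\hat C$ to $G$. It only shows that a vertex separated from $\hat\omega$ in $\hat C$ is separated from $\omega$ in $G$. Undomination needs the converse: from a finite $F\subseteq E(G)$ separating $v$ from $\omega$, you must produce a finite separator in $\hat C$, and your correspondence says nothing about this. It is not automatic, because $F$ may contain edges inside contracted regions $D\in\mathcal{E}$. Then $F\cap E(\hat C)$ need not separate: a route through $v_D$ in $\hat C$ may have no lift inside $D$ avoiding $F$. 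The same objection applies to the ``exactly when'' in your edge-equivalence claim. That claim is unnecessary anyway; simply put $\hat\omega:=[\hat R]_E$.

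The missing idea, which the paper uses, is to enlarge the separator.
\begin{itemize}
\item Let $\mathcal{E}_F:=\{D\in\mathcal{E}\colon F\cap E(D)\neq\emptyset\}$. This is finite, since the members of $\mathcal{E}$ are disjoint and $F$ is finite.
\item Set $F':=(F\cap E(\hat C))\cup\bigcup_{D\in\mathcal{E}_F}\delta(D)$, which is finite because each $\delta(D)$ is.
\item Choose a tail of $\hat R$ that avoids the finitely many vertices $v_D$ with $D\in\mathcal{E}_F$ and the edges of $F'$. Also require that all its edges lie on a tail of $R$ avoiding $F$; this is possible because $E(\hat R)\subseteq E(R)$ and $F$ is finite.
\item Any path in $\hat C\setminus F'$ from $v$ to this tail passes only through vertices $v_D$ with $D\notin\mathcal{E}_F$. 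Such $D$ are connected and contain no edge of $F$.
\item So the path lifts to a path in $G\setminus F$ from $v$ to a tail of $R$, a contradiction.
\end{itemize}
With this step supplied, the rest of your argument goes through as in the paper.
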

    \begin{proof}
        It only remains to prove item (\ref{Auxiliary3}). 
        For this, let $R$ be a ray in $C$ that belongs to an undominated edge-end $\omega \in \Omega_E(G)$. Due to item (\ref{Auxiliary1}) already proven, we may assume that $\omega$ lives in no region from $\mathcal{E}$. In this case, $R\cap D$ is finite for every $D\in \mathcal{E}$ because the cut $\delta(D)$ is itself finite. For the same reason, the set of edges $E(R)\cap E(\hat{C})$ then forms a connected and locally finite subgraph of $\hat{C}$, so K\H{o}nig's Lemma yields a ray $\hat{R}$ in $\hat{C}$ with $E(\hat{R}) \subseteq E(R)$.

We claim that $\hat{R}$ belongs to an undominated edge-end $\hat{\omega}$ of $\hat{C}$.
       Indeed, we already saw that every vertex $v$ of infinite degree in $\hat{C}$ is a vertex of $C$, and hence does not dominate $\omega$. 
       In particular, there is a finite set of edges $F\subseteq E(C)$ separating $v$ from $\omega$ in $C$. 
       Therefore, after considering $\mathcal{E}_F:=\{D\in \mathcal{E} \colon F\cap E(D) \neq \emptyset\}$, the finite set of edges 
       $$F':=(F\cap E(\hat{C}))\cup\displaystyle \bigcup_{\substack{D\in \mathcal{E}_F}}\delta(D)$$
        separates $v$ from $\hat{R}$ in $\hat{C}$. Indeed, any path in $\hat{C}\setminus F'$ from $v$ to a tail of $\hat{R}$ would lift to a path in $C$ by replacing each traversal of a vertex $v_D$ with a path inside $D$ (which exists because $D$ is connected), and this lift would use no edge of $F$. 
        Since $\mathcal{E}_F$ and $F$ are finite and $\hat{R}$ is a ray, we may choose a tail of $\hat{R}$ that avoids both the finitely many vertices $\{v_D \colon D\in\mathcal{E}_F\}$ and the finitely many edges of $F\cap E(\hat{C})$. Similarly, extending along such a tail in the same fashion would connect $v$ to a tail of $R$ in $G\setminus F$, contradicting that $F$ separates $v$ from $\omega$. Hence, $\hat{R}$ belongs to an undominated edge-end of $\hat{C}$ as well.
       
      Therefore,  Theorem \ref{DisplayKurkofka} ensures that this edge-end $\hat{\omega}$ is displayed by some ray $R_{\hat{\omega}}$ of $T$.
      As $S$ is finite, $R_{\hat{\omega}}$ has a tail in some connected component $T'$ of $T\setminus S$, 
      while the ray $\hat{R}$ thus has a tail in the corresponding region $\hat{C}_{T'}$ of $\hat{C}$. 
      In particular, infinitely many edges from $E(\hat{R})\subseteq E(R)$ must have both endpoints in $C_{T'}\in \mathbf{S}'(C,\mathcal{E})$, concluding that $\omega$ lives in this region $C_{T'}$ of $C$ because $\delta(C_{T'})$ is finite.      
    \end{proof}

Comparing the above statement with Lemma \ref{LemmaKurkofka}, we see that the family $\mathbf{S}'(C,\mathcal{E})$ falls short of the requirements of Lemma \ref{LemmaKurkofka} as its members are not necessarily boundary-linked regions. 
Nevertheless, relying on the previous Lemma \ref{LemmaAlgorithm}, we shall finish the current subsection by arguing that this condition can be verified once we assume $\mathcal{E}$ to also satisfy $(\dagger)$:

\begin{proof}[Proof of Lemma \ref{LemmaKurkofka}]
    Suppose that $\mathcal{E}$ satisfies property $(\dagger)$ from Lemma \ref{LemmaKurkofka}, which we recall below for convenience:
    \begin{center}
        $(\dagger)$ If some region $D'$ of $C$ is such that each $C'\in \mathcal{E}$ is either disjoint from $D'$ or included in $D'$, then $|\delta(C')|<|\delta(D')|$ for every $C'\in \mathcal{E}$ with $C'\subsetneq D'$.
    \end{center}

    Let $\mathbf{U}$ consist of all regions from $\mathbf{S}'(C,\mathcal{E})$ that are not boundary-linked.
    Given $D\in \mathbf{U}$, we define $\mathcal{E}_D:=\{D'\in \mathcal{E} : D'\subseteq D\}$. As a subset of $\mathcal{E}$, this is a disjoint family of boundary-linked regions. Since every member of $\mathcal{E}$ is boundary-linked but $D\in\mathbf{U}$ is not, $D\notin\mathcal{E}$ and, thus, $D'\subsetneq D$ for every $D'\in \mathcal{E}_D$. On the other hand, item (\ref{Auxiliary1}) of Lemma \ref{AuxiliarySuccessorFamily} places each $D'\in \mathcal{E}\setminus \mathcal{E}_D$ inside some element of $\mathbf{S}'(C,\mathcal{E})\setminus\{D\}$, which is disjoint from $D$. In particular, $D'\cap D=\emptyset$ as well. Thus, each element of $\mathcal{E}$ is either disjoint from $D$ or included in $D$, so property $(\dagger)$ applies and yields $|\delta(D')|<|\delta(D)|$ for every $D'\in \mathcal{E}_D$. We are therefore under the hypotheses of Lemma \ref{LemmaAlgorithm}, which furnishes a disjoint family $\Sigma(D,\mathcal{E}_D)$ of boundary-linked regions satisfying (\ref{Algorithm1})--(\ref{Algorithm4}).

    Set
    \[
        \mathbf{S}(C,\mathcal{E}):=\bigl(\mathbf{S}'(C,\mathcal{E})\setminus \mathbf{U}\bigr)\cup \bigcup_{D\in\mathbf{U}}\Sigma(D,\mathcal{E}_D).
    \]
    Members of $\mathbf{S}'(C,\mathcal{E})\setminus\mathbf{U}$ are boundary-linked regions of $C$ by definition of $\mathbf{U}$ and by Lemma \ref{AuxiliarySuccessorFamily}, while members of each $\Sigma(D,\mathcal{E}_D)$ are boundary-linked regions of $D\subseteq C$ by Lemma \ref{LemmaAlgorithm}. 
    As members of $\mathbf{S}'(C,\mathcal{E})$ are pairwise disjoint by Lemma \ref{AuxiliarySuccessorFamily}, and each $\Sigma(D,\mathcal{E}_D)$ individually consists of pairwise disjoint regions included in $D$ by Lemma \ref{LemmaAlgorithm}, we know that all members of $\mathbf{S}(C,\mathcal{E})$ are pairwise disjoint.
    
    We now verify that $\mathbf{S}(C,\mathcal{E})$ satisfies all properties required in Lemma \ref{LemmaKurkofka}:
    
    For \emph{Item (\ref{Kurkofka1})}, fix $C'\in \mathcal{E}$. By item (\ref{Auxiliary1}) of Lemma \ref{AuxiliarySuccessorFamily}, $C'\subseteq D_0$ for some $D_0\in \mathbf{S}'(C,\mathcal{E})$. If $D_0\notin\mathbf{U}$, then $D_0\in \mathbf{S}(C,\mathcal{E})$ and we are done. Otherwise, $D_0\in\mathbf{U}$ and $C'\in \mathcal{E}_{D_0}$, so item (\ref{Algorithm1}) of Lemma \ref{LemmaAlgorithm} supplies some $D''\in \Sigma(D_0,\mathcal{E}_{D_0})\subseteq\mathbf{S}(C,\mathcal{E})$ with $C'\subseteq D''$.

    For \emph{Item (\ref{Kurkofka2})}, suppose that $D'\in \mathbf{S}(C,\mathcal{E})$ satisfies $\delta(D')\cap\delta(C)\neq\emptyset$. If $D'\in \mathbf{S}'(C,\mathcal{E})\setminus\mathbf{U}$, item (\ref{Auxiliary2}) of Lemma \ref{AuxiliarySuccessorFamily} gives $D'\in \mathcal{E}$. If instead $D'\in \Sigma(D,\mathcal{E}_D)$ for some $D\in\mathbf{U}$, then the chain $D'\subseteq D\subseteq C$ shows that any edge in $\delta(D')\cap\delta(C)$ also lies in $\delta(D)$, so $\delta(D)\cap\delta(C)\neq\emptyset$. Item (\ref{Auxiliary2}) then yields $D\in \mathcal{E}$ --- contradicting that $\mathcal{E}$-members are boundary-linked while $D\in\mathbf{U}$ is not. This second case therefore does not occur.

    For \emph{Item (\ref{Kurkofka3})}, let $\omega$ be an undominated edge-end living in $C$. By item (\ref{Auxiliary3}) of Lemma \ref{AuxiliarySuccessorFamily}, $\omega$ lives in some $D_0\in \mathbf{S}'(C,\mathcal{E})$. If $D_0\notin\mathbf{U}$, then $D_0\in \mathbf{S}(C,\mathcal{E})$ and we are done. We have $D_0\in\mathbf{U}$ otherwise, so $D_0$ is boundary-linked to no undominated edge-end --- in particular not to $\omega$ --- and the final assertion of Lemma \ref{LemmaAlgorithm} then places $\omega$ in some member of $\Sigma(D_0,\mathcal{E}_{D_0})\subseteq\mathbf{S}(C,\mathcal{E})$.
\end{proof}

\subsection{Building up a linked tree-cut decomposition }\label{subsec:buildingup}

We are now ready to construct the linked tree-cut decomposition $(T,\mathcal{V})$ for a connected graph $G$ as claimed by our main Theorem \ref{Main}. 
The desired tree-cut decomposition will arise as the limit of a sequence $\{(T_n,\mathcal{V}_n)\}_{n\in\mathbb{N}}$ of tree-cut decompositions of $G$, each of which satisfies:
\begin{itemize}
    \item[(i)] $T_n$ is a rayless tree whose leaves $t\in L(T_n):=\{\text{leaves of }T_n\}$, for $n\geq 1$, correspond to boundary-linked regions $V_t^n$ of $G$;
    \item[(ii)] every undominated edge-end $\omega\in\Omega_E(G)$ lives in $V_t^n$ for some leaf $t\in L(T_n)$.
\end{itemize}
To start the inductive procedure, we let $(T_0,\mathcal{V}_0)$ be the trivial tree-cut decomposition of $G$ where $T_0:=\{t_0\}$ and $V_{t_0}^0:=V(G)$. For constructing $(T_1,\mathcal{V}_1)$, we fix a successor family $\mathbf{S}_0:=\mathrm{S}(G,\emptyset)$ as provided by Lemma \ref{LemmaKurkofka} when considering $\mathcal{E} = \emptyset$. Let $T_1:=\{t_0\}\cup\mathbf{S}_{0}$ be the tree with root $t_0$ and one neighbour of the root for every $D \in \mathbf{S}_{0}$.
    Define $V_{t_0}^1:=V(G)\setminus\bigcup\mathbf{S}_{0}$ and $V_{D}^1:=D$ for every $D\in\mathbf{S}_{0}$. 
    By item (\ref{Kurkofka3}) in Lemma \ref{LemmaKurkofka}, $\mathcal{V}_1=\{V_t^1 \colon t\in T_1\}$ is then a tree-cut decomposition of $G$ satisfying the above items (i) and (ii).

We now assume by induction that $(T_n,\mathcal{V}_n)$ satisfying (i) and (ii) has been defined for some $n\geq 1$.
For every leaf $t\in L(T_n)$, Lemma \ref{LemmaAlgorithm} yields a cutting family $\mathcal{E}_t:=\Sigma(V_t^n)$, and Lemma \ref{LemmaKurkofka} yields a successor family $\mathbf{S}_t:=\mathbf{S}(V_t^n,\mathcal{E}_t)$ for $V_t^n$.
Similar to the previous paragraph, consider the extension of $T_n$ given by $T_{n+1}:=T_n\cup \bigcup_{t\in L(T_n)}\mathbf{S}_t$, where for every $t\in L(T_n)$ the family $\mathbf{S}_t$ comprises precisely the successors of $t$ in $T_{n+1}$.
Setting 
\begin{itemize}
\item $V_D^{n+1}:=D$ for every $t \in L(T_n)$ and $D\in \mathbf{S}_t$,
\item $V_t^{n+1}:=V_t^{n}\setminus \bigcup \mathbf{S}_t$ for every $t \in L(T_n)$, and 
\item $V_s^{n+1}:=V_s^n$ if $s\in T_n\setminus L(T_n)$, 
\end{itemize}
we obtain a tree-cut decomposition $(T_{n+1},\mathcal{V}_{n+1})$.
Again due to item (\ref{Kurkofka3}) in Lemma \ref{LemmaKurkofka}, each undominated edge-end of $G$ living in $V_t^n$ for some $t\in L(T_n)$ must now live in $V_{s}^{n+1}$ for some successor $s$ of $t$ in $T_{n+1}$. 
This completes the inductive step.

\medskip
At the end of this recursive process, fix the limit tree $T:=\bigcup_{n\in\mathbb{N}} T_n$ and set $V_t:=V_t^{n_t+1}$ for every $t\in T$. Adopting the convention $L(T_0) := \{t_0\}$, for every node $t\in T$ the index $n_t:=\min\{n\in\mathbb{N}: t\in T_n\}$ is the unique one for which $t\in L(T_{n_t})$. Then, if $s,t\in T$ are distinct, $V_t$ and $V_s$ are also disjoint bags from the tree-cut decomposition $\mathcal{V}_{n_s+n_t+1}$. Moreover, the region $V_t^{n_t}$  of $G$ can now be alternatively described as follows:

\begin{lemma}\label{WellDefinitionCt}
    For every $t \in T$, we have $V_t^{n_t} = \bigcup_{s\geq t}V_s $. 
\end{lemma}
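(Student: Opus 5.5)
We prove both inclusions by tracking how the bags evolve through the sequence $(T_n,\mathcal{V}_n)$. The key invariant, checked by induction on $m\geq n_t$, is
\[
V_t^{n_t} \;=\; \bigcup\bigl\{V_s^{m} : s\in T_m,\ s\geq t\bigr\}.
\]
For $m=n_t$ the right-hand side is just $V_t^{n_t}$, because $t$ is a leaf of $T_{n_t}$ and so has no proper descendants in $T_{n_t}$. For the step $m\to m+1$, bags of non-leaves are unchanged. Each leaf $u\geq t$ of $T_m$ has its bag $V_u^m$ split as $V_u^{m+1}\sqcup\bigsqcup_{D\in\mathbf{S}_u}V_D^{m+1}$. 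This uses that the members of $\mathbf{S}_u$ are pairwise disjoint regions contained in $V_u^m$, as given by Lemma \ref{LemmaKurkofka}. The new nodes $D\in\mathbf{S}_u$ are exactly the new descendants of $t$, so the union is preserved.

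For the inclusion $\bigcup_{s\geq t}V_s\subseteq V_t^{n_t}$, take $s\geq t$. Then $V_s=V_s^{n_s+1}$, and $s\geq t$ in $T_{n_s+1}$, so the invariant with $m=n_s+1$ gives $V_s\subseteq V_t^{n_t}$. Here we need that bags of nodes that are no longer leaves stay fixed. This is true since $V_s^{m}=V_s^{n_s+1}$ for all $m\geq n_s+1$: the node $s$ stops being a leaf at stage $n_s+1$, so its bag never changes afterwards.

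The reverse inclusion is the main obstacle. Fix $v\in V_t^{n_t}$. By the invariant, for each $m\geq n_t$ there is a unique $s_m\geq t$ in $T_m$ with $v\in V_{s_m}^m$. If $s_m$ fails to be a leaf of $T_m$ for some $m$, then $V_{s_m}=V_{s_m}^m\ni v$ and we are done. Otherwise $s_m$ is always a leaf, so $s_{m+1}$ is a successor of $s_m$. In this case we obtain a ray $t\leq s_{n_t}<s_{n_t+1}<\cdots$ in $T$ with $v\in V_{s_m}^{m}$ for all $m$. The bag $V_{s_m}^m$ is a region $D\in\mathbf{S}_{s_{m-1}}$, so $v$ lies in a strictly decreasing sequence of regions $D_m$.

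To rule this out, I would show that $v$ can lie in only finitely many of the regions $D_m$. Pick $v$ and the root bag vertex (or, if $v$ lies in every $D_m$, any vertex outside $V_t^{n_t}$, which exists unless $t=t_0$ and $G$ is trivially handled). Join them by a finite path $P$. Then $E(P)$ must meet every $\delta(D_m)$. The difficulty is that, unlike in the proof of Lemma \ref{CharacterizationBoundaryLinked}, these cuts need not be pairwise disjoint, so a length argument does not apply directly.

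I would try to use item (\ref{Kurkofka2}) of Lemma \ref{LemmaKurkofka} together with item (\ref{Algorithm2}) of Lemma \ref{LemmaAlgorithm}. A region $D_{m+1}$ shares a boundary edge with $D_m$ only if $D_{m+1}\in\Sigma(D_m)$, and then $|\delta(D_{m+1})|<|\delta(D_m)|$. Hence any fixed edge lies in $\delta(D_m)$ for only finitely many consecutive $m$: along such a run the cut sizes strictly decrease, so the run has length at most $|\delta(D_{m_0})|+1$. It remains to show that an edge which leaves the boundaries cannot reappear later. This holds because an edge of $\delta(D_{m'})$ with $m'>m+1$ that is not in $\delta(D_{m+1})$ has both endpoints in $D_{m+1}$, while an edge $e\notin\delta(D_{m+1})$ that lies in $\delta(D_m)$ has an endpoint outside $D_m\supseteq D_{m+1}$. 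Therefore each edge of the finite path $P$ lies in only finitely many $\delta(D_m)$, which contradicts $E(P)$ meeting all of them. This gives $v\in V_s$ for some $s\geq t$, completing the equality.
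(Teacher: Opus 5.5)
Your proposal is correct and follows essentially the paper's route: the forward inclusion comes from the bags shrinking along the path from $t$ to $s$ (you package this as a stage-by-stage invariant), and the reverse inclusion builds a ray of nested regions containing $v$, joins $v$ to a vertex of the root bag by a finite path $P$, and reaches a contradiction via item (\ref{Kurkofka2}) of Lemma \ref{LemmaKurkofka} together with item (\ref{Algorithm2}) of Lemma \ref{LemmaAlgorithm}. Your observation that an edge which leaves the boundaries $\delta(D_m)$ can never reappear is what justifies the paper's brief claim that some edge of $P$ lies in all but finitely many of the cuts.
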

\begin{proof}
    First, fix a pair $s,t \in T$ with $s\geq t$ and, for some $k\in\mathbb{N}$, denote by $t_1t_2t_3\dots t_k$ the unique path in $T$ with $t = t_1$ and $s=t_k$. In this case, $t_{i+1}$ is a successor of $t_i$ for every $1\leq i < k$, whence $n_{t_{i+1}} = n_{t_i}+1$ and $V_{t_{i+1}}^{n_{t_{i+1}}}\in \mathbf{S}_{t_i}$ by construction.
    Then, the inclusions $V_{s} = V_{t_k} \subseteq V_{t_k}^{n_{t_k}}\subseteq V_{t_{k-1}}^{n_{t_{k-1}}}\subseteq \dots \subseteq V_{t_1}^{n_{t_1}}$ immediately imply that $V_s\subseteq V_t^{n_t}$, giving the inclusion $\bigcup_{s\geq t}V_s \subseteq V_{t}^{n_t}$.

 To conclude the reverse inclusion, suppose for a contradiction that there exists some `uncovered' vertex $v\in V_t^{n_t}\setminus \bigcup_{s\geq t}V_s$. Let $s_0=t$, and suppose for some $k\in\mathbb{N}_0$, we have already defined a path $s_0s_1s_2s_3\dots s_k$ in $T$ such that $s_0<s_1<s_2<\dots<s_k$ and $v\in V_{s_i}^{n_{s_i}}$.
 Since $v\in V_{s_k}^{n_{s_k}}\setminus V_{s_k}$ and $\mathbf{S}_{s_k}$ is a partition of $V_{s_k}^{n_{s_k}}\setminus V_{s_k}$, there must indeed exist a successor $s_{k+1}$ of $s_k$ such that $v\in V_{s_{k+1}}^{n_{s_{k}}+1} = V_{s_{k+1}}^{n_{s_{k+1}}}$. 
 Thus, this recursive process results in a ray $s_0s_1s_2\ldots$ in $T$.
  
Writing $C_k:=V_{s_k}^{n_{s_k}}$, we thus obtain a sequence $\{C_k\}_{k\in\mathbb{N}}$ of regions of $G$ such that $C_{k+1}\in \mathbf{S}_{s_k} = \mathbf{S}(C_k,\Sigma(C_k))$ and $v\in C_k $ for every $k\in\mathbb{N}$. In particular, we have $C_0\supseteq C_1\supseteq \dots$ and $v\in \bigcap_{k\in\mathbb{N}}C_k$.

    However, since $G$ is connected, there is some finite path $P$ from $v$ to $v_0\in V_{t_0}$. 
    Since $v_0\notin C_k$ for $k\geq 1$, we have $E(P)\cap \delta(C_k)\neq \emptyset$ for every $k\in\mathbb{N}_{\geq 1}$. 
    Since the path $P$ is finite, some edge $e\in E(P)$ must belong to all but finitely many $\delta(C_k)$. 
    Hence, we have $\delta(C_k)\cap \delta(C_{k+1})\neq \emptyset$ for all sufficiently large $k \in \mathbb{N}$. 
    But then item (\ref{Kurkofka2}) in Lemma \ref{LemmaKurkofka} ensures that $C_{k+1}\in \Sigma(C_k)$. 
    This, however, is a contradiction, as now the inequality $|\delta(C_{k+1})|<|\delta(C_k)|$ holds for every such $k$ by item (\ref{Algorithm2}) in Lemma \ref{LemmaAlgorithm}. 
\end{proof}

Following the notation from Subsection \ref{subsec:tree-cut}, Lemma \ref{WellDefinitionCt} allows us to define $C_t:=\bigcup_{s\geq t}V_s = V_t^{n_t}$ as a region of $G$ for each $t\in T$. Moreover, by construction of $(T_{n_t+1},\mathcal{V}_{n_t+1})$, the successor family $\mathbf{S}_t = \{V_s^{n_t+1} : s \text{ is a successor of } t \text{ in } T_{n_t+1}\}$ can alternatively be written as $\{C_s : s \text{ is a successor of } t \text{ in } T\}$ for every $t\in T$. We conclude Section \ref{sec:proof} by verifying that $(T,\mathcal{V})$ has all the properties claimed in Theorem \ref{Main}, thus finishing its proof.

\begin{claim}
    $(T,\mathcal{V})$ is a componental tree-cut decomposition of $G$ of finite adhesion.
\end{claim}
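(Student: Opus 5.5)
We have to check three things: $(T,\mathcal{V})$ is a tree-cut decomposition of $G$, it is componental, and it has finite adhesion. Throughout we take $G$ connected, as the construction does.

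\textbf{$T$ is a tree and the bags partition $V(G)$.} $T=\bigcup_n T_n$ is an increasing union of trees, each obtained from the previous one by attaching new leaves, so $T$ is a rooted tree with root $t_0$. Two distinct bags $V_s,V_t$ are disjoint because both are bags of the single decomposition $\mathcal{V}_{n_s+n_t+1}$, as noted before Lemma~\ref{WellDefinitionCt}. For coverage, apply Lemma~\ref{WellDefinitionCt} to the root. Since $n_{t_0}=0$, it gives $V(G)=V^0_{t_0}=C_{t_0}=\bigcup_{s\in T}V_s$. So $\mathcal{V}$ is a near partition of $V(G)$ indexed by $T$.

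\textbf{Componental.} By Lemma~\ref{WellDefinitionCt}, $C_t=V_t^{n_t}$ for every $t\in T$. For $t=t_0$ this set is $V(G)$, which is non-empty and connected. For $t\neq t_0$ with parent $p$, the set $V_t^{n_t}$ is a member $D$ of the successor family $\mathbf{S}_p$. By Lemma~\ref{LemmaKurkofka}, every such $D$ is a boundary-linked region, hence $G[D]$ is connected. It is also non-empty, since a boundary-linked region contains a tail of some ray of the end it is linked to.

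\textbf{Finite adhesion.} Take an edge $st\in E(T)$ with $s<t$. As remarked in Subsection~\ref{subsec:tree-cut}, its adhesion set is $F_{st}=\delta(C_t)$, because the component of $T-st$ containing $t$ is $\lfloor t\rfloor$. By the previous step, $C_t$ is a region, so $\delta(C_t)$ is finite.

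The only delicate point is that Lemma~\ref{WellDefinitionCt} applies to the limit decomposition. It does: that lemma's proof rules out vertices lying in infinitely many nested $C_k$, and this is exactly what makes both the coverage step and the identification $C_t=V_t^{n_t}$ valid. Given the lemma, everything else is bookkeeping.
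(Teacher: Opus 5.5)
Your proof is correct and follows the paper's argument essentially line by line. Coverage comes from Lemma~\ref{WellDefinitionCt} at the root together with the disjointness remark preceding it, finite adhesion comes from $F_{st}=\delta(C_t)$ with $C_t$ a region, and componentality comes from $C_t=V_t^{n_t}$ being either $V(G)$ or a boundary-linked member of a successor family; your extra remarks (that $T$ is a tree, and why boundary-linked regions are non-empty) only spell out what the paper leaves implicit.
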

\begin{proof}[Proof of the claim]
    Applying Lemma \ref{WellDefinitionCt} at $t=t_0$ with the convention $V_{t_0}^0 := V(G)$, we have $V(G) = V_{t_0}^0 = \bigcup_{s\in T} V_s$, so $\mathcal{V}$ covers $V(G)$. Together with the disjointness noted above Lemma \ref{WellDefinitionCt}, this makes $\mathcal{V}=\{V_t:t\in T\}$ a partition of $V(G)$.

    In addition, for every edge $e=st\in E(T)$ with $s<t$, the adhesion set $F_e$ can also be written as $\delta\left(\bigcup_{s'\geq t}V_{s'}\right) = \delta(C_t)$, which is finite since $C_t$ is a region of $G$.

    Finally, the region $C_t$ is connected for every $t\in T$, and boundary-linked (hence non-empty) for every $t\neq t_0$.
\end{proof}

\begin{claim}
    $(T,\mathcal{V})$ displays the undominated edge-ends of $G$.
\end{claim}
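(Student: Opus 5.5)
We must show three things: every undominated edge-end lives in an end of $T$; no edge-end lives in a node of $T$ unless it is dominated; and each end of $T$ contains exactly one edge-end, which is undominated.

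\emph{Undominated edge-ends map to ends of $T$.} Fix an undominated $\omega$. Property (ii) of the sequence $(T_n,\mathcal{V}_n)$, together with item (\ref{Kurkofka3}) of Lemma \ref{LemmaKurkofka}, gives for every $n$ a leaf $t_n\in L(T_n)$ with $\omega$ living in $V^n_{t_n}=C_{t_n}$. By construction $t_{n+1}$ is a successor of $t_n$, so $t_0t_1t_2\dots$ is a rooted ray $R_\omega$ of $T$. Orienting each $T$-edge $t_nt_{n+1}$ towards $C_{t_{n+1}}$ shows $\varphi(\omega)=[R_\omega]$. In particular, $\omega$ lives in no single bag.

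\emph{Each end of $T$ contains exactly one edge-end.} Let $R=t_0t_1\dots$ be a rooted ray of $T$, and write $C_n:=C_{t_n}$.

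For uniqueness, suppose $\omega\neq\omega'$ both live in every $C_n$. The key step is to show that $|\delta(C_n)|$ does not tend to infinity along $R$. For every $n\ge 1$, $C_{n+1}\in\mathbf{S}(C_n,\Sigma(C_n))$. Either $C_{n+1}\in\Sigma(C_n)$, which forces $|\delta(C_{n+1})|<|\delta(C_n)|$, or $C_{n+1}$ comes from the Kurkofka-type part of Lemma \ref{LemmaKurkofka} and satisfies $\delta(C_{n+1})\cap\delta(C_n)=\emptyset$ by item (\ref{Kurkofka2}). Since strict decreases can only occur finitely often in a row, there are infinitely many $n$ where the second case occurs.

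Next, fix $n$ with $|\delta(C_m)|\ge|\delta(C_n)|$ for all $m\ge n$, chosen as in the proof of Theorem \ref{DisplayEdgeDegrees}. Such $n$ exist with $\liminf|\delta(C_m)|=\kappa$. If $\kappa<\infty$, Lemma \ref{LinkedDisplay} applies. It is already valid at this point, since linkedness is not used in its ray-construction beyond what we can supply inside $C_n$ via Lemma \ref{CharacterizationBoundaryLinked}. Alternatively, one can argue directly: $C_n$ is boundary-linked to both $\omega$ and $\omega'$. Since $\omega\ne\omega'$, some finite $F$ separates them. Choosing $m$ large so that $C_m$ misses the endpoints of $F$ gives $F\cap E(\overline{C_m})=\emptyset$, and then $G[C_m]$ connected forces both ends into the same component of $G\setminus F$, a contradiction. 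This last argument needs no degree bound at all: it only uses that the $C_m$ are connected and that $\bigcap_m C_m=\emptyset$, which is exactly the argument in the proof of Lemma \ref{WellDefinitionCt}. So uniqueness reduces to showing $\bigcap_m C_m=\emptyset$ and that every finite vertex set eventually avoids $C_m$. The latter follows because each vertex lies in a bag $V_s$, and $s\notin R$ or $s=t_k$ gives $v\notin C_m$ for $m>k$.

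For existence, pick $v_m\in C_m$ (nonempty by componentality) and paths inside $G[C_m]$ from $v_m$ to $v_{m+1}$. Their concatenation is a connected locally-finite-in-the-limit subgraph. More carefully, we apply K\H{o}nig-style reasoning: the union $H$ of these paths is connected and infinite, and each vertex lies in only finitely many $C_m$. Hence every vertex meets only finitely many of the paths, so $H$ is locally finite and contains a ray, which has a tail in every $C_m$. Its edge-end lives in every $C_m$.

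\emph{The edge-end in an end of $T$ is undominated.} Suppose $v$ dominates the edge-end $\omega$ living in every $C_m$. Then $v\in V_s$ for some $s$, and for $m$ large, $v\notin C_m$ while $\delta(C_m)$ is finite and separates $v$ from $\omega$. This contradicts domination.

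\emph{Expected obstacle.} The main difficulty is existence of a ray in each end of $T$: one must check that $H$ is genuinely locally finite, which uses that each vertex lies in only finitely many $C_m$ and that every edge at $v$ lies in finitely many of the chosen paths, because each path from $v_m$ to $v_{m+1}$ lies inside $C_m$.
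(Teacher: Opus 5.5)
Your proof is correct once the material at the start of the uniqueness paragraph is discarded (see below), and it differs from the paper's mainly in the existence step. The construction of $R_\omega$ via item~(\ref{Kurkofka3}) of Lemma~\ref{LemmaKurkofka} matches the paper. So does the uniqueness argument: choose $m$ with $C_{t_m}$ avoiding all endpoints of a separating $F$, then use that $G[C_{t_m}]$ is connected.

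For existence, the paper extracts a subsequence with $C_{k+1}$ avoiding a chosen $v_k\in C_k$. It then rules out a vertex sending infinitely many edge-disjoint paths to $\{v_k\}$ and invokes the Star-Comb Lemma. You instead take the union $H$ of paths $P_m\subseteq G[C_{t_m}]$ from $v_m$ to $v_{m+1}$. Since every vertex lies in only finitely many $C_{t_m}$, $H$ is infinite and locally finite, and K\H{o}nig's Lemma gives a ray. The tail property is then immediate from $V(H)\setminus C_{t_m}\subseteq V(P_0\cup\dots\cup P_{m-1})$. This avoids both the subsequence and the Star-Comb Lemma. Your undominatedness argument is also more direct than the paper's: $\delta(C_{t_m})$ separates any fixed vertex from the end once $m$ is large. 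It applies to every edge-end living along $R$, which is exactly what $\varphi^{-1}(\Omega(T))\subseteq\Psi$ requires.

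You should delete everything in the uniqueness paragraph before ``Alternatively''.
\begin{itemize}
\item The announced key step, that $|\delta(C_{t_n})|$ does not tend to infinity, is false in general. If the displayed edge-end has infinite edge-degree (e.g.\ the end of a ray whose $n$-th edge is replaced by $n+1$ parallel edges), then $|\delta(C_{t_n})|\to\infty$. Indeed, any $k$ edge-disjoint rays of that end must cross $\delta(C_{t_n})$ in distinct edges once $C_{t_n}$ avoids their starting vertices.
\item Your dichotomy does not bound boundary sizes anyway: in the case $\delta(C_{t_{n+1}})\cap\delta(C_{t_n})=\emptyset$ the boundary may grow.
\item Invoking Lemma~\ref{LinkedDisplay} there is not justified. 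That lemma assumes linkedness, which is only proved in the next claim, and its ray construction genuinely uses it. It also presupposes that $\varphi^{-1}([R])$ is a single edge-end, which is what you are proving.
\item The assertion that $C_{t_n}$ is boundary-linked to both $\omega$ and $\omega'$ is unsupported.
\end{itemize}
The argument you actually complete uses none of this, so it is dead weight rather than a gap.
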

\begin{proof}
    To verify this, we need to check that every undominated edge-end of $G$ lives in some end of $T$ and that, conversely, every end of $T$ hosts exactly one edge-end of $G$, which is moreover undominated.

We begin with the first direction. Let $\omega \in \Omega_E(G)$ be undominated. By item (\ref{Kurkofka3}) in Lemma \ref{LemmaKurkofka}, $\omega$ lives in $C_{t_1}\in \mathbf{S}_{t_0}$ for some successor $t_1$ of $t_0$. Inductively, if a finite path $t_0t_1\dots t_n$ in $T$ has been defined for which $\omega$ lives in $C_{t_n}$, then since $\mathbf{S}_{t_n}=\{C_s:s\text{ is a successor of }t_n\}$ is a successor family, applying item (\ref{Kurkofka3}) again yields $\omega$ living in $C_{t_{n+1}}$ for some successor $t_{n+1}$ of $t_n$. The resulting ray $R_\omega:=t_0t_1t_2\dots$ in $T$ then satisfies $\varphi_{\mathcal{T}}(\omega) = [R_\omega]_E\in\Omega(T)$, so $\omega$ lives in the end $[R_\omega]_E$ of $T$.

Turning to the converse, fix an end of $T$ represented by a ray $R:=t_0t_1t_2\dots$ at $t_0$. We first construct an edge-end of $G$ that lives in $\varphi_{\mathcal{T}}^{-1}([R]_E)$. Since each $C_{t_n}$ is a non-empty region of $G$ by the componental property verified previously, and since $\bigcap_{n\in\mathbb{N}} C_{t_n} = \emptyset$ (because $\mathcal{V}$ is a partition of $V(G)$), we may inductively extract a subsequence $\{t_{n_k}\}_{k\in\mathbb{N}}$ by choosing $n_{k+1}>n_k$ so that $C_{k+1}:=C_{t_{n_{k+1}}}$ avoids some previously fixed $v_k\in C_k:=C_{t_{n_k}}$. Each such $C_{k+1}$ is then non-empty and strictly contained in $C_k$. No vertex $v\in V(G)$ can be connected to infinitely many of the $v_k$ by a system of infinitely many edge-disjoint paths: otherwise, since $v\notin C_k$ for all sufficiently large $k$ while $C_k$ contains all but finitely many $v_i$, we would contradict the finiteness of $\delta(C_k)$. The Star-Comb Lemma (see Lemma 8.2.2 in \cite{DiestelBook}) hence yields a ray $S\subseteq G$ connected to infinitely many $v_k$ by pairwise edge-disjoint paths. Combined with the finiteness of each $\delta(C_k)$ and the fact that $C_k$ contains all but finitely many $v_i$, this forces a tail of $S$ in every $C_k$, and therefore in every $C_{t_n}$. In other words, $\varphi_{\mathcal{T}}([S]_E) = [R]_E$.

Moreover, as no vertex $v\in V(G)$ can be connected to infinitely many of the $v_k$, it follows that this edge-end is undominated.

To check that $[S]_E$ is the only edge-end of $G$ mapping to $[R]_E$, let $S'$ be a second ray in $G$ with $\varphi_{\mathcal{T}}([S']_E)=[R]_E$, so that $S'$ also has a tail in every $C_{t_n}$. For any finite $F\subseteq E(G)$, fix $n$ large enough that $C_{t_n}$ contains no endpoint of an edge from $F$. The connectedness of $C_{t_n}$ (as ensured by the componental property) then places tails of $S$ and $S'$ in the same connected component of $G\setminus F$. Hence $S$ and $S'$ are edge-equivalent, i.e., $[S]_E=[S']_E$.

\end{proof}

\begin{claim}
    $(T,\mathcal{V})$ is linked.
\end{claim}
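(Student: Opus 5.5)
The plan is to reduce linkedness to a cut condition via Menger's theorem and then rule out small cuts using the minimality properties built into the cutting and successor families. Fix a path $t_0t_1\dots t_n$ with $t_0<t_1<\dots<t_n$, and put $\kappa:=\min\{|\delta(C_{t_i})|:1\le i\le n\}$. Subdivide every edge of $\delta(C_{t_1})\cup\delta(C_{t_n})$, or equivalently run the edge version of Menger's theorem between the two finite edge sets. It then suffices to show that every finite edge set $F$ separating $\delta(C_{t_1})$ from $\delta(C_{t_n})$ has $|F|\geq\kappa$.

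Given a separator $F$ of minimum size with $|F|<\kappa$, I would pass to a region $X$ with $C_{t_n}\subseteq X\subseteq C_{t_1}$ and $|\delta(X)|\le |F|<\kappa$. Here $X$ is the component of $G[C_{t_1}]\setminus F$ meeting $C_{t_n}$, enlarged by $C_{t_n}$; this works because $C_{t_n}$ is connected, so $\delta(C_{t_n})$ lies on one side. Since $t_n\neq t_0$, the region $C_{t_n}$ is boundary-linked to some undominated edge-end. Fixing a suitable such end $\omega$, it lives in $C_{t_n}$ and hence in $X$.

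Next I would take the largest index $j<n$ with $j\ge1$ for which a region $Y\subseteq C_{t_j}$ exists in which $\omega$ lives with $|\delta(Y)|<|\delta(C_{t_j})|$. Taking $j=1$ and $Y=X$ shows such indices exist, because $|\delta(X)|<\kappa\le|\delta(C_{t_1})|$ and $X\subseteq C_{t_1}$. Throughout I would keep an accompanying region of boundary at most $|F|$, obtained by applying the Uncrossing Lemma~\ref{SubmodularityCuts} when needed. By Lemma~\ref{CharacterizationBoundaryLinked}, $C_{t_j}$ is not boundary-linked to $\omega$. Item~(\ref{Algorithm3}) of Lemma~\ref{LemmaAlgorithm} then places $\omega$ in some $D\in\Sigma(C_{t_j})$ with $|\delta(D)|\le|\delta(Y)|$. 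By item~(\ref{Kurkofka1}) of Lemma~\ref{LemmaKurkofka}, $D$ lies inside some successor region $C_s\in\mathbf{S}_{t_j}$. Because $\omega$ lives in $C_{t_{j+1}}$ and the successor regions are pairwise disjoint, $s=t_{j+1}$. If $D=C_{t_{j+1}}$, then $|\delta(C_{t_{j+1}})|\le|\delta(Y)|<\kappa$, which contradicts the definition of $\kappa$ whenever $j+1\le n$.

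Otherwise $D\subsetneq C_{t_{j+1}}$, and I would invoke $(\dagger)$ with $D':=C_{t_{j+1}}$. This is legitimate because every cutting-family member is disjoint from or included in a successor region, by item~(\ref{Kurkofka1}) of Lemma~\ref{LemmaKurkofka} and disjointness. It gives $|\delta(D)|<|\delta(C_{t_{j+1}})|$, so the index $j+1$ also admits such a region, namely $D$. This contradicts the maximality of $j$, unless $j+1=n$. In that last case, $C_{t_n}$ would contain a region $D$ in which $\omega$ lives with $|\delta(D)|<|\delta(C_{t_n})|$, contradicting Lemma~\ref{CharacterizationBoundaryLinked}, since $C_{t_n}$ is boundary-linked to $\omega$.

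The main obstacle is making the bookkeeping of $\omega$ across levels consistent. The end $\omega$ chosen for $C_{t_n}$ must satisfy the hypothesis of item~(\ref{Algorithm3}) at level $j$, namely that it lives in no member of $\mathcal{E}$, which is empty for cutting families. I also have to ensure that the comparison in $(\dagger)$ applies to the cutting family $\Sigma(C_{t_j})$ and not to the auxiliary families $\Sigma(D,\mathcal{E}_D)$ introduced inside Lemma~\ref{LemmaKurkofka}. If the successor $C_{t_{j+1}}$ came from such an auxiliary family, I would instead use item~(\ref{Algorithm4}) of Lemma~\ref{LemmaAlgorithm} for that auxiliary family together with item~(\ref{Algorithm1}). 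I expect this case distinction, and checking that the strict inequality survives it, to be the delicate part of the argument.
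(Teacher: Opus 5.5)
There is a genuine gap in the case $D=C_{t_{j+1}}$. You choose $j$ with the level-dependent threshold $|\delta(C_{t_j})|$. So $j$ is just the last index below $n$ at which $C_{t_j}$ fails to be boundary-linked to $\omega$, and the witness $Y$ at that level only satisfies $|\delta(Y)|<|\delta(C_{t_j})|$. Yet to close the case $D=C_{t_{j+1}}$ you use $|\delta(Y)|<\kappa$, which you only have for $Y=X$ at level $1$.

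Moreover, item~(\ref{Algorithm3}) of Lemma~\ref{LemmaAlgorithm} is misquoted. It gives a region $C''\subseteq D$ with $|\delta(C'')|\le|\delta(Y)|$, not $|\delta(D)|\le|\delta(Y)|$. The maximal cutting-family member $D$ can have strictly larger boundary than $C''$: in Algorithm~\ref{ChoiceCi}, a later-chosen member strictly containing $C''$ has strictly larger boundary. Read correctly, the configuration $C''=D=C_{t_{j+1}}$, with $C_{t_{j+1}}$ boundary-linked to $\omega$ and $\kappa\le|\delta(C_{t_{j+1}})|\le|\delta(Y)|<|\delta(C_{t_j})|$, is compatible with everything you use. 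So no contradiction arises.

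Your ``accompanying region of boundary at most $|F|$'' is exactly the missing ingredient, and Lemma~\ref{SubmodularityCuts} cannot supply it. Uncrossing with the other successor regions only makes the region nested with them; it does not push it inside $C_{t_{j+1}}$, since it may still meet the bag $V_{t_j}$. And $C_{t_{j+1}}$ itself cannot be placed in the family $\mathcal{D}$, because $\omega$ lives in it.

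The repair is to keep the threshold fixed at $\kappa$, which is the paper's argument. Let $j\in\{1,\dots,n\}$ be maximal such that $C_{t_j}$ includes a region in which $\omega$ lives with boundary $<\kappa$. The index $j=1$ qualifies via $X$. The index $j=n$ is excluded, since $C_{t_n}$ is boundary-linked to $\omega$ and $\kappa\le|\delta(C_{t_n})|$ (Lemma~\ref{CharacterizationBoundaryLinked}). As $\kappa\le|\delta(C_{t_j})|$, item~(\ref{Algorithm3}) applies and yields $C''$ with $|\delta(C'')|<\kappa$ inside some $D\in\Sigma(C_{t_j})$. Item~(\ref{Kurkofka1}) of Lemma~\ref{LemmaKurkofka} and the disjointness of $\mathbf{S}_{t_j}$ force $D\subseteq C_{t_{j+1}}$. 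Hence $C''\subseteq C_{t_{j+1}}$, contradicting the maximality of $j$.

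The point is to propagate $C''$, not $D$. Then $(\dagger)$, the case split on $D=C_{t_{j+1}}$ and the uncrossing step all become unnecessary. Your concern about the auxiliary families $\Sigma(D,\mathcal{E}_D)$ is moot anyway. Item~(\ref{Algorithm4}) of Lemma~\ref{LemmaAlgorithm} holds for every region $D'\subseteq C_{t_j}$ that is suitably nested with $\Sigma(C_{t_j})$, regardless of how $D'$ arose.
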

\begin{proof}
    Let $t_1t_2\dots t_n$ be a finite path in $T$ with $t_1<t_2<\dots<t_n$. Since $C_{t_{i+1}}$ belongs to the successor family $\mathbf{S}_{t_i}$ for every $1\leq i<n$, each $C_i:=C_{t_i}$ is boundary-linked. Hence, let $\omega_i$ denote an undominated edge-end to which $C_i$ is boundary-linked.
Suppose for a contradiction that there is no family of $\kappa:=\min\{|\delta(C_{i})|: 1 \leq i \leq n\}$ many edge-disjoint paths in $G$ connecting the edges from $\delta(C_{1})$ to those of $\delta(C_{n})$.
Then, by Menger's theorem, there is a set of edges $F\subseteq E(G)$ of size $|F|<\kappa$ separating the disjoint subgraphs $C_n$ and $V(G)\setminus C_1$.
Assuming $F$ of minimum size for this separation property, we have $F\subseteq E(\overline{C_1})\setminus E(C_n)$ and $F = \delta(C)$ for the connected component $C$ of $G[C_1]-F$ including $C_n$.

Let $1\leq i \leq n$ be the maximum index for which $C_i$ includes a region $C'$ where $\omega_n$ lives but with $|\delta(C')|<\kappa$.
Since $C_n$ is boundary-linked to $\omega_n$ and $\kappa \leq |\delta(C_n)|$ by definition, $i<n$.
On the other hand, by item (\ref{Algorithm3}) of Lemma \ref{LemmaAlgorithm} and because $|\delta(C')|<\kappa \leq |\delta(C_i)|$, the edge-end $\omega_n$ also lives in some region $C''\subseteq C_i$ with $|\delta(C'')|\leq |\delta(C')|<\kappa$ that is included in some $D'\in \mathcal{E}_{t_i} = \Sigma(C_i)$.
Therefore, item (\ref{Kurkofka1}) from Lemma \ref{LemmaKurkofka} provides some $D\in \mathbf{S}_{t_i} = \mathbf{S}(C_i, \mathcal{E}_{t_i})$ including $D'$. However, $C_{i+1}$ also belongs to $\mathbf{S}_{t_i}$ by construction, and since both $C_{i+1}$ and $D$ contain tails of rays of $\omega_n$, they intersect.
As $\mathbf{S}_{t_i}$ is a disjoint family of regions within $C_i$, we must have $C_{i+1} = D \supseteq C''$. Then, recalling that $\omega_n$ lives in $C''$ with $|\delta(C'')|<\kappa$, we contradict the maximality of $i$.
\end{proof}

\bibliographystyle{plain}
\bibliography{references}

\appendix
\section{Counterexample regarding the strongly-linked property}\label{section:appendix}

Within finite graph theory, the existence of linked tree-cut decompositions attaining optimal width is the core of the mentioned work due to Giannopoulou, Kwon, Raymond and Thilikos in \cite{GIANNOPOULOU20211}. However, again reflecting the treatment due to Thomas in \cite{THOMAS199067} for finite tree-width, the corresponding index trees in their discussions are not required to be rooted and, hence, suggest an even more general statement to the linkedness condition. More precisely, we say that a tree-cut decomposition $(T,\mathcal{V})$ for a graph $G$ is \textbf{strongly-linked} if, for every pair of edges $e,e'\in E(T)$, there are $\min\{|F_{f}|:f\in E(eTe')\}$ edge-disjoint paths in $G$ connecting the edges from the adhesion set $F_e$ to those of $F_{e'}$, where $eTe'$ denotes the unique sizewise minimum path in $T$ containing $e$ and $e'$. Just as the paper \cite{LinkedTreeDecompositions} by Albrechtsen, Jacobs, Knappe and Pitz inspires Theorem \ref{Main}, their counterexamples in \cite{CounterexamplesLinkedTreeDecompositions} with regard to strongly-linked tree-decompositions suggest that our main result also cannot be stated while considering this ``unrooted version'' of linkedness.
Indeed, the aim of this appendix is to construct a locally finite graph as in the following result:

\begin{theorem}\label{CounterexampleStronglyLinkedness}
    There is a locally-finite graph $G$ such that no tree-cut decomposition displaying all its ends is strongly-linked.
\end{theorem}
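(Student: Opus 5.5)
The plan is to adapt the vertex-version counterexample of Albrechtsen, Jacobs, Knappe and Pitz to edge-cuts. We build a locally finite graph $G$ from a one-ended ``spine'' together with a second, ``thick'' end. A strongly-linked decomposition displaying all ends must then route a large edge-disjoint path system along the tree between two adhesion sets. Because the tree is unrooted-linked, such a system would have to cross through a small adhesion set that separates the two ends, and this is impossible.

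\emph{Construction.} Take a ray $R=r_0r_1r_2\dots$ and replace each edge $r_ir_{i+1}$ by $i+1$ parallel edges, each subdivided to keep $G$ simple. The end of $R$ then has infinite edge-degree and the edge-connectivity between $r_i$ and $r_j$ grows. Take a second ray $Q=q_0q_1\dots$ with single edges, and join the two by one edge $r_0q_0$. Then $G$ is locally finite with exactly two ends, $\omega_R$ and $\omega_Q$, both undominated.

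\emph{Forced structure.} Fix any tree-cut decomposition $(T,\mathcal{V})$ of finite adhesion displaying both ends; it need not be componental. Arguing as in Lemma~\ref{ConclusionsLocallyFinite}, we may pass to the essential part of $T$: a double ray $D$ whose two ends display $\omega_R$ and $\omega_Q$. Along the $\omega_R$-side of $D$, every adhesion set separating a tail of $R$ from $r_0$ must cut all parallel edges of some $r_ir_{i+1}$, or a larger set. Hence these adhesion sets have size tending to infinity. On the $\omega_Q$-side, infinitely many edges $f$ of $D$ have $|F_f|$ small; after a routine cleaning these sizes are $1$ or $2$.

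\emph{Contradiction.} Choose an edge $e$ of $D$ far out on the $R$-side with $|F_e|\ge 3$. If the $Q$-side adhesion sets do not all have size $\ge 3$, strongly-linkedness between $e$ and an edge $e'$ on the $Q$-side with $|F_{e'}|=1$ demands only one path, which exists. So the naive graph is too weak. The fix is to make the thick and thin ends interleave: the separation between them must use an adhesion set $F_f$ lying \emph{between} two adhesion sets $F_e,F_{e'}$ that both have size $\ge 2$, while $F_e$ and $F_{e'}$ are separated in $G$ by a single edge. Concretely, we attach $Q$ not at $r_0$ but through a bottleneck. Define $G$ as $R$ with its multiplicities, plus a single bridge edge $b$ from $r_0$ to a vertex $x$, and hang at $x$ a ladder-like one-ended graph $L$ of edge-degree $2$.

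Any displaying decomposition then has on its $L$-side adhesion sets of size $2$, coming from the rungs of the ladder, and on its $R$-side adhesion sets of size $\ge 2$. The adhesion set of the tree edge nearest $b$ either is $\{b\}$, or separates some vertex set so as to have size $\ge 2$. In the second case, we pick edges $e$ and $e'$ on the two sides; the minimum of $|F_f|$ along $eTe'$ is $\ge 2$, yet every $F_e$–$F_{e'}$ path crosses $b$, so the required system fails. In the first case, $b$'s tree edge lies on $D$; we then perturb the construction by adding a pendant finite gadget at $r_0$ that forces the decomposition to place $r_0$ and $x$ in different bags, separated by adhesion sets of size $\ge 2$.

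\emph{Main obstacle.} The main obstacle is making the case analysis exhaustive over \emph{all} displaying decompositions, since bags may be empty or non-connected. I would first prove a normalisation lemma: for any displaying decomposition of finite adhesion of a locally finite graph, the adhesion set on every tree edge separating the two ends contains an edge-cut of $G$ separating the two ends. This lets us choose $e,e'$ by pigeonhole on adhesion sizes, so that the obstruction reduces to $b$ being a bridge between regions of edge-connectivity $\ge 2$.
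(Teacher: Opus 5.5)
There is a genuine gap: the graph you build is not a counterexample. Your final graph (the thickened ray $R$, the bridge $b=r_0x$, and a ladder $L$ of edge-degree $2$ hung at $x$) has a strongly-linked tree-cut decomposition displaying both of its ends. Take $T$ to be a double ray $\dots s_{-2}s_{-1}s_0s_1\dots$. For $i\geq 0$, let $V_{s_i}$ consist of $r_i$ together with the subdivision vertices on the edges from $r_i$ to $r_{i+1}$. Let $V_{s_{-1}},V_{s_{-2}},\dots$ be the rungs of $L$, with $x\in V_{s_{-1}}$. The adhesion sets are then the $i+1$ edges entering $r_{i+1}$, the set $\{b\}$ at $s_{-1}s_0$, and pairs of rail edges of $L$.

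For two tree edges on the same side of $s_{-1}s_0$, the parallel edges (respectively the two rails) supply the required edge-disjoint paths. For two tree edges on opposite sides of $s_{-1}s_0$ (or one of them equal to it), the minimum along $eTe'$ is $1$, so one path suffices. So your ``first case'' is not a degenerate situation to perturb away; it is exactly the decomposition that succeeds. No pendant finite gadget at $r_0$ can rule it out, since one may put the gadget into the bag of $r_0$ and leave every adhesion set unchanged. Your second-case argument only shows that \emph{some} displaying decompositions fail to be strongly linked, whereas the theorem needs \emph{all} of them to fail. More generally, whenever two ends are joined through a single bottleneck, the decomposition can cut at a minimum bottleneck, after which every cross pair of adhesion sets needs only as many paths as the bottleneck provides.

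The missing idea is a counting argument that requires infinitely many ends. The paper hangs infinitely many one-ended gadgets $\Delta_n$ off the rail $R_u$ of a thick ladder $H$. It uses the boundary-linked regions $D_n$ and $D_n'$, each carrying $4n+2$ edge-disjoint rays into $\omega_n$ and $\omega$ respectively. Strong linkedness between adhesion sets far out towards $\omega_n$ and towards $\omega$ then forces a tree edge $f_n$ with $|F_{f_n}|=4n+1$ whose adhesion set separates the two fixed degree-one vertices $u_0$ and $v_0$. Different $n$ give different $f_n$, since the adhesion sizes differ. But every such edge must lie on the finite path $t_uTt_v$ between the nodes whose bags contain $u_0$ and $v_0$, so only finitely many $\omega_n$ can be displayed. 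A two-ended graph cannot produce such an infinite family of forced separators between two fixed vertices, so your approach has to be replaced rather than patched.
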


In order to introduce useful notation, the two remarks below first describe auxiliary multigraphs for building $G$ up:

\begin{remark}\label{constructionDelta}
    For a given pair of vertices $u,v$ and a natural number $n\in\mathbb{N}$, let $\Delta(u,v,n)$ be a multigraph whose vertex set is $\{u,v\}\sqcup \{x_k\}_{k\in\mathbb{N}}$ and which contains the following systems of parallel edges: there are $n$ edges connecting $u$ to $x_0$, as well as $n$ edges connecting $v$ to $x_0$ and, for each $k\in\mathbb{N}$, also $2n$ edges connecting $x_k$ to $x_{k+1}$. In particular, as also suggested by Figure \ref{fig:Delta3} below, $\Delta(u,v,n)$ has a unique edge-end.
\end{remark}

\begin{figure}[h]
    \centering
    \begin{tikzpicture}[scale=1.5]

  \def\nsteps{6}    
  \def\xoffset{2}   
  \def\s{1}         
  \def\nparUV{3}    
  \def\nparRay{6}

\node[circle,fill,inner sep=1.5pt,label=left:$x_0$] (x0) at (\xoffset,0) {};

  \node[circle,fill,inner sep=1.5pt,label=above:$u$] (u)
    at ($(x0)+(-\s/2,{sqrt(3)/2*\s})$) {};
  \node[circle,fill,inner sep=1.5pt,label=below:$v$] (v)
    at ($(x0)+(-\s/2,{-sqrt(3)/2*\s})$) {};

  \foreach \i in {1,...,\nparUV} {
    \pgfmathsetmacro{\bend}{6*(\i-(\nparUV+1)/2)}
    \draw[bend left=\bend] (u) to (x0);
  }

  \foreach \i in {1,...,\nparUV} {
    \pgfmathsetmacro{\bend}{6*(\i-(\nparUV+1)/2)}
    \draw[bend right=\bend] (v) to (x0);
  }

  \foreach \i in {1,...,\nsteps} {
    \node[circle,fill,inner sep=1.5pt,label=below:$x_{\i}$] (x\i)
      at ($(x0)+(\i*\s,0)$) {};
  }

  \foreach \i in {0,...,\numexpr\nsteps-1} {
    \pgfmathtruncatemacro{\k}{\i+1}
    \foreach \j in {1,...,\nparRay} {
      \pgfmathsetmacro{\bend}{10*(\j-(\nparRay+1)/2)} 
      \draw[bend left=\bend] (x\i) to (x\k);
    }
  }

  \draw[->,thick] (x\nsteps) -- ++(0.8,0);

\end{tikzpicture}
    \caption{Construction of $\Delta(u,v,3)$.}
    \label{fig:Delta3}
\end{figure}

\begin{remark}\label{constructionH}
    Let $H'$ be the following infinite ladder: it is obtained from two disjoint rays $R_v:=v_0v_1v_2\dots$ and $R_u:=u_0u_1u_2\dots$ after also adding the set of edges $\{u_nv_n\}_{n\geq 1}$. Then, for each $n\geq 1$, replace the edges $v_nv_{n+1}$ and $u_{n}u_{n+1}$ by $n+1$ parallel edges connecting the corresponding pairs of endpoints $\{v_n,v_{n+1}\}$ and $\{u_n,u_{n+1}\}$. Let $H$ denote the resulting multigraph as in Figure \ref{fig:constructionH}, whose unique edge-end $\varepsilon$ has then infinite edge-degree. 
\end{remark}

\begin{figure}[h]
\centering
\begin{tikzpicture}[scale=1.5, every node/.style={circle,fill,inner sep=1.5pt}]

  \def\nsteps{6}

  \foreach \i in {0,...,\nsteps} {
    \node[fill] (u\i) at (\i,1) {};
    \node[fill] (v\i) at (\i,0) {};
    \node[draw=none,fill=none] at (\i,1.2) {$u_{\i}$}; 
    \node[draw=none,fill=none] at (\i,-0.2) {$v_{\i}$}; 
  }

  \foreach \i in {0,...,\numexpr\nsteps-1} {
    \pgfmathtruncatemacro{\m}{\i+1}     
    \pgfmathtruncatemacro{\k}{\i+1}    
    \foreach \j in {1,...,\m} {
      \pgfmathsetmacro{\bend}{10*(\j-(\m+1)/2)}
      \draw[bend left=\bend] (u\i) to (u\k);
      \draw[bend left=\bend] (v\i) to (v\k);
    }
    \ifnum\i>0
      \draw (u\i) -- (v\i);
    \fi
  }

  \draw (u\nsteps) -- (v\nsteps);

  \draw[->,thick] (u\nsteps) -- ++(0.8,0);
  \draw[->,thick] (v\nsteps) -- ++(0.8,0);

\end{tikzpicture}

\caption{Construction of the multigraph $H$, where for each $n\in\mathbb{N}$ there are $n+1$ parallel edges between the pairs of vertices $\{v_n,v_{n+1}\}$ and $\{u_n,u_{n+1}\}$.}
\label{fig:constructionH}
\end{figure}

Considering the above description of $H$, let $G$ be the locally-finite multigraph defined by $G := H\cup \bigcup_{n\geq 1}\Delta(u_{2n-1},u_{2n},2n+1)$. Then, given $n\geq 1$, write $\omega_n$ for the unique (undominated edge-)end living in the subgraph $\Delta_n:=\Delta(u_{2n-1},u_{2n},2n+1)$. In addition, let $F_n\subseteq E(G)$ comprise the $2n+1$ parallel edges connecting the pair $\{u_{2n},u_{2n+1}\}$ together with the $2n$ edges of the form $\{u_iv_i: 1 \leq i \leq 2n\}$. Hence, $|F_{n}| = 4n+1$ and we can also alternatively write $F_{n} = \delta\left(\{u_0\}\cup\bigcup_{1 \leq k \leq n}\Delta_k\right)$, from where it follows that $C_n:= C_E(F_n,\omega_n)$ and $C_n':=C_E(F_n,\omega)$ are disjoint regions of $G\setminus F_n$. Indeed, we have $C_E(F_n,\omega_n) = \{u_0\}\cup \bigcup_{1\leq k \leq n}\Delta_k$ and $C_{E}(F_n,\omega) = V(G)\setminus C_E(F_n,\omega_n)$, because every endpoint in $V(G)\setminus C_E(F_n,\omega_n)$ of an edge from $F_n$ lies on a tail of $R_u$ or $R_v$ (as in Remark \ref{constructionH}) not intersecting $F_n$. Intuitively, the result below then checks the sizewise minimality of $F_n$ for separating $\omega_n$ from $\omega$ in $G$:

\begin{lemma}
    $D_n:=C_n\setminus \{u_0\}$ and $D_n':=C_n'\setminus \{v_0\}$ are regions boundary-linked to $\omega_n$ and $\omega$ respectively.
\end{lemma}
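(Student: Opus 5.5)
The plan is to verify both claims directly from the definition of boundary-linkedness by writing down explicit edge-disjoint ray families. The multiplicities in $H$ and in the $\Delta_k$ are chosen exactly so that greedy routing along $R_u$ or $R_v$ fits, so neither Lemma~\ref{CharacterizationBoundaryLinked} nor Menger's theorem should be needed.

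\textbf{Step 1: the boundaries.} Only the rungs $u_nv_n$ with $n\geq 1$ exist, and the edges $u_0u_1$ and $v_0v_1$ are single. Since $C_n=\{u_0\}\cup\bigcup_{k\leq n}\Delta_k$ and $u_0$ is attached only via $u_0u_1$, we get
\[\delta(D_n)=F_n\cup\{u_0u_1\},\qquad \delta(D_n')=F_n\cup\{v_0v_1\},\]
so both boundaries have size $4n+2$. Connectedness is immediate. In $D_n$, each $\Delta_k$ is connected and contains $u_{2k-1},u_{2k}$, and consecutive $\Delta_k$'s are joined by the parallel edges $u_{2k}u_{2k+1}$. $D_n'$ consists of the ladder tails from $u_{2n+1}$ and from $v_1$, which are joined by the rungs $u_jv_j$ with $j>2n$, together with the $\Delta_k$ for $k>n$ hanging off $u_{2k-1},u_{2k}$. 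Hence both are regions.

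\textbf{Step 2: $D_n$ is boundary-linked to $\omega_n$.} The ray $x_0x_1\dots$ of $\Delta_n$ carries $2(2n+1)=4n+2$ parallel edges per step. It therefore suffices to route the $4n+2$ boundary edges edge-disjointly into $x_0$ of $\Delta_n$ and then continue along distinct parallel copies forever; the resulting rays all lie in $\omega_n$. The $2n+1$ edges $u_{2n}u_{2n+1}$ enter $u_{2n}$ and go directly to $x_0$ through the $2n+1$ edges $u_{2n}x_0$.

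The remaining $2n+1$ boundary edges are $u_0u_1$ and the rungs $u_iv_i$ for $1\leq i\leq 2n$; they must reach $u_{2n-1}$ and use its $2n+1$ edges to $x_0$.
\begin{itemize}
\item For $i\leq 2n-1$, the path starting at the rung at $u_i$ (or at $u_0u_1$) walks along $R_u$ up to $u_{2n-1}$. The edge bundle $u_ju_{j+1}$ then carries exactly $j+1$ paths: the one from $u_0u_1$ and the rungs $1,\dots,j$. This matches its $j+1$ parallel copies.
\item The rung at $u_{2n}$ goes back to $u_{2n-1}$ through one of the otherwise unused copies of $u_{2n-1}u_{2n}$.
\end{itemize}
This yields exactly $2n+1$ paths arriving at $u_{2n-1}$.

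\textbf{Step 3: $D_n'$ is boundary-linked to $\omega$.} Here $\omega$ is the edge-end $\varepsilon$ of the ladder. The $2n+1$ edges of $u_{2n}u_{2n+1}$ continue as rays along the tail of $R_u$ from $u_{2n+1}$; the bundle $u_ju_{j+1}$ with $j\geq 2n+1$ has $j+1\geq 2n+2$ copies, so disjoint copies can be chosen at each step. Symmetrically, $v_0v_1$ and the rungs at $v_1,\dots,v_{2n}$ run along $R_v$. By step $j\leq 2n$ they form $j+1$ paths in a bundle of $j+1$ copies, and beyond that there are always $2n+1$ paths in a bundle of more than $2n+1$ copies. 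All these rays avoid the $\Delta_k$, lie in $\overline{D_n'}$, and are tails of $R_u$ or $R_v$, hence belong to $\omega$.

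The only delicate point is the capacity bookkeeping in Step 2 around $u_{2n-1}$ and $u_{2n}$: one must make sure the rung at $u_{2n}$ does not compete with the $u_{2n}x_0$ edges already used by the $F_n$-bundle. Sending it back to $u_{2n-1}$ resolves this.
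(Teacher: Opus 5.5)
Your proof is correct, and it takes a different route from the paper.

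The paper verifies boundary-linkedness indirectly, through the cut condition of Lemma~\ref{CharacterizationBoundaryLinked}. It shows that every region $D\subseteq D_n$ (resp.\ $D\subseteq D_n'$) in which $\omega_n$ (resp.\ $\omega$) lives has $|\delta(D)|\geq 4n+2$. This is done by a case analysis on which vertices $u_k$ (resp.\ the first $u_k$ and $v_k$) lie in $D$. That lemma then produces the rays, and its nontrivial direction rests on Menger's theorem and Lemma~\ref{LinkedDisplay}.

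You instead construct the witnessing ray families directly by greedy routing, using that the multiplicities $j+1$ on $u_ju_{j+1}$ and $v_jv_{j+1}$, and $2n+1$ and $4n+2$ in $\Delta_n$, are exactly tight. Your capacity bookkeeping checks out:
\begin{itemize}
\item For $1\le j\le 2n-2$, the bundle $u_ju_{j+1}$ carries exactly $j+1$ paths.
\item The rung path at $u_{2n}$ detours through a spare copy of $u_{2n-1}u_{2n}$, so exactly $2n+1$ paths leave $u_{2n-1}$ along its $2n+1$ edges to $x_0$.
\item Together with the $2n+1$ paths through $u_{2n}x_0$, this fills the $4n+2$ parallel edges on the $\Delta_n$-ray.
\item On the $v$-side, the bundle $v_jv_{j+1}$ carries exactly $j+1$ paths for $j\le 2n$ and has slack afterwards.
\end{itemize}
Each constructed path is a genuine ray, with no repeated vertices, and uses exactly one boundary edge.

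What your approach buys: it is self-contained and avoids the fairly delicate enumeration over arbitrary subregions $D$. It also hands you the explicit families $\mathcal{R}_n,\mathcal{R}_n'$ that Lemma~\ref{nthSeparator} later uses, including the rays starting at $u_0u_1$ and $v_0v_1$. You also state $\delta(D_n')=F_n\cup\{v_0v_1\}$ correctly.

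What the paper's approach buys: it makes the min-cut statement explicit, namely that no region around the end inside $D_n$ or $D_n'$ has smaller boundary. This is the ``sizewise minimality of $F_n$'' the authors want to highlight. However, it also follows from your families by the easy direction of Lemma~\ref{CharacterizationBoundaryLinked}.
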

\begin{proof}
    Since both vertices $u_0$ and $v_0$ have degree $1$ in $G$, it clearly follows that $D_n$ and $D_n'$ as above indeed induce connected subgraphs of $G$ having finite boundary. In fact, $|\delta(D_n)| = |\delta(D_n')| = 4n+2$ since $ \delta(D_n)\setminus \{u_0u_1\} = F_n= \delta(D_n')\setminus \{v_0v_1\}$ and $|F_n| = 4n+1$. In its turn, fix the sequence $\{x_i^n\}_{i\in\mathbb{N}} := \Delta_n\setminus \{u_{2n},u_{2n-1}\}$ as in the construction of $\Delta_n$ through Remark \ref{constructionDelta}. Now, let $D\subseteq D_n$ be a region in which $\omega_n$ lives. If $D\subseteq \Delta_n$, then we can write $D=\{x_i\}_{i\geq i_0}$ for some $i_0\in\mathbb{N}$ and conclude that $|\delta(D)| = 4n+2$: after all, $\delta(D) = E(x_{0},u_{2n-1})\cup E(x_0,u_{2n})$ if $i_0=0$ and $\delta(D)=E(x_{i_0-1},x_{i_0})$ otherwise. On the other hand, if $D\not\subseteq \Delta_k$, the indexes $k_{\mathrm{min}}:=\min\{1\leq k \leq 2n : u_k\in D\}$ and $k_{\mathrm{max}}:=\max\{1 \leq k \leq 2n: u_k\in D \}$ are well-defined. In particular, due to the connectedness of $D$, we have $u_k \in D$ for every $k_{\mathrm{min}}\leq k \leq k_{\mathrm{max}}$. By the same argument and because $\omega_n$ lives in $D_n$, it also follows that $2n-1\leq k_{\mathrm{max}}\leq 2n$. If $k_{\mathrm{max}} = 2n-1$, then we have $E(u_{2n-1},u_{2n})\cup E(x_0,u_{2n})\subseteq D$ again by construction of $\Delta_n$ as in Remark \ref{constructionDelta} and, in particular, 
    \begin{align*}
        |\delta(D)| & \geq |E(u_{2n-1},u_{2n})|+|E(u_{2n-1},x_0)|+ |\{u_kv_k:k_{\mathrm{min}}\leq k\leq k_{\mathrm{max}}\}|  \\
        & = 2n + (2n+1)+(k_{\max}-k_{\min}+1) \\
        & \geq 4n+2.
    \end{align*}
    If $k_{\max} = 2n$, we similarly have $E(u_{2n},u_{2n+1})\subseteq \delta(D)$ because $D\subseteq D_n$, from where the inequality below now holds:
    \begin{align*}
        |\delta(D)| & \geq |E(u_{2n},u_{2n+1})| +  |\{u_kv_k:k_{\mathrm{min}}\leq k\leq k_{\mathrm{max}}\}| + |E(u_{k_{\min}-1},u_{k_{\min}})|  \\
        & = (2n + 1) +(k_{\max}-k_{\min}+1) + k_{\min} \\
        & = 4n+2.
    \end{align*}
    In any case, $D_n$ is boundary-linked to $\omega_n$ by Lemma \ref{CharacterizationBoundaryLinked}, since we just showed that $|\delta(D)|\geq 4n+2 = |\delta(D_n)|$ for every region $D\subseteq D_n$ in which $\omega_n$ lives.

    On the other hand, suppose now that $D\subseteq D_n'$ is a region in which $\omega$ lives. Once both rays $R_v=v_0v_1v_2\dots$ and $R_u=u_0u_1u_2\dots$ belong to $\omega$ by construction as in Remark \ref{constructionH}, now the indexes $k_1:=\min\{k\in\mathbb{N}:v_k \in D\}$ and $k_2:=\min\{k\in\mathbb{N}: u_k\in D\}$ are well-defined. Incidentally, we have $k_1 \geq 2n+1$ because $\{u_0,u_1,u_2,\dots,u_{2n}\}\subseteq C_n$, as well as $k_2\geq 1$ since $v_0\notin D_n'\supseteq D$. If $k_2\geq 2n+1$ too, then it even follows that $|\delta(D)|\geq |E(v_{k_1-1},v_{k_1})| +  |E(u_{k_2-1},u_{k_2})| = k_1+k_2 \geq (2n+1)+(2n+1) = 4n+2$. If $k_2<2n+1\leq k_1$, on the other hand, then 
    \begin{align*}
        |\delta(D)| & \geq |E(v_{k_1-1},v_{k_1})| +  |\{u_kv_k:k_{2}\leq k< k_{1}\}| + |E(u_{k_{2}-1},u_{k_{2}})|  \\
        & = k_1 +(k_{1}-k_{2}) + k_{2} \\
        & = 2k_1\\
        & \geq 4n+2.
    \end{align*}
    Therefore, as in the previous analysis, we just concluded the inequality $|\delta(D)|\geq 4n+2 = |\delta(D_n')|$ for every region $D\subseteq D_n'$ in which $\omega$ lives. Equivalently, according to Lemma \ref{CharacterizationBoundaryLinked}, $D_n'$ is boundary-linked to $\omega$.
\end{proof}

Then, considering $D_n$ and $D_n'$ as in the above result, let $\mathcal{R}_n$ and $\mathcal{R}_n'$ be two families of $4n+2$ edge-disjoint rays in $\overline{D_n}$ and $\overline{D_n'}$, respectively, starting at the edges from $\delta(D_n) = \delta(C_n)\cup \{u_0u_1\} = F_n\cup \{u_0u_1\}$ and $\delta(D_n') = \delta(C_n')\cup \{u_0u_1\} = F_n\cup \{u_0u_1\}$. If $G$ admits a strongly-linked tree-cut decomposition displaying $\omega$ and $\omega_n$, these ray systems are useful for witnessing suitable adhesion sets:

\begin{lemma}\label{nthSeparator}
    If $G$ admits a linked tree-cut decomposition of finite adhesion $(T,\mathcal{V})$ displaying $\omega$ and $\omega_n$, then there is $f\in E(T)$ whose corresponding adhesion set $F_f$ has size $4n+1$ and separates $u_0$ from $v_0$.
\end{lemma}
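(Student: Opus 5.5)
The plan is to combine a lower bound on edge-cuts between $\omega$ and $\omega_n$, obtained from the ray systems $\mathcal{R}_n$ and $\mathcal{R}_n'$, with an upper bound from the linkedness of $(T,\mathcal{V})$ along a path of $T$ between the rays displaying $\omega_n$ and $\omega$. I will use the linkedness assumption in the strongly-linked (unrooted) sense of this appendix. The two displaying rays may diverge at a node that is not comparable in the right way, so the rooted version alone might not give paths between the two branches.

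\emph{Step 1 (lower bound).} Delete from $\mathcal{R}_n$ the ray starting at $u_0u_1$, and from $\mathcal{R}_n'$ the ray starting at $v_0v_1$. Every edge of $F_n$ then starts exactly one remaining ray on each side. The rays of $\mathcal{R}_n$ live in $\overline{D_n}$ and those of $\mathcal{R}_n'$ in $\overline{D_n'}$, and these only share edges of $F_n$. So we can glue the two families along $F_n$ into $4n+1$ edge-disjoint double rays, each with one tail in $\omega_n$ and one in $\omega$. Two further edge-disjoint objects remain: the leftover ray $Q_u\in\mathcal{R}_n$ from $u_0$ to $\omega_n$ and the leftover ray $Q_v\in\mathcal{R}_n'$ from $v_0$ to $\omega$. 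Any finite edge set separating $\omega$ from $\omega_n$ must meet every double ray, so it has size at least $4n+1$.

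\emph{Step 2 (upper bound via linkedness).} Let $R_\omega$ and $R_{\omega_n}$ be the rays of $T$ displaying the two edge-ends, and let $t^*$ be their last common node. I will pick an edge $e$ on $R_{\omega_n}$ and an edge $e'$ on $R_\omega$, both beyond $t^*$ and so deep that the sides of $F_e$ and $F_{e'}$ facing $\omega_n$ and $\omega$ contain no endpoint of $F_n$. This is possible because $F_n$ is finite and the sets $C_t$ along a displaying ray have empty intersection. Then $F_n$ separates $F_e$ from $F_{e'}$, so there are at most $4n+1$ edge-disjoint paths between them. If every $f$ on the path $eTe'$ had $|F_f|\geq 4n+2$, strong linkedness would give $4n+2$ such paths, a contradiction. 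Hence some $f\in E(eTe')$ has $|F_f|\leq 4n+1$. Every edge of $eTe'$ separates the tail of $R_{\omega_n}$ from the tail of $R_\omega$ in $T$, so $F_f$ separates $\omega_n$ from $\omega$ in $G$. By Step 1, $|F_f|=4n+1$.

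\emph{Step 3 (separating $u_0$ from $v_0$).} The $4n+1$ double rays each use at least one edge of $F_f$, so they use all of $F_f$, one edge each. Suppose $u_0$ lay on the $\omega$-side of $F_f$. Then $Q_u$, which runs from $u_0$ to $\omega_n$, would have to cross $F_f$ in an edge not used by any double ray. That gives $4n+2$ distinct edges in $F_f$, which is impossible. Symmetrically, if $v_0$ lay on the $\omega_n$-side, $Q_v$ would cross $F_f$ in an extra edge. So $u_0$ and $v_0$ lie on the $\omega_n$- and $\omega$-sides respectively, and $F_f$ separates them.

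I expect Step 2 to be the main obstacle: choosing $e$ and $e'$ so deep that $F_n$ genuinely separates the two adhesion sets, and checking that the chosen $f$ really separates the two edge-ends in $G$. Steps 1 and 3 are straightforward edge counting.
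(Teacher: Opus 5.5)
Your argument follows the paper's route. Your $4n+1$ glued double rays play the role of the paper's path system $\mathcal{P}=\{P_eP_e'\colon e\in F_n\}$. Strong linkedness along the tree path gives an $f$ with $|F_f|\le 4n+1$, counting forces $|F_f|=4n+1$ with every edge of $F_f$ on exactly one glued connection, and the leftover rays starting at $u_0u_1$ and $v_0v_1$ put $u_0$ and $v_0$ on opposite sides of $F_f$. You are also right that ``linked'' must be read as strongly-linked, as the paper's proof does. Using double rays instead of finite paths between $\delta(K_i)$ and $\delta(K_i')$ is a harmless simplification: it gives $|F|\ge 4n+1$ for every finite $F$ separating $\omega$ from $\omega_n$ at once.

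There is one unjustified inference, exactly where you expected trouble. In Step 2, knowing that $C_t$ (the side of $F_e$ towards $\omega_n$) contains no endpoint of $F_n$ only tells you that every edge of $F_e=\delta(C_t)$ lies in $E(C_n)\cup E(C_n')$. To conclude that $F_n$ separates $F_e$ from $F_{e'}$ you need $C_t\subseteq C_n$ and $C_{t'}\subseteq C_n'$. The lemma does not assume the decomposition is componental, so $C_t$ may be disconnected and have a piece inside $C_n'$. That piece's boundary edges lie in $F_e$ and can be joined to $F_{e'}$ without crossing $F_n$, so your upper bound of $4n+1$ paths fails as argued.

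There are two ways to repair this step.
\begin{itemize}
\item If you assume componentality (as for the decompositions of Theorem~\ref{Main}), the fix is immediate: $C_t$ is connected, avoids the endpoints of $F_n$, and contains tails of rays of $\omega_n$, which lives in $C_n$.
\item In general, use the display property. Put $A_t:=C_t\cap C_n'$; then $\delta(A_t)\subseteq\delta(C_t)$ is finite. If $A_t\neq\emptyset$ for all deep $t$ on $R_{\omega_n}$, these sets decrease with empty intersection, so each is infinite. A star--comb argument in the locally finite graph $G$, as in the paper's proof that its decomposition displays the undominated edge-ends, then gives an end $\varepsilon$ living in every $A_t$. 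Hence $\varphi(\varepsilon)=\varphi(\omega_n)$, although $\varepsilon$ lives in $C_n'$ and so $\varepsilon\neq\omega_n$. This contradicts injectivity of $\varphi$ on displayed ends. The symmetric argument gives $C_{t'}\subseteq C_n'$ along $R_\omega$.
\end{itemize}
With this added, the rest of your proof goes through. The paper's proof asserts $|F_f|\le|\mathcal{P}|$ without comment, so it relies on the same separation implicitly.
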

\begin{proof}
    In fact, let $t_0$ denote a root for $T$ and consider the corresponding tree-order $\leq$. Since $(T,\mathcal{V})$ displays $\omega_n$, there must exist a ray $R:=t_0t_1t_2\dots$ in $T$ such that $\omega_n$ lives in $C_{t_k}:=\bigcup_{s\geq t_k}V_{s}$ for every $k\in\mathbb{N}$. Similarly, there is a ray $R':=t_0't_1't_2'\dots$ starting at $t_0' = t_0$ and whose subgraphs of the form $C_{t_i'}:=\bigcup_{s\geq t_i'}V_s$, $i\in\mathbb{N}$, contain representatives of $\omega$. Because $F_n$ is finite, we can also choose large enough $i\in\mathbb{N}$ so that neither $E(C_{t_i})$ nor $E(C_{t_{i}'})$ intersects $F_n$. Hence, the connected component $K_i$ of $C_{t_i}$ in which $\omega_n$ lives must be included in $C_n$, as well as $K_i'\subseteq C_n'$ for the connected component $K_i'$ of $C_{t_i'}$ where $\omega$ has its representatives.

    In particular, since $\mathcal{R}_n\subseteq \omega_n$, each ray $R_e\in \mathcal{R}_n$ starting at a given edge $e\in F_n\subseteq \delta(C_n)\subseteq \delta(D_n)$ includes a path $P_e$ in $\overline{C_n}$ containing $e$ and intersecting $\delta(K_i)\subseteq \delta(C_{t_i})$. Analogously, because $\mathcal{R}_n'\subseteq \omega$, from each ray $R_e'\in \mathcal{R}_n'$ starting at $e$ we can extract a finite path $P_e'$ in $\overline{C_n'}$ containing $e$ and some edge of $\delta(K_i')\subseteq \delta(C_{t_i}')$. Observing that $P_e$ and $P_e'$ meet precisely at $e$, the family of edge-disjoint paths $\mathcal{P}:=\{P_eP_e': e \in F_n\}$ connects the adhesion sets of $(T,\mathcal{V})$ described by $\delta(C_{t_i})$ and $\delta(C_{t_i'})$.

    Therefore, when assuming $(T,\mathcal{V})$ to be strongly-linked, the unique path $t_iTt_i'$ in $T$ having $t_i$ and $t_i'$ as endpoints traverses an edge $f\in E(T)$ whose adhesion set $F_{f}$ has size at most $|\mathcal{P}| = |F_n| = 4n+1$. However, since $f\in E(t_iTt_i')$, the subgraphs $K_i$ and $K_i'$ are now separated also by $F_f$. Hence, due to each $P\in \mathcal{P}$ intersecting both $\delta(K_i)$ and $\delta(K_i')$ by construction, every edge from $F_f$ must then belong to precisely one path of this edge-disjoint family $\mathcal{P}$. In other words, $|F_f| = |\mathcal{P}| = 4n+1$.

    Finally, note that there are still unique rays $Q_n \in \mathcal{R}_n\setminus \{R_e: e\in F_n\}$ and $Q_n'\in \mathcal{R}_n'\setminus \{R_e': e\in F_n\}$, which are those from $\mathcal{R}_n$ and $\mathcal{R}_n'$ that start at the edges $u_0u_1$ and $v_0v_1$ respectively. In particular, as $E(P_e)\subseteq E(R_e)$ and $E(P_e')\subseteq E(R_e')$ for every $e\in F_n$, neither $Q_n$ nor $Q_n'$ intersect $F_f\subseteq \bigcup_{e\in F_n}E(P_e)\cup E(P_e')$. Therefore, since $Q_n\in \omega_n$ and $Q_n'\in \omega$ by the choices of $\mathcal{R}_n$ and $\mathcal{R}_n'$, we have $u_0\in C_E(F_f,\omega_n)$ and $v_0\in C_E(F_f,\omega)$. Similarly, $K_i\subseteq C_E(F_f, \omega_n)$ and $K_i'\subseteq C_E(F_f, \omega)$ because $\omega_n$ and $\omega$ live in $K_i$ and $K_i'$ respectively. Nevertheless, $K_i$ and $K_i'$ are subgraphs of $G$ contained in different connected components of $G\setminus F_f$, as argued in the previous paragraph. In other words, one concludes that $F_f$ separates $u_0$ and $v_0$, since $C_E(F_f,\omega_n)\neq C_E(F_f,\omega)$.
\end{proof}

Finishing this section, the above Lemma \ref{nthSeparator} reaches a direct proof of Theorem \ref{CounterexampleStronglyLinkedness}: if $(T,\mathcal{V})$ is a tree-cut decomposition of finite adhesion displaying $\omega$, then it can display $\omega_n$ only for finitely many values of $n\in\mathbb{N}$. After all, if $t_u,t_v\in T$ are the nodes of $T$ such that $u_0\in V_{t_u}$ and $v_0\in V_{t_v}$, then the adhesion sets of $(T,\mathcal{V})$ separating $u_0$ from $v_0$ are precisely those corresponding to the edges of the unique (finite) path $t_uTt_v$ in $T$ having $t_u$ and $t_v$ as endpoints.

\vspace{2.5cm}

\noindent
\textsc{Universität Hamburg, Department of Mathematics, Bundesstrasse 55 (Geomatikum), 20146 Hamburg, Germany}

\medskip

\noindent
\emph{Email address:}
\texttt{\{max.pitz,lucas.real\}@uni-hamburg.de}

\end{document}